%% file: arxivDRAFT.tex
\documentclass{article}
\usepackage[utf8]{inputenc}
\usepackage{geometry}

\input{macros}

\title{Limit theorems for anisotropic functionals of stationary Gaussian fields with Gneiting covariance function}
\author{N.~Leonenko, L.~Maini, I.~Nourdin and F.~Pistolato\footnote{Corresponding author.\\MSC2020 subject classifications: Primary 60F05; secondary 60G15, 60G60, 60J55.\\Keywords and phrases: stationary Gaussian fields, Gneiting covariance function, Rosenblatt distribution, spatiotemporal additive functionals.}}
\date{March 6, 2026}

\begin{document}

\maketitle

\begin{abstract}
We study non-linear additive functionals of stationary Gaussian fields over anisotropically growing domains in $\mathbb{R}^d$, including spatiotemporal settings, and establish Gaussian and non-Gaussian limit theorems under non-separable covariance structures.
We characterize the regimes in which the normalized functionals converge either to a Gaussian distribution or to a $2$-domain Rosenblatt distribution, depending on precise long-range dependence conditions.
Our analysis covers covariance functions from the Gneiting class, which provides a canonical family of non-separable spatiotemporal models. A key structural result shows that such covariances are asymptotically separable in a precise cumulant sense, allowing us to identify explicitly the limiting distributions without imposing additional spectral assumptions.
These results extend existing spatiotemporal limit theorems beyond separable and short-memory frameworks and provide a unified description of anisotropic long-range dependence phenomena.
\end{abstract}

\section{Introduction}

In recent years, increasing attention has been devoted to spatiotemporal Gaussian models, motivated by applications in epidemiology, environmental sciences, economics, and social sciences; see \cite{systematic} for a recent review of spatiotemporal random models over the past 5 years. From a probabilistic perspective, such models correspond to Gaussian fields indexed by product spaces $\R^{d_1}\times\R^{d_2}$, where $x_1\in\R^{d_1}$ typically represents a spatial variable and $x_2\in\R^{d_2}$ a temporal one. While the classical case corresponds to $d_1=2,3$ and $d_2=1$, higher-dimensional index sets naturally arise when modeling more complex dependence structures. Related work in this direction includes \cite{porcu1motivation}, which studies multivariate Gaussian random fields on generalized product spaces.

In this paper, we investigate nonlinear additive functionals of stationary Gaussian fields under \emph{anisotropic growth of the observation domain}. More precisely, let $B=(B_x)_{x\in\R^d}$, $d\ge2$, be a continuous, stationary, centered Gaussian field with unit variance and covariance function
\begin{equation}
\label{eq:cov}
    C(x) := \Cov(B_x,B_0), 
    \qquad x\in\R^d.
\end{equation}
Let $\varphi:\R\to\R$ be a non-constant function such that $\E[\varphi(B_0)^2]<\infty$. Then, $\varphi$ admits the Hermite expansion
\begin{equation}
\label{eq: deco phi}
    \varphi \eqLtwo \sum_{q\in\N} a_q H_q, \qquad a_q := \frac1{q!}\E[H_q(N)\varphi(N)],
\qquad N\sim\mathcal N(0,1),
\end{equation}
where $H_q$ denotes the $q$-th Hermite polynomial.
The \emph{Hermite rank} of $\varphi$ is the smallest $q\ge1$ such that $a_q\neq0$.

Let $d_1,d_2\ge1$ with $d_1+d_2=d$, and let $D_i\subset\R^{d_i}$ $i=1,2$, be convex bodies, that is, convex compact sets with non-empty interior. We consider the additive functional
\begin{equation}
\label{eq:Yt}
    Y(t) 
    = \int_{t_1(t)D_1 \times t_2(t)D_2} 
      \varphi(B_x)\, dx,
    \qquad t\ge0,
\end{equation}
where $t_i(t) \to \infty$ as $t\to\infty$, for $i=1,2$, and \begin{equation}
\label{domain}
    t_1(t)D_1 \times t_2(t)D_2 : = \{(t_1(t)x_1,t_2(t)x_2)\in\R^{d_1}\times \R^{d_2}: (x_1,x_2)\in D_1\times D_2\}.
\end{equation}
The functional \eqref{eq:Yt} is well-defined since $B$ is continuous and $\varphi$ is square-integrable. Provided that $\Var(Y(t))> 0$ for $t$ large enough, we study the limit in distribution as $t\to\infty$ of \begin{equation}
\label{eq:Yttilde}
    \widetilde Y(t) : = \frac{Y(t) - \E[Y(t)]}{\sqrt{\Var(Y(t))}}.
\end{equation}

When $t_1\equiv t_2$, the problem falls within the classical theory of nonlinear functionals of Gaussian fields. In the discrete setting ($\Z^d$), central and non-central limit theorems were established in the seminal works \cite{BM83,DM79,T79}. Malliavin-calculus-based approaches are presented in \cite[Chapter~7]{bluebook}, and further extensions in the continuous framework appear in \cite{Har02continuousFuncBM,CNN20tightnessNonStat,MN22,Maini2024}. However, these results do not address \emph{anisotropic growth}, namely the case where different components of the domain diverge at different rates.

Anisotropic regimes have recently been investigated in \cite{RST,PR16,pdom24}. In particular, \cite{pdom24} provides a characterization of the limiting distribution of \eqref{eq:Yttilde} under the crucial assumption that the covariance is \emph{separable}, i.e.
\begin{equation}
\label{eq:separable}
    C(x_1,x_2)
    = C_1(x_1)C_2(x_2), \qquad \forall x=(x_1,x_2)\in \R^{d_1}\times \R^{d_2}.
\end{equation}
In the same work, it is shown that models with non-separable covariance structures may exhibit significantly different limiting behavior, and different distributional limits can be obtained only changing the rate at which $t_1$, and $t_2$ diverge.
While separable models are mathematically convenient and widely used in applications (see, e.g., \cite{DDK25testingseparability}), they impose strong structural restrictions and fail to capture more intricate space–time interactions.

\subsection*{Main contribution}
The purpose of the present work is to investigate the asymptotic behavior of \eqref{eq:Yttilde} in the \emph{fully non-separable} setting where the covariance function belongs to the Gneiting class \cite{Gne02}, namely
\begin{equation*}
C(x_1,x_2)
=
C_2(x_2)\,
C_1\!\left(x_1\, C_2(x_2)^{1/d_1}\right),
\end{equation*}
under suitable regularity assumptions on $C_1$ and $C_2$ ensuring positive definiteness (see \Cref{ass:gneiting}). Our contribution is threefold.
\begin{itemize}
    \item[(i)] We establish Gaussian and non-Gaussian limit theorems for anisotropic nonlinear functionals under fully non-separable Gneiting-type covariances.
    \item[(ii)] We prove that Gneiting covariances are \emph{asymptotically separable} in a precise sense.
    \item[(iii)] We identify explicitly when $2$-domain Rosenblatt limits arise, without imposing additional spectral assumptions.
\end{itemize}

Although they are fully non-separable, we prove that Gneiting-type covariances become asymptotically separable in the sense of \cref{def:asympsep}. Therefore, the limiting behavior coincides with that of an appropriate separable model at large scales. This phenomenon provides an explanation for the emergence of Rosenblatt-type limits in anisotropic regimes.

We emphasize that, although the Gneiting construction imposes structural constraints, it remains one of the very few general and constructive criteria for building valid non-separable space-time covariance functions on $\R^{d_1}\times\R^{d_2}$. See also \cite{MR1731494,MR3573273,MR4214159,MR2156840}. From this perspective, the Gneiting class provides a canonical framework for investigating genuinely non-separable phenomena. To the best of our knowledge, this is the first systematic study of nonlinear space–time functionals in a fully non-separable setting, on Euclidean product spaces. 

In the non-central regime, we focus on functionals of Hermite rank $R=2$. This corresponds to the first genuinely non-Gaussian case and already encompasses a large class of relevant statistical and geometric functionals, including quadratic variations, power variations, and Lipschitz–Killing curvatures of random fields. 

Finally, our method applies more generally to Gaussian fields with asymptotically separable covariance functions (see Section~\ref{sec:asympsep}) and is not restricted to the classical spatiotemporal case $d_2=1$, but holds for arbitrary $d_1,d_2\ge1$.

\subsection*{Related results} 

During the final stage of this work, we became aware of the preprint \cite{ruizmedina2026noncentrallimitresultsspatiotemporal}, where non-central limit theorems for spatiotemporal functionals of Gaussian fields on $\mathbb{R} \times \mathbb{R}^d$ are established under the assumption that the covariance is asymptotically separable and satisfies additional spectral conditions. In contrast, our starting point is a fully non-separable covariance structure. We prove that asymptotic separability emerges for Gneiting type covariances, and we identify explicitly the $2$-domain Rosenblatt limits, without imposing further spectral assumptions. 

We also compare our results with \cite{LRM23,LRM25}, where the authors study spatiotemporal functionals in the case $d_2=1$ and $D_2=[0,1]$, assuming $t_2(t)=t$. They establish reduction principles and derive central limit theorems after suitable normalization. However, their results are restricted to Hermite rank $R=1$, and do not address higher-order chaotic components nor the fully non-separable regime considered here. Moreover, their conclusions apply only partially to the Gneiting class, essentially when the spatial scaling satisfies $t_1(t)=t^\gamma$ for suitable $\gamma>0$. Our results cover arbitrary anisotropic growth rates and provide a complete characterization of Gaussian and non-Gaussian limits in the case $R=2$.

Related investigations of sphere-cross-time Gaussian fields, where the spatial variable belongs to $S^2$ and time evolves continuously, have been recently conducted in \cite{smallFluctSphereTime,MRV21MotivNonUni,fluctLevelSphereTime,caponera2025fractionalcointegrationgeometricfunctionals}. In \cite{MRV21MotivNonUni,fluctLevelSphereTime}, the authors analyze long-time fluctuations of geometric functionals such as excursion areas and level curve lengths for isotropic-in-space, stationary-in-time Gaussian fields on $S^2\times\R^+$. Depending on the temporal memory and on the dominant Wiener chaos component, both Gaussian and non-Gaussian (including Rosenblatt-type) limits arise. In contrast to our setting, these results concern long-time asymptotics over a fixed spatial manifold ($S^2$), but address general fully non-separable covariance structures of spatiotemporal fields on the sphere. In \cite{caponera2025fractionalcointegrationgeometricfunctionals}, the authors show that the aforementioned functionals exhibit fractional cointegration, with applications to statistical estimation (see also \cite{MR4244190}). Sojourn functionals of time-dependent random fields have been investigated outside of the Gaussian framework, too, see e.g. \cite{CRRm25}. 

\subsection*{Some notation} 

In the following, we write $f(t) \asymp g(t)$ as $t\to\infty$ to denote that there exist constants $A, B > 0$ such that $A\, g(t) \le f(t) \le B\, g(t)$ for $t$ large enough. If only the right-hand side inequality is valid, we write $f(t) \lesssim g(t)$. We also write $f(t) \sim g(t)$ as $t\to\infty$ if $\lim_{t\to\infty} f(t)g(t)^{-1} = 1$. Given a probability space $(\Omega,\mathcal F,\mathbb P)$, and a random variable $X$ such that $\Var (X) >0$, we write $\widetilde {X}$ to denote its standardization: $\widetilde {X} = \frac{X-\E[X]}{\sqrt{\Var X}}$.

\subsection{Setting and main results}

In the framework of Equations \eqref{eq:cov}-\eqref{eq:Yttilde} presented above, we formulate our results under the following assumptions. 

\begin{assumption}[Gneiting type]
\label{ass:gneiting}
We assume that $C$ belongs to the Gneiting class\footnote{In \cite{Gne02}, the author proved that the previous assumptions on $C_1$ and $C_2$ ensure positive-definiteness of $C$. We remark that the above result has been recently generalised in \cite{extending_gneiting}.}, namely
\begin{equation}
\label{eq:Cgneiting} 
C(x) = C_2(x_2) \, C_1 \left( x_1\, C_2(x_2)^{1/d_1} \right), \qquad \forall x = (x_1,x_2)\in \R^{d_1}\times \R^{d_2},
    \end{equation}
    where $C_1:\R^{d_1}\to\R$ and $C_2:\R^{d_2}\to\R$ are such that \begin{itemize}
    \item both $C_1$ and $C_2$ are radial functions: there exist $c_i:\R^+_0\to\R$, $i=1,2$, such that \begin{equation*}
        C_i(x_i) = c_i(\|x_i\|), \qquad \forall x_i\in\R^{d_i}, 
    \end{equation*}
    \item $c_1$ is completely monotone\footnote{A function $f : \R^+_0 \to \mathbb{R}$ is said to be \emph{completely monotone} if it is of class $C^\infty$ and satisfies \begin{equation*}
        (-1)^n f^{(n)}(x) \ge 0 \quad \text{for all } x>0 \text{ and all } n \in \mathbb{N}_0 .
    \end{equation*}
    Complete monotonicity of $c_1$ can in fact be relaxed, see \cite{extending_gneiting}.} (in particular, $C_1\ge0$),
    \item $C_2>0$, and $\frac1{c_2}$ has a completely monotone derivative.
\end{itemize} 
\end{assumption}

On top of the previous assumption, we will assume that either $C_1$ or $C_2$, or both, are regularly varying according to the following definition.

\begin{definition}[Regularly varying function]
\label{ass:RV}
    A function $G:\R^d \to \R$ is \emph{regularly varying} at infinity with index $-\rho<0$ if 
    \begin{equation*}
        G(|x|) \sim |x|^{-\rho} \, L(|x|), \qquad \text{as } |x|\to\infty,
    \end{equation*}
    where $L:\R\to\R$ is a \emph{slowly varying} function at infinity, that is, \begin{equation}
    \label{eq:SV}
        \lim_{|x| \to \infty}  \frac{L(\alpha x)}{L(x)} = 1, \qquad \forall \alpha > 0.
    \end{equation}
    If, in addition, $\lim_{x \to \infty} L(x) = M \neq 0$, then we say that $G$ is \emph{asymptotically power-law} with exponent $-\rho$. In the following, we omit to explicit mention ``at infinity''.
\end{definition}

\begin{remark}
    In the following, we will assume that either $C_1$, $C_2$, or both are power laws with negative exponents. We remark that the nature of the results doesn't change when considering proper fregularly varying functions.
\end{remark}

Our next assumption entails the rate at which the windows $t_1(t)D_1$ and $t_2(t)D_2$ are growing as $t\to\infty$, with an interaction with the Gneiting structure of $C$. It ensures that the effective spatial rescaling induced by the Gneiting structure diverges sufficiently fast.

\begin{assumption}[Rate]
\label{ass:KEY} 
Assume that $C_2$ is radial, with associated radial covariance function $c_2$, and that \begin{equation*}
    \lim_{t\to+\infty} t_1(t) c_2(t_2(t))^{1/d_1} = +\infty\,. 
\end{equation*}
If $C_2$ is asymptotically power-law with exponent $-\rho_2<0$, the previous assumption is satisfied if \begin{equation*}
    \lim_{t\to\infty} t_1(t)t_2(t)^{-\rho_2/d_1} = \infty.
\end{equation*}
\end{assumption}

We are now in a position to state our main results: two central limit theorems 
for functionals of generic Hermite rank $R\ge 2$, 
\Cref{thm:ltBREUERMAJOR} and \Cref{thm:ltBALTO}, 
and a non-central one for functionals of Hermite rank $R=2$, 
\Cref{thm:ltROSENBLATT}. 

\begin{remark}[The case $R=1$]
When the Hermite rank of $\varphi$ is $R=1$, the functional $Y(t)$ is asymptotically fully correlated to its first chaotic component, which is linear in the underlying Gaussian field, see indeed \cref{prop:reduction}, and also \cref{rem:R1variance}. Therefore, after normalization, 
$\widetilde Y(t)$ is asymptotically Gaussian.
\end{remark}

\begin{theorem}
\label{thm:ltBREUERMAJOR}
    Let us consider $Y(t)$ as in \eqref{eq:Yt}. Suppose Assumptions \ref{ass:gneiting} and \ref{ass:KEY} hold. Let $R\ge2$ be the Hermite rank of $\varphi$.
    If $C\in L^R(\mathbb R^{d})$, then, as $t\to\infty$,  
    \begin{equation*}
        \widetilde Y(t) \convlaw \mathcal N(0,1).
    \end{equation*}
    Moreover, $\Var(Y(t))\sim \ell \ t_1(t)^{d_1}t_2(t)^{d_2}$, where \begin{equation*}
        \ell = \vol(D_1)\vol(D_2) \, \sum_{q\ge R} q! \, a_q^2 \, \|C_1\|^q_{L^q(\R^{d_1})}\|C_2\|^{q-1}_{L^{q-1}(\R^{d_2})}.
    \end{equation*} 
    If $C_1\in L^{R}(\mathbb R^{d_1})$, and $C_2\notin L^{R-1}(\mathbb R^{d_2})$, but $C_2$ is slowly or regularly varying, then, as $t\to\infty$, 
    \begin{equation*}
        \widetilde Y(t) \convlaw \mathcal N(0,1).
    \end{equation*}
    Moreover, $\Var(Y(t)) \sim \ell^\prime t_1(t)^{d_1} t_2(t)^{2d_2} \, \int_{t_2(t)D_2} C_2(z_2)^R \, dz_2$, for some positive constant $\ell^\prime$.
\end{theorem}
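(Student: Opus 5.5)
The plan is the standard chaos-by-chaos strategy: the Fourth Moment Theorem plus an $L^2$ tail truncation. Expand $Y(t)-\E[Y(t)]=\sum_{q\ge R}a_q\int_{T_t}H_q(B_x)\,dx$, where $T_t:=t_1(t)D_1\times t_2(t)D_2$. Call the $q$-th term $Y_q(t)$, so that
\begin{equation*}
\Var(Y_q(t))=q!\,a_q^2\int_{T_t}\int_{T_t}C(x-y)^q\,dx\,dy .
\end{equation*}

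\textbf{Step 1: variance asymptotics.} Change variables $z_1\mapsto z_1 C_2(z_2)^{1/d_1}$ in the $x_1$-direction of the Gneiting form \eqref{eq:Cgneiting}. This gives the key identity
\begin{equation*}
\int_{\R^{d_1}} C(z_1,z_2)^q\,dz_1=C_2(z_2)^{q-1}\,\|C_1\|^q_{L^q(\R^{d_1})}.
\end{equation*}
In particular, $C\in L^q(\R^d)$ if and only if $C_1\in L^q$ and $C_2\in L^{q-1}$. Since $|C|\le1$, integrability for $q=R$ implies it for all $q\ge R$.

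On the window we write the double integral as $\int_{T_t-T_t}C(z)^q\,|T_t\cap(T_t+z)|\,dz$. Rescaling gives $|T_t\cap(T_t+z)|/(t_1^{d_1}t_2^{d_2})\to\vol(D_1)\vol(D_2)$ pointwise, and dominated convergence yields $\Var(Y_q(t))\sim q!a_q^2\|C_1\|_q^q\|C_2\|_{q-1}^{q-1}\vol(D_1)\vol(D_2)\,t_1^{d_1}t_2^{d_2}$. We sum over $q$ using the uniform bound $\int_{T_t}\int_{T_t}|C|^q\le\int_{T_t}\int_{T_t}|C|^R$ and $\sum q!a_q^2<\infty$. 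This gives $\ell$.

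Assumption \ref{ass:KEY} enters as follows. The $z_1$-integral runs over $t_1(D_1-D_1)$, not all of $\R^{d_1}$. After the rescaling, the effective spatial window is $t_1 C_2(z_2)^{1/d_1}(D_1-D_1)$, and for $|z_2|\lesssim t_2$ this diverges precisely by Assumption \ref{ass:KEY} (using monotonicity of $c_2$, which follows from $1/c_2$ having completely monotone derivative). So the truncated $z_1$-integral tends to the full one, uniformly enough in $z_2$.

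In the second regime ($C_2\notin L^{R-1}$, regularly or slowly varying), the $z_2$-integral is dominated by large $|z_2|$. Then $\int_{T_t}\int_{T_t}C^R\sim \vol(D_1)\|C_1\|^R_R\, t_1^{d_1}\int_{(t_2D_2)^2}C_2(x_2-y_2)^{R-1}\,dx_2\,dy_2$, which is evaluated by the usual regular-variation (Karamata) argument. I expect to match the stated form $t_1^{d_1}t_2^{2d_2}\int_{t_2D_2}C_2^R$ only up to correctly identifying the exponent; I would redo this bookkeeping carefully, since at face value $R-1$ rather than $R$ appears. The point is that this term dominates all chaoses $q>R$, which is what the reduction \cref{prop:reduction} provides. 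So $\widetilde Y(t)$ and $\widetilde{Y_R(t)}$ share their limit.

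\textbf{Step 2: fourth moment / contractions.} For each fixed $q$, by the Fourth Moment Theorem it suffices to show $\|f_{q,t}\otimes_r f_{q,t}\|\to0$ for $1\le r\le q-1$, normalized by the variance. Following \cite[Chapter~7]{bluebook}, these contraction norms are bounded by integrals of the form
\begin{equation*}
\int_{T_t^4}|C(x_1-x_2)|^r|C(x_3-x_4)|^r|C(x_1-x_3)|^{q-r}|C(x_2-x_4)|^{q-r}.
\end{equation*}
The rescaling trick above factorizes the $x_1$-components: integrating the spatial variables yields products of $\|C_1\|$-norms times powers of $C_2$. This reduces the estimate to the separable case treated in \cite{pdom24}, i.e.\ to the statement that Gneiting covariances are asymptotically separable in the sense of \cref{def:asympsep}. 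The separable-case bounds (Young/Hölder on the $x_1$ and $x_2$ blocks separately) then give decay in case 1. In case 2, where the dependence is long-memory only in $x_2$, the contractions are compared with those of a $d_2$-dimensional Breuer–Major problem in which the spatial block contributes a volume factor $t_1^{d_1}\to\infty$; that factor forces the normalized contractions to vanish.

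\textbf{Step 3: conclusion and main obstacle.} Combining the chaos-wise CLT with the uniform tail control gives $\widetilde Y(t)\convlaw\mathcal N(0,1)$ (Peccati–Tudor for the multichaos statement). The main obstacle is Step 2 in the second regime. There the Gneiting coupling makes the spatial scale $C_2(z_2)^{1/d_1}$ vary with the temporal lag, and the factorization is only asymptotic. I would first try to control the error uniformly with Assumption \ref{ass:KEY}, splitting $z_2$ into $|z_2|\le\varepsilon t_2$ and its complement.
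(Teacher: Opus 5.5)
Step~1 is essentially right, and your suspicion about the stated variance is justified. By \cref{prop:variance}(2) and \cref{rem:useful var C1 int}, in the second regime $\Var(Y(t))\asymp t_1^{d_1}\int_{(t_2D_2)^2}C_2(x_2-y_2)^{R-1}\,dx_2\,dy_2\asymp t_1^{d_1}t_2^{2d_2}c_2(t_2)^{R-1}$. The displayed formula in the statement is therefore inconsistent with the paper's own proposition.

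The genuine gap is Step~2 in the second regime. This is the heart of the theorem, you leave it open, and the mechanism you sketch for it is not the right one. There the temporal block is long-range dependent, so there is no $d_2$-dimensional Breuer--Major problem to compare with; Gaussianity has to come from spatial averaging. What counts, however, is not the volume $t_1^{d_1}$. At temporal lags of order $t_2$ the Gneiting structure stretches the spatial correlation length by $c_2(t_2)^{-1/d_1}$, so the number of effectively independent spatial cells is $t_1^{d_1}c_2(t_2)$. Indeed, take $R=2$, $C_1\in L^2(\R^{d_1})$, $2\rho_2<d_2$, and $t_1^{d_1}c_2(t_2)\to0$. Then $C(x-y)\approx C_2(x_2-y_2)$ uniformly on the window for $\|x_2-y_2\|\ge\varepsilon t_2$, and $\widetilde Y(t)$ has a Rosenblatt limit on $D_2$, although $t_1^{d_1}\to\infty$. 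So any proof must extract decay in the effective scale $t_1c_2(t_2)^{1/d_1}$, which is exactly what Assumption~\ref{ass:KEY} sends to infinity.

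The factorization you invoke in Step~2 is also not available. The substitution $z_1\mapsto z_1C_2(z_2)^{1/d_1}$ rescales one factor, whereas the four factors of a contraction carry four different temporal lags. Moreover, asymptotic separability is an output of the paper (\cref{rem:asympsep}), defined only through cyclic products of $C$, so it cannot serve as an input. In the first regime this is harmless: $C\in L^q(\R^d)$ for all $q\ge R$, so the classical Breuer--Major bound applies verbatim on the anisotropic window, which is what the paper does via \cref{thm 631}.

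The idea that closes the second regime is decoupling by monotonicity. Since $c_1$ is completely monotone and $(1/c_2)'\ge0$, both $c_1$ and $c_2$ are nonincreasing, and for $x_2,y_2\in t_2D_2$
\begin{equation*}
C(x-y)\le C_2(x_2-y_2)\,C_1\big((x_1-y_1)\,c_2(t_2\diam(D_2))^{1/d_1}\big).
\end{equation*}
The paper uses this to bound the contraction integral by a temporal four-cycle of powers of $C_2$ times a spatial four-cycle of a fixed rescaled kernel. The Breuer--Major estimate for $C_1\in L^R$ at scale $t_1c_2(t_2\diam(D_2))^{1/d_1}\to\infty$ then supplies the vanishing factor.

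If you adopt this route, check the bookkeeping. Freezing all four lags at the worst scale costs a factor $c_2(t_2)^{-2}$. For $R=2$ the temporal four-cycle absorbs this only when $\rho_2<3d_2/4$. For $\rho_2\in(3d_2/4,d_2)$ the four-cycle is bounded below by a multiple of $t_2^{d_2}$ (from configurations with all four points within unit distance), which is much larger than $t_2^{4d_2}c_2(t_2)^4$. The product bound is then inconclusive when $t_1^{d_1}c_2(t_2)$ diverges slowly.

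Keeping the lag-dependent scales $\lambda_i=C_2(\ell_i)^{1/d_1}$, with $\ell_i=x_2^{(i)}-x_2^{(i+1)}$, avoids this loss. Apply Young's inequality \eqref{eq:young} together with two estimates:
\begin{itemize}
\item $\|C_1(\lambda\,\cdot)\|_{L^2}^2\le\lambda^{-d_1}\|C_1\|_{L^2}^2$;
\item $\int_{t_1(D_1-D_1)}C_1(\lambda z)\,dz=o\big(\lambda^{-d_1/2}t_1^{d_1/2}\big)$, uniformly in $\lambda\ge c_2(t_2\diam(D_2))^{1/d_1}$ (by Breuer--Major and Assumption~\ref{ass:KEY}).
\end{itemize}
This bounds the $R=2$ numerator by $o(1)\,t_1^{2d_1}\int_{(t_2D_2)^4}\prod_{i=1}^4C_2(\ell_i)^{1/2}$. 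By \cref{lem:cactus} applied to $c_2^{1/2}$, whose index is $-\rho_2/2\in(-d_2/2,0)$, the last integral is $O\big(t_2^{4d_2}c_2(t_2)^2\big)$, which matches the squared variance.
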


\begin{remark}
The previous result clarifies the role of \Cref{ass:KEY}. 
In particular, it shows the perhaps counter-intuitive fact that short-range dependence in the first block alone (i.e. $C_1\in L^R(\mathbb R^{d_1})$) is sufficient to determine the asymptotic behavior of the entire two-domain functional, even when the second block exhibits long-range dependence.
\end{remark}

\begin{theorem} 
\label{thm:ltBALTO}
    Let us consider $Y(t)$ as in \eqref{eq:Yt}. Let $R\ge2$ be the Hermite rank of $\varphi$. Suppose Assumptions \ref{ass:gneiting} and \ref{ass:KEY} hold. 
    If $C_1\notin L^R(\R^{d_1})$, but it is asymptotically a power-law with exponent $-\rho_1<0$ such that $R\rho_1<d_1$, and if $C_2\in L^{R(1-\rho_1/d_1)}(\R^{d_2})$, then, as $t\to\infty$, \begin{equation*}
        \widetilde Y(t) \convlaw \mathcal N(0,1).
    \end{equation*}
    Moreover, $\Var(Y(t))\sim \ell^{\prime\prime} \, t_2(t)^{d_2} \, t_1(t)^{2d_1-R\rho_1}$, for some positive constant $\ell^{\prime\prime}$.
\end{theorem}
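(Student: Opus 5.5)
We work with the chaos decomposition $Y(t)-\E[Y(t)]=\sum_{q\ge R} a_q\int_{t_1D_1\times t_2D_2} H_q(B_x)\,dx=:\sum_{q\ge R}Y_q(t)$, with $\Var(Y_q(t))=q!\,a_q^2\int\int C(x-y)^q\,dx\,dy$. The plan has three parts: compute the variance of each chaos, show that the $R$-th chaos carries the whole variance, and prove a fourth moment / contraction CLT for the $R$-th chaos.

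\textbf{Step 1 (variance of a single chaos).} After the change of variables $z=x-y$, the variance of $Y_q(t)$ is
\begin{equation*}
\int_{\R^{d_2}}\int_{\R^{d_1}} \gamma_{1,t}(z_1)\,\gamma_{2,t}(z_2)\, c_2(\|z_2\|)^q\, c_1\bigl(\|z_1\|c_2(\|z_2\|)^{1/d_1}\bigr)^q\,dz_1\,dz_2 ,
\end{equation*}
where $\gamma_{i,t}(z_i)=\vol(t_iD_i\cap(t_iD_i+z_i))$. In the inner integral we substitute $w=z_1c_2(z_2)^{1/d_1}$. This produces a Jacobian $c_2^{-1}$, so the inner integral becomes
\begin{equation*}
c_2^{q-1}\int \gamma_{1,t}\bigl(w\,c_2^{-1/d_1}\bigr)\,c_1(\|w\|)^q\,dw .
\end{equation*}
\Cref{ass:KEY}, together with $c_2(z_2)\ge c_2(t_2\,\mathrm{diam})$ on the relevant range (by monotonicity of $c_2$), guarantees that $t_1c_2^{1/d_1}\to\infty$ uniformly. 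Hence $\gamma_{1,t}(w c_2^{-1/d_1})=t_1^{d_1}\,\gamma_{1,1}\bigl(w/(t_1c_2^{1/d_1})\bigr)$. Two regimes follow.
\begin{itemize}
\item[(a)] If $c_1^R$ is integrable, dominated convergence gives $t_1^{d_1}\vol(D_1)\|C_1\|_q^q$.
\item[(b)] If $c_1\sim|w|^{-\rho_1}$ with $R\rho_1<d_1$, the inner integral is asymptotic to $t_1^{2d_1-q\rho_1}\,c_2^{q\rho_1/d_1}\,\kappa_q$. This uses the standard Riesz-type limit $\int\gamma_{1,1}(u)|u|^{-q\rho_1}du$, valid for the finitely many $q$ with $q\rho_1<d_1$.
\end{itemize}
In regime (b) the dependence on $c_2$ is $c_2^{q-1+q\rho_1/d_1}\cdot c_2^{-1}\cdot c_2$, i.e. $c_2^{q(1-\rho_1/d_1)}$. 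Indeed, the Jacobian $c_2^{-1}$ multiplies $\gamma_{1,t}(wc_2^{-1/d_1})\approx t_1^{2d_1-q\rho_1}c_2^{(q\rho_1-d_1)/d_1}\cdots$, and careful bookkeeping gives exactly $c_2^{q}\,c_2^{-\rho_1 q/d_1}$. The outer $z_2$-integral then yields $t_2^{d_2}\vol(D_2)\|C_2\|^{R(1-\rho_1/d_1)}_{R(1-\rho_1/d_1)}$ under the integrability hypothesis of \Cref{thm:ltBALTO}. Altogether, $\Var(Y_R(t))\sim\ell''t_2^{d_2}t_1^{2d_1-R\rho_1}$.

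\textbf{Step 2 (reduction to the $R$-th chaos).} We show $\sum_{q>R}\Var(Y_q)=o(\Var Y_R)$. For $q$ with $q\rho_1<d_1$, Step 1 gives a strictly smaller power of $t_1$. For $q\rho_1\ge d_1$ we use case (a), or a logarithmic bound at equality, giving $O(t_1^{d_1}t_2^{d_2})$ or $O(t_1^{d_1}\log t_1\,t_2^{d_2})$; this is $o(t_1^{2d_1-R\rho_1}t_2^{d_2})$. The tail in $q$ is controlled uniformly by $|C|^q\le|C|^{R+1}$ and $\sum q!a_q^2<\infty$. We also need the $c_2$-exponents for $q>R$ to be integrable; since $c_2\le1$ and the exponent increases with $q$, this is automatic. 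This step matches the reduction principle \cref{prop:reduction}.

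\textbf{Step 3 (CLT for the $R$-th chaos).} By the fourth moment theorem it suffices to show that the contraction norms $\|f_t\otimes_r f_t\|$, $1\le r\le R-1$, are $o(\Var Y_R)$. We bound them in the usual way by
\begin{equation*}
\int |C(x_1-x_2)|^{r}|C(x_2-x_3)|^{R-r}|C(x_3-x_4)|^{r}|C(x_4-x_1)|^{R-r} .
\end{equation*}
Using $C\le c_2(z_2)\,c_1(\|z_1\|c_2^{1/d_1})$ and applying the same substitution blockwise, this factorizes up to constants into a $d_1$-part times a $d_2$-part. The $d_1$-part is the known Breuer–Major/Dobrushin–Major contraction for a power law with $R\rho_1<d_1$. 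That contraction is of the same order as the square of the variance, so the Gaussianity must come from the $d_2$ block. There the effective covariance $c_2^{1-\rho_1/d_1}$ is in $L^R$, and the Breuer–Major mechanism (Young's inequality on $t_2D_2$) gives an extra factor $o(1)$.

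\textbf{Main obstacle.} The hardest point is making this factorization rigorous. The substitution $w=z_1c_2(z_2)^{1/d_1}$ mixes the blocks inside the four-fold contraction integral, and the scalings $c_2(x_2-x_3)$ differ from factor to factor. I would first try to dominate each factor by the separable model $c_2(z_2)^{1-\rho_1/d_1}\,|z_1|^{-\rho_1}$, which is valid on the region where $\|z_1\|c_2^{1/d_1}$ is large. The complementary region would be handled by the bound $C\le c_2$, as an error term.
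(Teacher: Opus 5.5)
Your proposal is correct and follows essentially the same route as the paper. Both reduce to the $R$-th chaos via $\int\!\!\int C^{R+1}=o\bigl(\int\!\!\int C^{R}\bigr)$, use the variance asymptotics of \cref{prop:variance}~(3), and apply the fourth moment theorem after dominating the Gneiting covariance by the separable model $\|z_1\|^{-\rho_1}C_2(z_2)^{1-\rho_1/d_1}$; the $D_1$-block then stays bounded because $R\rho_1<d_1$, and the $D_2$-block is $o(1)$ by the Breuer--Major argument because $C_2^{1-\rho_1/d_1}\in L^R(\R^{d_2})$.

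The obstacle you flag is milder than you expect. Since $0<c_1\le 1$ and $c_1$ is asymptotically a power law, $c_1(s)\lesssim s^{-\rho_1}$ for all $s>0$, so the separable domination holds globally and no complementary region is needed (the paper splits one off and treats it as a remainder).
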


\begin{theorem} 
\label{thm:ltROSENBLATT}
    Let us consider $Y(t)$ as in \eqref{eq:Yt}. Let $R$ be the Hermite rank of $\varphi$. Suppose Assumptions \ref{ass:gneiting} and \ref{ass:KEY} hold, and that $R=2$. 
    If $C_1\notin L^2(\mathbb R^{d_1})$ and $C_2\notin L^1(\mathbb R^{d_2})$, but they both are asymptotically power-law with exponents $-\rho_1<0$ and $-\rho_2<0$, respectively, such that $2\rho_1<d_1$ and $\rho_2<\frac{d_1d_2}{2(d_1-\rho_1)}$, then, as $t\to\infty$,
    \begin{equation*}
    \widetilde Y(t) \convlaw \textrm{sgn}(a_2) \,  H_{2,D_1\times D_2}^{\rho_1,\rho_2\left(1-\frac{\rho_1}{d_1}\right)},
\end{equation*}
where $a_2$ is as in \eqref{eq: deco phi}, and $ H_{2,D_1\times D_2}^{\rho_1,\rho_2\left(1-\frac{\rho_1}{d_1}\right)}$ is a $2$-domain Rosenblatt r.v. as defined in \cref{sec:rosenblatt}.
Moreover, $\Var(Y(t)) \sim \ell^{\prime\prime\prime} \, t_1(t)^{2d_1-R\rho_1} \, t_2(t)^{2d_2-R\rho_2(1-\rho_1/d_1)}$, for some positive constant $\ell^{\prime\prime\prime}$.
\end{theorem}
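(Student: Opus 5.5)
The plan has three stages: reduce $Y(t)$ to its second chaos, show that the Gneiting covariance is asymptotically separable at the relevant scales, and then transfer the known separable Rosenblatt limit of \cite{pdom24}.

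\textbf{Reduction to the second chaos.} Write $Y(t)-\E[Y(t)]=\sum_{q\ge2}a_q\int_{t_1D_1\times t_2D_2}H_q(B_x)\,dx=:\sum_{q\ge 2}Y_q(t)$. By the reduction principle (\cref{prop:reduction}), it suffices to prove two things:
\begin{itemize}
\item $\Var(Y_2(t))$ has the stated order $t_1^{2d_1-2\rho_1}\,t_2^{2d_2-2\rho_2(1-\rho_1/d_1)}$;
\item $\sum_{q\ge3}\Var(Y_q(t))=o(\Var(Y_2(t)))$.
\end{itemize}

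\textbf{Second moment.} We have $\Var(Y_2)=2a_2^2\iint C(x-y)^2\,dx\,dy$. After the change of variables $x=(t_1u_1,t_2u_2)$, the integrand is
\[
c_2(t_2\|v_2\|)^2\,c_1\big(t_1\|v_1\|\,c_2(t_2\|v_2\|)^{1/d_1}\big)^2 .
\]
\Cref{ass:KEY} makes the argument of $c_1$ diverge for $v_1,v_2\neq0$. The power-law asymptotics then give the pointwise limit
\[
M\,t_1^{-2\rho_1}\,t_2^{-2\rho_2(1-\rho_1/d_1)}\,\|v_1\|^{-2\rho_1}\,\|v_2\|^{-2\rho_2(1-\rho_1/d_1)} ,
\]
which is a separable kernel.

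The key technical step is to justify dominated convergence. I would use $c_1(r)\lesssim \min(1,r^{-\rho_1})$, which follows from complete monotonicity and the power law, and $c_2\le1$. This gives the bound
\[
C(x)^2\lesssim c_2^2\min\big(1,(t_1\|v_1\|)^{-2\rho_1}c_2^{-2\rho_1/d_1}\big).
\]
The region $t_1\|v_1\|\,c_2^{1/d_1}\lesssim1$ contributes at most $t_1^{-d_1}c_2(t_2\|v_2\|)^{1-\ldots}$ terms. Its contribution must be shown negligible, and this is exactly where the rate condition enters. The integrability conditions $2\rho_1<d_1$ and $2\rho_2(1-\rho_1/d_1)<d_2$ (the latter equivalent to the hypothesis $\rho_2<\frac{d_1d_2}{2(d_1-\rho_1)}$) make $\|v_1\|^{-2\rho_1}\|v_2\|^{-2\rho_2(1-\rho_1/d_1)}$ locally integrable on the bounded domain. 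This is precisely the asymptotic separability of \cref{def:asympsep}, with effective exponents $(\rho_1,\rho_2(1-\rho_1/d_1))$.

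\textbf{Higher chaoses.} For $q\ge3$, I would bound $\Var(Y_q)\le q!\,a_q^2\iint |C|^{3}$ using $|C|\le1$, and repeat the same splitting. Since $3\rho_1$ may exceed $d_1$, this yields at worst $t_1^{d_1}$ or $t_1^{2d_1-3\rho_1}$ (up to logarithms), times an analogous $t_2$ factor. Both are $o$ of the second-chaos variance because $2\rho_1<d_1$ and $2\rho_2(1-\rho_1/d_1)<d_2$. Summability in $q$ follows from $\sum q!\,a_q^2<\infty$.

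\textbf{Identifying the limit.} For $\widetilde Y_2$, convergence of all cumulants suffices, since the Rosenblatt law is determined by its moments. The $m$-th cumulant of a second-chaos integral is $2^{m-1}(m-1)!\,a_2^m$ times the cyclic integral $\int\prod_{j=1}^m C(x_j-x_{j+1})$ over $(t_1D_1\times t_2D_2)^m$. After rescaling and normalizing, the same pointwise limit combined with domination by the separable power kernel (integrable on $(D_1\times D_2)^m$ under the exponent conditions, via the usual Rosenblatt cumulant finiteness from \cref{sec:rosenblatt}) gives convergence to the cumulants of the separable model with exponents $\rho_1$ and $\rho_2(1-\rho_1/d_1)$. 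By \cite{pdom24}, these are the cumulants of $\textrm{sgn}(a_2)\,H_{2,D_1\times D_2}^{\rho_1,\rho_2(1-\rho_1/d_1)}$; the sign appears because odd cumulants carry $a_2^m/|a_2|^m$.

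\textbf{Main obstacle.} I expect the difficulty to be the uniform domination near the set where $t_1\|v_1\|\,c_2(t_2\|v_2\|)^{1/d_1}$ stays bounded. There the Gneiting coupling prevents factorization, and \Cref{ass:KEY} must be used quantitatively. My first attempt would be to bound the cyclic products by
\[
\prod_j \min\big(1,(t_1\|v_1^j\|)^{-\rho_1}c_2^{-\rho_1/d_1}\big)\,c_2 ,
\]
and then check that the ``near-diagonal'' pieces vanish after normalization.
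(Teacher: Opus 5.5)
Your plan is correct and is essentially the paper's argument. It uses the same steps: reduction to the second chaos via \cref{prop:reduction}, cumulants of $\widetilde Y(t)[2]$ written as normalized cyclic integrals, pointwise convergence of the rescaled Gneiting kernel to the separable kernel $\|v_1\|^{-\rho_1}\|v_2\|^{-\rho_2(1-\rho_1/d_1)}$ plus dominated convergence (\cref{lem:cactus gneiting nonclt}, with \cref{prop:variance}~(4) for $k=2$), and identification of the limiting cumulants through \cref{prop:rosenblatt characteristic}.

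The near-diagonal step you flag as the main obstacle is handled in the paper by applying the Potter domination only for large arguments and bounding the remaining region by its $L^1$ norm times $L^2$ norms via \eqref{eq:locallyint cyclic}. Your own bound already closes it, though, and more directly. First, $c_i(r)\lesssim r^{-\rho_i}$ holds for all $r>0$ and $i=1,2$; note that $c_2\le 1$ alone would not suffice. Second, the resulting exponent $1-\rho_1/d_1$ on $c_2$ is positive. Third, \Cref{ass:KEY} gives $c_1(t_1c_2(t_2)^{1/d_1})\asymp (t_1c_2(t_2)^{1/d_1})^{-\rho_1}$ in the denominator. Together these show that the rescaled kernel is dominated, uniformly for large $t$, by a constant times $\|v_1\|^{-\rho_1}\|v_2\|^{-\rho_2(1-\rho_1/d_1)}$. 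The cyclic products of this dominating kernel are integrable because $2\rho_1<d_1$ and $2\rho_2(1-\rho_1/d_1)<d_2$, so no separate near-diagonal estimate is needed.
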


\begin{remark}[Criticality] In the critical cases when $2\rho_1 = d_1$ or $\rho_2 = \frac{d_1 d_2}{2(d_1 - \rho_1)}$, the variance is expected to exhibit logarithmic corrections. The present analysis can be adapted to cover these critical cases, as well as the case $R\rho_1=d_1$ in \cref{thm:ltBALTO}; however, this would require lengthy but straightforward refinements of the asymptotic expansions for regularly varying integrals, which we omit for brevity.
\end{remark}

\begin{remark}
Figure~\ref{fig:lt} provides a comprehensive view of the results stated in Theorems \ref{thm:ltBREUERMAJOR}, \ref{thm:ltBALTO}, and \ref{thm:ltROSENBLATT}, under the assumptions that $R=2$, and $C_1$ and $C_2$ are asymptotically power-laws with exponents $-\rho_1<0$ and $-\rho_2<0$, respectively.
\end{remark}

\begin{figure}
\centering
\begin{tikzpicture}[>=stealth, scale=1]
\draw[->] (-0.5,0) -- (8,0) node[right] {$\rho_1$};
\draw[->] (0,-0.5) -- (0,5) node[left] {$\rho_2$};

\draw[very thick,cyan] (4,0) -- (4,4);
\draw[very thick,teal] (4,4) -- (4,4.5);
\draw[dashed] (4,4.5) -- (4,5);

\draw[dotted,gray] (0,4) -- (4,4);
\draw[very thick,magenta] (0,2) to [bend right=25] (4,4);
\draw[very thick,red] (4,4) -- (7.5,4);
\draw[dashed] (7.5,4) -- (8,4);

\node[left] at (0,2) {$\frac{d_2}{2}$};
\node[left] at (0,4) {${d_2}$};
\node[below] at (4,0) {$\frac{d_1}{2}$};


\node[align=center] at (2,4) {$t_1^{2d_1-{2}\rho_1}t_2^{d_2}$\\Gaussian};

\node[align=center] at (6.1,4.75) {$t_1^{d_1}t_2^{d_2}$\\Gaussian};

\node[align=center] at (6.2,2) {$t_1^{d_1}t_2^{2d_2-{}\rho_2}$\\Gaussian};

\fill[gray!20,opacity=0.5]
    (0,0)
    -- (0,2)
    to [bend right=25] (4,4)
    -- (4,0)
    -- cycle;
\node[align=center] at (2,1) {$t_1^{2d_1-{2}\rho_1}t_2^{2d_2-{2}\rho_2(1-\frac{\rho_1}{d_1})}$\\ $2$-domain Rosenblatt};
\end{tikzpicture}

\caption{\small Four non-critical regimes of the variance of $Y_{t_1,t_2}$ (up to constants) and limiting distribution of $\widetilde Y_{t_1,t_2}$, when the Hermite rank of $\varphi$ is $R=2$, and both $C_1$ and $C_2$ are regularly varying with parameters $-\rho_1$ and $-\rho_2$, respectively, with constant slowly varying components. In magenta, the line $\rho_2 = \frac{d_1d_2}{2(d_1-\rho_1)}$.  
The limiting distribution is everywhere Gaussian, except for $(\rho_1,\rho_2)$ in the gray region delimited by the magenta and cyan curves, where the limiting distribution is a random variable in the second chaos.
}
\label{fig:lt}
\end{figure}
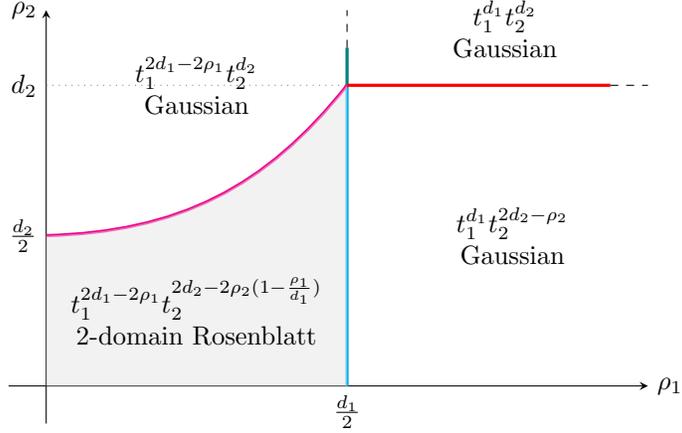

\begin{remark}
We emphasize that \cref{thm:ltBREUERMAJOR} (in the case $C_2\notin L^{R-1}(\R^{d_2})$) and \cref{thm:ltBALTO} provide two instances of a broader class of functionals exhibiting Gaussian fluctuations in the presence of long-range dependence, outside the critical regime. In both situations, the normalized functional $\widetilde Y(t)$ satisfies a Central Limit Theorem, while the variance grows superlinearly with respect to the observation volume, namely
\begin{equation*}
    \Var(Y(t)) \gg \vol\!\big(t_1(t)D_1\times t_2(t)D_2\big),
    \qquad t\to\infty.
\end{equation*}
To the best of our knowledge, examples of this phenomenon were first identified in \cite{MN22} and subsequently extended in \cite{pdom24}.
\end{remark}

\subsubsection{Overview of the proof}
\textbf{Notation.} In the following, we will write $t_1$ and $t_2$ omitting the dependence on $t$. Recall that $t_1,t_2\to \infty$ as $t\to\infty$.

\medskip

We refer to \cref{sec:prelim} for all needed preliminaries, and \cref{subsec:proofLT} for a complete proof.
In this overview, we suppose for brevity that $\f = H_2$, the $2$nd Hermite polynomial. Since $\widetilde Y(t)$ belongs to the second Wiener chaos, 
its distribution is moment-determined (see \cref{fredholm does the job}). Because moments and cumulants uniquely determine each other, it suffices to analyze the cumulants of $\widetilde Y(t)$, denoted by $\kappa_k(t)$ for $k\ge 1$. As $\widetilde Y(t)$ is centered and standardized, we have $\kappa_1(t)=0$ and $\kappa_2(t)=1$ for every $t>0$. For higher $k\ge 2$, we have that \begin{equation}
\label{eq:kappaktilde}
    \kappa_k(t) = 2^{k-1}(k-1)! \, c_k(t),
\end{equation}
where \begin{equation*}
    c_k(t) := \frac{\int_{(t_1D_1 \times t_2D_2)^k} C(x^{(1)}-x^{(2)})C(x^{(2)}-x^{(3)}) \cdots C(x^{(k)}-x^{(1)}) dx^{(1)}  \ldots dx^{(k)}}{\left(\int_{(t_1D_1 \times t_2D_2)^2} C^2(x^{(1)}-x^{(2)}) dx^{(1)}dx^{(2)}\right)^{k/2}},
\end{equation*}
and $C$ is the covariance function of the underlying Gaussian field $B$. Remark that, by H\"older inequality, $\{c_k(t)\}_t$ is uniformly bounded.

In \cref{thm:ltBREUERMAJOR,thm:ltBALTO}, 
we prove that $c_4(t)\to 0$ as $t\to\infty$. The Fourth Moment Theorem (see \cref{thm:4th}) then yields asymptotic normality. In particular, this implies $c_k(t)\to 0$ for all $k\ge 3$, since $\kappa_k(t)\to \kappa_k(N)=0$ for every $k\ge 3$.

In \cref{thm:ltROSENBLATT}, under additional long-range dependence assumptions on $C_1$ and $C_2$ and regular variation, we explicitly compute the limit \begin{equation}
\label{eq:limit}
    \lim_{t \to\infty} c_k(t), \qquad \forall k\ge 2,
\end{equation}
and show that the limiting cumulants coincide with those of a normalized 
$2$-domain Rosenblatt random variable, introduced in \cite{pdom24}. 
This random variable belongs to the second Wiener chaos and is non-Gaussian. Since distributions in the second Wiener chaos are moment-determined, convergence of cumulants implies convergence in law.
To compute the limit \eqref{eq:limit}, we exploit the properties of regularly varying functions, which may be controlled by suitable power-laws by means of Potter bounds, see \cref{prop:potter}. 

\begin{remark}
   The coefficients $c_k(t)$ appear already in the seminal paper \cite[Remark 2]{cltNuaPec} as traces of $k$th power of the Hilbert–Schmidt operator associated to the covariance kernel of the underlying Gaussian field $B$. 
\end{remark}

\subsubsection{Further remarks and example}

\begin{remark}[On an alternative characterization via Fredholm determinants]
We also note that moment determinacy in the second Wiener chaos can be established through an alternative approach based on Fredholm determinants. Indeed, random variables in the second chaos admit a representation in terms of trace class Hilbert--Schmidt operators, and their characteristic functions can be expressed via Fredholm determinants of the associated covariance operators. This provides a method entirely equivalent to the cumulant analysis employed here. This operator-theoretic approach is pursued, for instance, in \cite{ruizmedina2026noncentrallimitresultsspatiotemporal}.
\end{remark}

\begin{remark}\label{rem:whyR2}
In the non-central case, although we assume that $R=2$, we can cover many relevant examples. This is the case of $p$-power variations of stationary Gaussian processes, for any integer $p\ge 2$; this class includes the quadratic variation, see \cite{BNCP09}. Many examples of geometric functionals of Gaussian fields have Hermite rank either $1$ or $2$, too: volumes of excursion sets of Euclidean and spherical random fields are closely related to the study of their empirical measures, see \cite{LRM23,MariWig11}. 
It is worth noting that, in statistical applications, ranks higher than $1$ are considered \emph{unstable}, namely a small perturbation of the underlying field can lead to an abrupt change in the Hermite rank of the functional, see the discussion in \cite{BT18}. To conclude, we mention the recent paper \cite{Arp26} that unveils that \emph{typically} sinh-arcsinh transformations of Gaussian processes have Hermite rank $1$ or $2$. 
Chaotic decompositions for other geometric functionals of Gaussian fields have also been established on the torus, see, e.g., \cite{VidottoLKC22,CMRLKC23}; on the $n$-sphere and on general manifolds, see, e.g., \cite{MR4020703,stecconi2025newchaosdecompositiongaussian,pistolato2025levelareaspinrandom}. Recent contributions extend the present framework to non-local functionals as well, see \cite{mcauley2025limittheoremsnumbersign}, and to non-Gaussian fields, see \cite{gass2025universalcancellationsuniformrandom}. Recent contributions extend the present framework to non-local functionals as well, see \cite{mcauley2025limittheoremsnumbersign}, and to non-Gaussian fields, see \cite{gass2025universalcancellationsuniformrandom}.
\end{remark}

\begin{remark}[On the spectral density]
\label{rem: no spectral reg var}
Note that, in \cite[(1.7)-(1.8)]{pdom24}, both regular variation of the covariance function at $\infty$ and absolute continuity and regular variation at $0$ for the associated spectral measure are assumed. In the non-central \cref{thm:ltROSENBLATT}, the second spectral assumption is not needed, because the Hermite rank is $R=2$. 

Indeed, when $R=2$ one can reason as follows: given a regularly varying covariance function $C$ with index $-\rho$, we can take an auxiliary Cauchy covariance function $C'(\|x\|)=(1+\|x\|^2)^{-\rho/2}$ with spectral density  regularly varying in $0$ (see e.g. \cite[Example 3]{LO13}). For $R=2$, the limiting distribution is characterized by the cumulants, therefore by the coefficients $c_k$. The limiting coefficients $c_k(\infty,D)=\lim_{t\rightarrow\infty}c_k(t,D)$ (see \eqref{eq:ck}) are equal if we consider $C$ or $C'$, as we shall prove in \cref{lem:cactus}, with no further assumption on the existence of the spectral density.
\end{remark}

\begin{example}
\label{example}

We can apply the previous results to the study of the first Minkowski functional of the modulus of a Gaussian field with Gneiting covariance function, that is, the temporal average of its excursion volume over a time-evolving window; see also \cite{LRM23,LRM25}.

Consider a centered and stationary Gaussian field $B=(B_x)_{x\in\R^d}$ with zero mean. Suppose that its covariance function $C$ is of Gneiting type, with factors $C_1$ and $C_2$ as in \Cref{ass:gneiting}. Assume that both $C_1$ and $C_2$ are asymptotically power laws with exponents $-\rho_1<0$, and $-\rho_2<0$, respectively. Consider $\gamma>0$, and $K\subset\R^d$ a convex body, and the functional \begin{equation*}
    M_T := \int_0^T \int_{T^\gamma K} \mathbbm{1}(|B_x|\ge u) \, dx , \qquad T>0,
\end{equation*}
for a given level $u>0$. It is well known that $M_T$ admits the following chaotic decomposition: \begin{equation*}
    M_T = 2(1-\Phi(u)) T^{1+\gamma d} \vol(K) + \sum_{\text{even } q\ge 2} \frac{2 H_{q-1}(u)\phi(u)}{q!} \int_0^T \int_{T^\gamma K} H_q(B_x) \, dx;
\end{equation*}
see e.g. \cite{LRM25}. As a consequence of \cref{thm:ltBREUERMAJOR,thm:ltBALTO,thm:ltROSENBLATT}, we can prove the following result.

\begin{corollary} Let $\gamma\in (0,1/\rho_2)$. 
Then, if $u>0$, we have \begin{equation*}
    \Var(M_T) \asymp \begin{cases}
        T^{\gamma d + 1} & \text{if } C\in L^2(\R^{d+1}) ,\\
        T^{\gamma d + 2 - \rho_2} & \text{if } C_1\in L^2(\R^{d}), \text{ and } C_2\notin L^{1}(\R): \rho_2< 1,\\
        T^{2\gamma(d-\rho_1)+1} & \text{if } C_1\notin L^2(\R^{d}): \rho_1<d/2, \text{ and } C_2\in L^{2(1-\rho_1/d)}(\R),\\
        T^{2\gamma(d-\rho_1) + 2(1 - \rho_2(1-\rho_1/d))} &   \text{if } C_1\notin L^2(\R^{d}): \rho_1<d/2, \text{ and } C_2\notin L^{2(1-\rho_1/d)}(\R): \\ 
        &  \hfill \rho_2< \frac{d}{2(d-\rho_1)}. 
    \end{cases}
\end{equation*} 
In the first three cases, \begin{equation*}
    \widetilde M_T := \frac{M_T - 2(1-\Phi(u)) T^{1+\gamma d} \vol(K)}{\sqrt{\Var(M_T)}} \convlaw N ,
\end{equation*}
where $N$ is a standard Gaussian random variable. In the fourth case, $\widetilde M_T$ converges to a $2$-domain Rosenblatt $ H^{\rho_1,\rho_2(1-\rho_1/d)}_{2,K\times [0,1]}$, as in \cref{def:2domainRosenblatt}.
\end{corollary}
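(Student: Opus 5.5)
We apply \cref{thm:ltBREUERMAJOR,thm:ltBALTO,thm:ltROSENBLATT} with $d_1=d$, $d_2=1$, $D_1=K$, $D_2=[0,1]$, $t_1(T)=T^\gamma$, $t_2(T)=T$ and $\varphi(x)=\mathbbm{1}(|x|\ge u)$. First we determine the Hermite rank. Since $\varphi$ is even, all odd coefficients vanish. The second coefficient is $a_2=\frac{2H_1(u)\phi(u)}{2}=u\phi(u)$, which is nonzero for $u>0$, so $R=2$. Note also that $a_2>0$, so $\mathrm{sgn}(a_2)=1$ in the non-central case. The constant term $2(1-\Phi(u))T^{1+\gamma d}\vol(K)$ is $\E[M_T]$, which explains the centering in $\widetilde M_T$.

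Next we check \Cref{ass:KEY}. Because $C_2$ is asymptotically a power law with exponent $-\rho_2$, it suffices that $t_1t_2^{-\rho_2/d}=T^{\gamma-\rho_2/d}\to\infty$, that is, $\gamma>\rho_2/d$. This is where care is needed: the stated range $\gamma\in(0,1/\rho_2)$ should be read so that it guarantees the rate condition. I expect that reconciling this normalization of $\gamma$ with \Cref{ass:KEY} is the main obstacle, and I would first try to settle it by checking the intended scaling (e.g.\ time versus space roles). Modulo that point, the rate assumption holds. \Cref{ass:gneiting} holds by hypothesis.

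We then match the four cases, with $R=2$, $d_1=d$, $d_2=1$.

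\textbf{Case 1.} If $C\in L^2(\R^{d+1})$, the first part of \cref{thm:ltBREUERMAJOR} gives $\Var\sim \ell\, T^{\gamma d}T$ with $\ell>0$, since $a_2\ne0$.

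\textbf{Case 2.} If $C_1\in L^2$ and $C_2\notin L^1$ with $\rho_2<1$, the second part of \cref{thm:ltBREUERMAJOR} gives $\Var\sim \ell' T^{\gamma d}T^2\int_0^T C_2^2$. To simplify, we use the regular-variation relation of the theorem; with $R=2$ the stated variance form should be read via its proof as $t_2^{2d_2-\rho_2}$, giving $T^{\gamma d+2-\rho_2}$. Concretely, we evaluate $\int_{[0,T]^2}C_2(s-s')\,ds\,ds'\asymp T^{2-\rho_2}$ by Karamata's theorem (\cref{prop:potter}). I would double-check this against the figure's regime label $t_1^{d_1}t_2^{2d_2-\rho_2}$.

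\textbf{Case 3.} This is \cref{thm:ltBALTO}, giving $T\cdot T^{\gamma(2d-2\rho_1)}$.

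\textbf{Case 4.} This is \cref{thm:ltROSENBLATT}. The hypothesis $C_2\notin L^{2(1-\rho_1/d)}$ for a power law means exactly $\rho_2<\frac{d}{2(d-\rho_1)}$, which equals $\frac{d_1d_2}{2(d_1-\rho_1)}$. The variance exponent is $2\gamma(d-\rho_1)+2-2\rho_2(1-\rho_1/d)$. Note $C_2\notin L^1$ is implied, since $2(1-\rho_1/d)>1$ when $2\rho_1<d$.

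The convergence statements follow directly, since every theorem gives $\widetilde Y(T)\to N$ or $\to H^{\rho_1,\rho_2(1-\rho_1/d)}_{2,K\times[0,1]}$. The relations $\sim$ yield $\asymp$.
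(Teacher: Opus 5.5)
Your reduction is the one the paper intends. The paper gives no argument beyond ``as a consequence of'' the three theorems, applied with $d_1=d$, $d_2=1$, $D_1=K$, $D_2=[0,1]$, $t_1=T^\gamma$, $t_2=T$. Your individual checks are right:
\begin{itemize}
\item $R=2$, with $a_2=u\phi(u)>0$;
\item the centering by $\E[M_T]$;
\item reading case~2 through Proposition~\ref{prop:variance}(2), rather than through the variance formula as printed in Theorem~\ref{thm:ltBREUERMAJOR};
\item $C_2\notin L^{2(1-\rho_1/d)}\Rightarrow C_2\notin L^1$ in case~4.
\end{itemize}

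The point you leave ``modulo'' is a genuine gap, and it cannot be closed by reinterpreting the range of $\gamma$. Here Assumption~\ref{ass:KEY} is exactly $\gamma>\rho_2/d$. The printed range $\gamma\in(0,1/\rho_2)$ does not imply this: take $d=1$, $\rho_2=1/2$, $\gamma=1/4$.

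Moreover, for $\gamma<\rho_2/d$ the conclusion of case~2 is false, not merely unproved. Since $0\le C\le 1$, we have $\Var(M_T)\lesssim\int C^2$. Integrating $c_1(\|x-y\|c_2^{1/d})^2$ over $x-y\in\R^d$ produces a factor $c_2^{-1}\|C_1\|^2_{L^2}$, which gives
\[
\Var(M_T)\lesssim\int_{[0,T]^2}\min\bigl\{T^{2\gamma d}c_2(|s-s'|)^2,\;T^{\gamma d}c_2(|s-s'|)\bigr\}\,ds\,ds' .
\]
Split at $|s-s'|=\tau^*:=T^{\gamma d/\rho_2}=o(T)$. This gives the bound
\[
T^{\gamma d+1}(\tau^*)^{1-\rho_2}+T^{2\gamma d+1}\int_{\tau^*}^T\tau^{-2\rho_2}\,d\tau .
\]
This is $o(T^{\gamma d+2-\rho_2})$ in each of the subcases $2\rho_2<1$, $2\rho_2=1$, $2\rho_2>1$ (use $T^{\gamma d}=(\tau^*)^{\rho_2}$). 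An analogous splitting shows that the claimed rate in case~4 also fails.

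So what your argument actually proves is the corollary with the hypothesis $\gamma>\rho_2/d$ in place of $\gamma\in(0,1/\rho_2)$; the upper bound $1/\rho_2$ plays no role. You should state this replacement explicitly rather than leave it as an expected obstacle.

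Case~1 needs no rate condition at all. The dominated-convergence proof of Proposition~\ref{prop:variance}(1) and the Breuer--Major contraction bound use only $C\in L^2(\R^{d+1})$ and $t_1,t_2\to\infty$.
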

\end{example}

\subsection{Asymptotic separability}\label{sec:asympsep}

Consider a positive-definite function $f:\R^d \to \R$, and a convex body $D\subset\R^{d}$. For all $k\in\N$, $k\ge 2$, we define  \begin{equation}\label{eq:ck}
    c_k(D;f) := \frac{\int_{D^k} f(x_1-x_2)f(x_2-x_3) \cdots f(x_k-x_1) \, dx_1  \ldots dx_k}{\left(\int_{D^2} f(x_1-x_2)^2 \, dx_1 dx_2 \right)^{k/2}}.
\end{equation}
We remark that $c_2(D;f) = 1$, and, if $f\le 1$, by H\"older inequality, $c_k(D;f)$ is bounded, for all $k\ge 3$. 

\begin{definition}[Asymptotic separability]\label{def:asympsep}
    We say that a non-negative definite function $f:\R^d \to \R$ is \emph{asymptotically separable} onto $\R^{d_1}\times \R^{d_2}$, such that $d_1+d_2=d$, if there exist two non-negative definite functions $f_i:\R^{d_i}\to\R$, $i=1,2$, such that, for any convex bodies $D_i\subset\R^{d_i}$, $i=1,2$, and $k\ge 2$, we have that \begin{equation*}
        \lim_{t_1,t_2\to\infty} \left| c_k(t_1D_1\times t_2D_2;f) - \prod_{i=1,2} c_k(t_iD_i;f_i) \right| = 0.
    \end{equation*}
\end{definition}
Clearly, a separable covariance function, as defined in \eqref{eq:separable}, is asymptotically separable in the sense of the previous definition.

\begin{theorem}\label{thm:equivalence}
    Let $B=(B_x)_{x\in\R^d}$ be a real-valued, continuous, centered and unit-variance, stationary Gaussian field on $\R^d$. Let $C$ denotes its covariance function. For $i=1,2$, consider a convex body $D_i\subset \R^{d_i}$, and define
    \begin{equation*}
        Y_{t_1,t_2} = \int_{t_1D_1 \times t_2D_2} H_2(B_x) dx, \qquad t_1,t_2>0.
    \end{equation*}
    Suppose that $C$ is asymptotically separable onto $\R^{d_1}\times \R^{d_2}$. Then, there exists a real-valued, continuous, centered and unit-variance, stationary Gaussian field $B^{\textrm{Sep}} =(B_x^{\textrm{Sep}})_{x\in\R^d}$ with separable covariance function $C^{\textrm{Sep}}$, such that, given 
    \begin{equation*}
         Y_{t_1,t_2}^{\textrm{Sep}} :=  \int_{t_1 D_1 \times t_2 D_2}   H_2 (B_x^{\textrm{Sep}}) \, dx ,
    \end{equation*}
    the following are equivalent: \begin{itemize}
        \item $\widetilde Y_{t_1,t_2}^{\textrm{Sep}}\convlaw Z$, as $t_1,t_2 \to \infty$,
        \item $\widetilde Y_{t_1,t_2}\convlaw Z$, as $t_1,t_2 \to \infty$,
    \end{itemize}
    where $Z$ is the same random variable. In other words, $\widetilde Y_{t_1,t_2}$ and $\widetilde Y_{t_1,t_2}^{\textrm{Sep}}$ share the same limit in distribution, if there is one.
\end{theorem}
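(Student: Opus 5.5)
The plan is to build $B^{\textrm{Sep}}$ directly from the two functions $f_1,f_2$ in \cref{def:asympsep}, and then to use the fact that, in the second Wiener chaos, convergence in law is equivalent to convergence of the cumulants.

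\textbf{Construction of the separable field.} Let $f_1,f_2$ be the non-negative definite functions given by asymptotic separability of $C$. Normalize them so that $f_i(0)=1$. This is harmless, because $c_k(D;f)$ is invariant under $f\mapsto \lambda f$ for $\lambda>0$: numerator and denominator both scale by $\lambda^k$. If some $f_i(0)=0$, then $f_i\equiv 0$ and the product of the $c_k$ would be degenerate. I would exclude this, either by arguing it is incompatible with $c_2\equiv1$ or by tacitly requiring $f_i(0)>0$ in the definition.

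Set $C^{\textrm{Sep}}(x_1,x_2):=f_1(x_1)f_2(x_2)$. This is non-negative definite, being the covariance of an independent product, or equivalently a tensor product of positive-definite kernels. It is also unit-variance and stationary. Continuity of the field needs some regularity of $f_i$, say continuity at $0$ plus a mild modulus. I would either assume the $f_i$ are continuous covariance functions, as in the Gneiting application where $f_i=C_i$ or power-law surrogates, or replace "continuous" by a measurable version. Measurability is all that is needed to define $Y^{\textrm{Sep}}$ as an $L^2$ integral.

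\textbf{Cumulant identities.} By \eqref{eq:kappaktilde}, the $k$-th cumulant of $\widetilde Y_{t_1,t_2}$ is $2^{k-1}(k-1)!\,c_k(t_1D_1\times t_2D_2;C)$. Likewise, the $k$-th cumulant of $\widetilde Y^{\textrm{Sep}}_{t_1,t_2}$ is $2^{k-1}(k-1)!\,c_k(t_1D_1\times t_2D_2;C^{\textrm{Sep}})$. By Fubini, the cyclic integrals factor for the product covariance, so
\begin{equation*}
c_k(t_1D_1\times t_2D_2;C^{\textrm{Sep}})=\prod_{i=1,2}c_k(t_iD_i;f_i).
\end{equation*}
Asymptotic separability then gives that, for every $k\ge2$, the difference of the $k$-th cumulants of $\widetilde Y_{t_1,t_2}$ and $\widetilde Y^{\textrm{Sep}}_{t_1,t_2}$ tends to $0$. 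Both families have uniformly bounded cumulants of every order, by the H\"older remark after \eqref{eq:ck}, so their moments are uniformly bounded as well.

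\textbf{Transfer of the limit.} Suppose $\widetilde Y^{\textrm{Sep}}_{t_1,t_2}\convlaw Z$. The variables live in a fixed (second) chaos, so hypercontractivity gives uniform integrability of all powers. Hence the moments, and therefore the cumulants, of $\widetilde Y^{\textrm{Sep}}$ converge to those of $Z$. By the previous step, the cumulants of $\widetilde Y_{t_1,t_2}$ converge to the same limits.

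The main obstacle is concluding convergence in law from this: one needs the limit $Z$ to be moment-determinate. Since $Z$ is a limit in law of second-chaos variables with bounded variance, it is again in the closure of the second chaos, or is Gaussian. By \cref{fredholm does the job}, or more concretely because its exponential moments are finite near $0$ thanks to hypercontractivity, it is determined by its moments.

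Tightness of $\widetilde Y_{t_1,t_2}$ follows from unit variance. Any subsequential limit has, by uniform integrability, the same moments as $Z$, hence equals $Z$ in law. The converse direction is identical with the roles of the two fields exchanged.
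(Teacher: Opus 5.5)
Your proposal is correct and follows the paper's route. You express the cumulants of the standardized second-chaos variables through $c_k$, factor them for $C^{\textrm{Sep}}=f_1\otimes f_2$, and transfer the limit by cumulant convergence in the second chaos. Two small points. The $k$-dependent constant is actually $2^{k-1}(k-1)!\,2^{-k/2}$ rather than the one you took from \eqref{eq:kappaktilde}, but this is harmless since it is the same for both fields. Where the paper simply cites Nourdin--Poly for ``cumulant convergence $\Leftrightarrow$ convergence in law'', you prove it directly via hypercontractivity and moment-determinacy of $Z$. You also rightly work with differences of cumulants, which is what Definition~\ref{def:asympsep} supplies, rather than the paper's $\sim$, which would be problematic when the limiting cumulants vanish.
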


\begin{proof}
Since $\f=H_2$, the functional $Y_{t_1,t_2}$ belongs to the second chaos. Therefore, its distribution is determined by its moments or, equivalently, by the cumulants $\kappa_k$, for $k\ge 2$; moreover, for all $k\ge 2$, the following equality holds: \begin{equation*}
        \kappa_k (\widetilde Y_{t_1,t_2}) = 2^{k-1}(k-1)! \,2^{-k/2} \, c_k(t_1D_1\times t_2D_2;C),
    \end{equation*}
where $\widetilde Y$ denotes the standardization of $Y$; see \cite[Proposition 2.1.7.13]{bluebook}, or \cref{fredholm does the job}, below. If $C$ is asymptotically separable, we know that, as $t_1,t_2\to\infty$, \begin{equation*}
    \kappa_k (\widetilde Y_{t_1,t_2}) \sim 2^{k-1} (k-1)! \, 2^{-k/2} \, \prod_{i=1,2} c_k(t_iD_i;C_i) = \kappa_k (\widetilde Y^{\mathrm{Sep}}_{t_1,t_2}), \quad k\ge 2,
\end{equation*}
for some random variable $Y^{\mathrm{Sep}}$ subordinated to a Gaussian field with separable covariance function. Since a double integral converges in law to a r.v. $Z$ if and only if all its cumulants converge to those of $Z$, see \cite{nourdinPoly12}, they share the same limiting distribution. 
\end{proof}

\begin{remark}[Asymptotic separability on $p\ge2$ blocks]
    If $p\ge 3$, we say that $f:\R^d \to \R$ is \emph{asymptotically separable onto $p$ blocks} $\R^{d_1}\times \ldots \times \R^{d_p}$, such that $d_1+\ldots +d_p=d$, if the following conditions hold true: \begin{itemize}
        \item for some $i=1,\ldots,p$, $f:\R^d \to \R$ is asymptotically separable onto $\R^{d_i} \times ( \bigtimes_{\substack{j=1,\ldots,p,\\j\neq i}} \R^{d_j})$, with non-negative definite factors $f_i:\R^{d_i}\to \R$ and $\hat f_i:\bigtimes_{\substack{j=1,\ldots,p,\\j\neq i}} \R^{d_j}\to\R$;
        \item $\hat f_i:\bigtimes_{j=1,\ldots,p,\\j\neq i} \R^{d_j}\to\R$ is asymptotically separable onto $(p-1)$ blocks $\bigtimes_{\substack{j=1,\ldots,p,\\j\neq i}} \R^{d_j}$.
    \end{itemize}  
    By iteration, \cref{thm:equivalence} extends to $p$-domain functionals, as in \cite{pdom24}, \begin{equation*} 
    Y_{t_1,\ldots,t_p} = \int_{t_1D_1 \times \ldots \times t_pD_p} H_2(B_x) dx, \qquad t_1,\ldots,t_p>0,
    \end{equation*}
    where $B=(B_x)_{x\in\R^d}$ is a real-valued, continuous, centered and unit-variance, stationary Gaussian field on $\R^d$, and its covariance function $C:\R^d\to\R$ is asymptotically separable onto $p$ blocks with non-negative definite factors $C_i:\R^{d_i}\to\R$, $i=1,\ldots,p$. In this case, $C^\textrm{Sep} = C_1\otimes\ldots\otimes C_p$.
\end{remark}

\begin{remark} The cumulants of any multiple integral $F=I_q(f)$, for some symmetric kernel $f$, $q\in \N$, satisfy a formula analogous to \cref{eq:kappaktilde}: $\kappa_1(F)=0$, $\kappa_2(F)=q!\|f\|^2$, and for $n\ge 3$ we have \begin{equation*}
    \kappa_n (f) = q! (n-1)! \sum c_q(r_2,\ldots,r_{n-1}) \scp{(\ldots ((f\widetilde\otimes_{r_2} f)\widetilde\otimes_{r_3} f) \ldots \widetilde\otimes f_{r_{m-2}})\widetilde\otimes_{r_{m-1}} f , f}_{\mathfrak H^{\otimes q}},
\end{equation*}
where the sum ranges over collections of integers $r_2,\ldots,r_{n-1}$, and $c_q(r_2,\ldots,r_{n-1})$ are explicit combinatorial constants; see \cite[Theorem 8.4.4]{bluebook} for details. Then, one would be tempted to introduce a higher-order asymptotic separability definition, and conclude that under the latter condition, \cref{thm:equivalence} continues to hold for functionals with Hermite rank $q\ge 3$. However, random variables in higher chaoses are not determined by their moments, and, equivalently, by their cumulants. See e.g. \cite{slud1993}. Then, a complete characterization of the limiting distribution for functionals with Hermite rank $R\ge 3$ is out of the scope of the present method. 
\end{remark}

\begin{remark}\label{rem:asympsep}
 \cref{thm:ltBREUERMAJOR,thm:ltBALTO,thm:ltROSENBLATT} imply also that Gneiting covariance functions are asymptotically separable in the sense of \cref{def:asympsep}. More precisely, let $C$ be a Gneiting covariance function with factors $C_1$ and $C_2$, as in \cref{eq:Cgneiting}. Then,
    \begin{itemize}
        \item if $C_1\in L^2(\R^{d_1})$, we have asymptotic separability with $C_1\otimes C_2^*$, where $C_2^*$ is a regularly varying covariance function with parameter $-\rho_2/2$;
        \item if $C_1\notin L^2(\mathbb R^{d_1})$ and, in addition, it is regularly varying with parameter $-\rho_1<0$ such that $2\rho_1<d_1$,  
        we have a.s. with $C_1\otimes C_2^*$, where $C_2^*$ is a regularly varying covariance function with parameter $-\rho_2({1-\rho_1/d_1})$.
    \end{itemize}
\end{remark}
    
\subsection{Plan of the paper} 

The structure of the paper is as follows. \cref{sec:prelim} is devoted to preliminary results: in \cref{sec:malliavinstein}, we introduce the framework of chaotic decompositions and the Nualart-Peccati Fourth Moment Theorem; in \cref{sec:rosenblatt}, we give explicit expressions for the characteristic functions of of Rosenblatt-type and $p$-domain Rosenblatt distributions; in \cref{sec:prelim misc}, we recall some standard results on regularly varying functions and covariograms, that we will extensively use in the following. \cref{sec:proof} is devoted to the proofs. In \cref{sec:range dep}, we investigate the property of short and long range dependance of the field, and we state and prove the variance growth rate in \cref{sec:variance}. Then, in \cref{subsec:proofLT} we prove our main results. Finally, Appendix \ref{sec:auxiliaryRV} collect a series of results on regularly varying functions integrated on growing domains.

\section{Preliminaries}\label{sec:prelim}

\subsection{Chaotic decompositions and fourth moment theorems}

\label{sec:malliavinstein}

In this subsection, we briefly recall the framework of isonormal Gaussian processes and Wiener chaoses, and explain how the Nualart--Peccati Fourth Moment Theorem can be used to establish
convergence in distribution for random variables living in a fixed Wiener chaos. We refer e.g. to
\cite{bluebook} for a comprehensive treatment.

\subsubsection{A Hilbert space associated with the covariance kernel.}
Let $B=(B_x)_{x\in\mathbb R^d}$ be a real-valued, continuous, centered, stationary Gaussian field
with unit variance and covariance function \begin{equation*}
    C(x-y)=\mathrm{Cov}(B_x,B_y), \qquad x,y\in\mathbb R^d.
\end{equation*}
We associate with $C$ a reproducing kernel Hilbert space (RKHS) $\mathcal H$, defined as follows. Let \begin{equation*}
    \mathcal H_0 := \mathrm{span}\{\,e_x(\cdot):=C(\cdot-x)\ :\ x\in\mathbb R^d\,\},
\end{equation*}
and endow $\mathcal H_0$ with the inner product uniquely determined by \begin{equation*}
    \langle e_x, e_y\rangle_{\mathcal H} := C(x-y), \qquad x,y\in\mathbb R^d,
\end{equation*}
extended bilinearly. Since $C$ is positive definite, this inner product is well defined. We denote
by $\mathcal H$ the completion of $\mathcal H_0$ with respect to the induced norm. By construction,
$\mathcal H$ is a real separable Hilbert space with reproducing kernel $C$. Now, consider an isonormal Gaussian process over $\mathcal H$, $X=\{X(h):h\in\mathcal H\}$,
that is, a centered Gaussian family satisfying \begin{equation*}
    \mathbb E[X(h)X(g)] = \langle h,g\rangle_{\mathcal H}, \qquad h,g\in\mathcal H.
\end{equation*}
With this construction, $(X(e_x))_{x\in\R^d}$ defines a Gaussian fields with the same distribution of $(B_x)_{x\in\R^d}$.
In what follows, we work within this representation.

\subsubsection{Wiener chaoses and multiple integrals.}
For $q\in\N$, let $H_q$ denote the $q$-th Hermite polynomial. For $q\in\N$, the $q$-th Wiener chaos $\mathcal H_q$
associated with $X$ is defined as the closed linear subspace of $L^2(\Omega)$ generated by random
variables of the form $H_q(X(h))$, with $h\in\mathcal H$ satisfying $\|h\|_{\mathcal H}=1$. One has
the orthogonal decomposition \begin{equation*}
    L^2(\Omega)=\bigoplus_{q=0}^{\infty} \mathcal H_q,
\end{equation*}
known as the Wiener chaos decomposition. For $q\in\N$, the multiple Wiener--It\^o integral of order $q$, denoted by $I_q$, can be first defined on $h^{\otimes q}$, $\|h\|_{\mathcal H}=1$, by \begin{equation*}
    I_q(h^{\otimes q}) := H_q(X(h)), \qquad \|h\|_{\mathcal H}=1,
\end{equation*}
and then extended by linearity and density (of ${\rm span}\{h^{\otimes q}\}$ in $\mathcal H^{\odot q}$)  to a linear isomorphism (up to factor $\sqrt{q!}$) from the symmetric tensor product
$\mathcal H^{\odot q}$ onto $\mathcal H_q$,  satisfying \begin{equation*}
    \mathbb E[I_q(h^{\otimes q})I_q(g^{\otimes q})]=q! \, \scp{h,g}^q_{\mathcal H}, \qquad \forall h,g \in\mathcal{H^{\odot  q}}\,.
\end{equation*}
In particular, for any measurable set $D\subset\mathbb R^d$ with finite Lebesgue measure, the random
variable \begin{equation*}
    \int_D H_q(B_x)\,dx
\end{equation*}
belongs to the $q$-th Wiener chaos. Indeed, since $B_x \eqlaw X(e_x)$, defining
\begin{equation}
    \label{eq: fD}
f_{D,q} := \int_D e_x^{\otimes q}\,dx  \in\ \mathcal H^{\odot q},
\end{equation}
we have \begin{equation*}
    \int_D H_q(B_x)\,dx \eqlaw I_q(f_{D,q}).
\end{equation*}
Moreover, the orthogonal decomposition of $\varphi\in L^2(\gamma)$, where $\gamma$ denotes the standard Gaussian measure in $\R$, with respect to Hermite polynomials, written $\{H_q:q\ge 1\}$, induces a Wiener chaos decomposition for functionals of the form $Y=\int_D\varphi(B_x)\,dx \in L^2(\Omega)$, of the form
\begin{equation}
\label{eq: Y deco}
    Y=\int_D\varphi(B_x)\,dx \overset{L^2(\Omega)}{=}\sum_{q\in \N}Y[q]\,\,,\qquad Y[q]:=a_q\int_D H_q(B_x)\,dx \overset{d}{=} I_q(a_q f_{D,q})\,.
\end{equation}
We will refer to $Y[q]$ as the $q$th chaotic component of $Y$. For the variance, we have \begin{equation*}
    \Var(Y) = \sum_{q\in\N}\Var(Y[q]) = \sum_{q\in\N}q!a_q^2\int_D\int_DC(x-y)^q\,dx\,dy\,.
\end{equation*}
\subsubsection{Contractions and the Fourth Moment Theorem.}
For $q\in\mathbb N$, $r=1,\ldots,q-1$ and $h,g$ symmetric functions with unit norm in $\mathcal H$, we can define the $r$th contraction of $h^{\otimes q}$ and $g^{\otimes q}$ as the (generally non-symmetric) element of $\mathcal H^{\otimes 2q-2r}$ given by \begin{equation*}
    h^{\otimes q} \otimes_r g^{\otimes q} = \scp{h,g}_{\mathcal H}^{r} \,\,h^{\otimes q-r} \otimes g^{\otimes q-r}.
\end{equation*}
We then extend the definition of contraction to every pair of elements in $\mathcal H^{\odot q}$. We will denote the norm in this space by $\|\cdot\|_q$. A sharp criterion for asymptotic normality inside a fixed Wiener chaos is provided by the following result, the celebrated Nualart--Peccati Fourth Moment Theorem, see
\cite{cltNuaPec}.

\begin{theorem}[Fourth Moment Theorem]
\label{thm:4th}
Let $q\ge2$ and let $(h_t)_{t>0}\subset \mathcal H^{\odot q}$ be such that \begin{equation*}
    \mathbb E[I_q(h_t)^2]\longrightarrow 1 \qquad \text{as } t\to\infty.
\end{equation*}
Then the following statements are equivalent:
\begin{enumerate}
\item $I_q(h_t)$ converges in distribution to a standard Gaussian random variable;
\item $\mathbb E[I_q(h_t)^4]\longrightarrow 3$;
\item for every $r=1,\dots,q-1$, $\|h_t\otimes_r h_t\|_{2q-2r} \longrightarrow 0$ as $t\to\infty$.
\end{enumerate}
\end{theorem}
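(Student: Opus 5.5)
The plan is to prove the cycle of implications $1\Rightarrow 2\Rightarrow 3\Rightarrow 1$. Write $F_t:=I_q(h_t)$ and $\sigma_t^2:=\E[F_t^2]=q!\|h_t\|_q^2\to1$. Throughout I use the product formula for multiple integrals,
\begin{equation*}
I_q(f)I_q(g)=\sum_{r=0}^{q} r!\binom{q}{r}^2 I_{2q-2r}(f\widetilde\otimes_r g),
\end{equation*}
together with the isometry and orthogonality of the chaoses. These are the only structural inputs I need beyond the definitions recalled above.

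\textbf{Step 1: the fourth-moment identity.} Apply the product formula with $f=g=h_t$ to get the chaos expansion of $F_t^2$. Then compute $\E[F_t^4]=\E[(F_t^2)^2]$ chaos by chaos. The $r=q$ term gives $\sigma_t^4$. The $r=0$ term $\|h_t\widetilde\otimes h_t\|^2$ must be expanded by symmetrizing the tensor product; this yields $2\sigma_t^4$ plus a sum of $\|h_t\otimes_r h_t\|^2$. One arrives at the classical identity
\begin{equation*}
\E[F_t^4]-3\sigma_t^4=\frac{3}{q}\sum_{r=1}^{q-1} r\,(r!)^2\binom{q}{r}^4(2q-2r)!\,\|h_t\widetilde\otimes_r h_t\|^2_{2q-2r}+\sum_{r=1}^{q-1}(\text{positive constant})\,\|h_t\otimes_r h_t\|^2_{2q-2r}.
\end{equation*}
Every term on the right is nonnegative, and $\|h_t\widetilde\otimes_r h_t\|\le\|h_t\otimes_r h_t\|$. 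Hence $\E[F_t^4]\to3$ if and only if $\|h_t\otimes_r h_t\|\to0$ for all $1\le r\le q-1$, which gives $2\Leftrightarrow3$. The main obstacle is the bookkeeping in the symmetrization of $h_t\otimes h_t$. I would handle it by summing over the number $r$ of coordinates of the first factor that are permuted into the second block. The term with index $r$ then equals $\binom{q}{r}^2\langle h_t\otimes_r h_t,h_t\otimes_{q-r}h_t\rangle$, and $\langle h_t\otimes_r h_t,h_t\otimes_{q-r}h_t\rangle=\|h_t\otimes_r h_t\|^2$.

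\textbf{Step 2: $1\Rightarrow2$.} The $F_t$ lie in a fixed chaos, so hypercontractivity gives $\E|F_t|^p\le (p-1)^{pq/2}\sigma_t^p$ for every $p\geq2$. In particular $\sup_t\E[F_t^6]<\infty$, so $(F_t^4)$ is uniformly integrable. Convergence in law to $\mathcal N(0,1)$ then implies $\E[F_t^4]\to3$.

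\textbf{Step 3: $3\Rightarrow1$.} I would use the Malliavin--Stein bound
\begin{equation*}
d_{TV}(F_t,\mathcal N(0,1))\le 2\,\E\bigl|1-q^{-1}\|DF_t\|^2_{\mathcal H}\bigr|,
\end{equation*}
which comes from Stein's equation combined with the integration by parts $\E[F_tg(F_t)]=\E[g'(F_t)q^{-1}\|DF_t\|^2]$ in the $q$th chaos. The derivative is $DF_t=qI_{q-1}(h_t)$. Applying the product formula to $q^{-1}\|DF_t\|^2$ gives $\sigma_t^2$ plus a sum of terms $I_{2q-2r}(h_t\widetilde\otimes_r h_t)$ with $1\le r\le q-1$. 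Therefore $\Var(q^{-1}\|DF_t\|^2)$ is bounded by a constant times $\sum_r\|h_t\otimes_r h_t\|^2$. Under condition 3 this variance tends to $0$, and since also $\sigma_t^2\to1$, Cauchy--Schwarz gives $d_{TV}\to0$ and hence convergence in law. This closes the cycle. (Alternatively, $2\Rightarrow1$ can be done via the method of moments, but Step 3 is cleaner.)
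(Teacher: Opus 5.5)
The paper does not prove this statement: it quotes it from Nualart--Peccati \cite{cltNuaPec} and uses it as a black box, so there is no internal argument to compare against. Your cycle $1\Rightarrow2\Leftrightarrow3\Rightarrow1$ is correct, and it is essentially the modern textbook proof (as in \cite{bluebook}).

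\textbf{Comparison with the original argument.} Two of your steps coincide with the original argument of \cite{cltNuaPec}:
\begin{itemize}
\item $1\Rightarrow2$, via hypercontractivity plus uniform integrability;
\item $2\Leftrightarrow3$, via the explicit fourth-moment formula obtained by symmetrizing $h_t\otimes h_t$.
\end{itemize}
The difference lies in $3\Rightarrow1$. Nualart and Peccati proved it by stochastic calculus: they represent $I_q(h_t)$ as an It\^o integral in the Brownian setting, then use a Dambis--Dubins--Schwarz time change to reduce to $L^2$-convergence of the bracket. Your Malliavin--Stein bound avoids the embedding into Brownian motion and works for an arbitrary isonormal process. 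It also gives a quantitative total-variation bound in terms of $|1-\sigma_t^2|$ and the contraction norms, which is more than the statement needs.

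\textbf{A correction to Step 1.} The identity you display double counts. The sum $\frac3q\sum_{r=1}^{q-1} r(r!)^2\binom{q}{r}^4(2q-2r)!\,\|h_t\widetilde\otimes_r h_t\|^2$ is, by itself, already equal to $\E[F_t^4]-3\sigma_t^4$. For instance, for $q=2$ it gives exactly $48\|h_t\otimes_1 h_t\|^2$, which is the true fourth cumulant. Adding a further positive multiple of $\sum_r\|h_t\otimes_r h_t\|^2$ therefore makes the formula false.

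What your bookkeeping actually yields, via
\[
(2q)!\,\|h_t\widetilde\otimes h_t\|^2=(q!)^2\sum_{r=0}^{q}\binom{q}{r}^2\|h_t\otimes_r h_t\|^2,
\]
is the identity
\[
\E[F_t^4]-3\sigma_t^4=\sum_{r=1}^{q-1}(r!)^2\binom{q}{r}^4(2q-2r)!\,\|h_t\widetilde\otimes_r h_t\|^2+(q!)^2\sum_{r=1}^{q-1}\binom{q}{r}^2\|h_t\otimes_r h_t\|^2 .
\]

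Relatedly, the pairing $\langle h_t\otimes_r h_t,h_t\otimes_{q-r}h_t\rangle$ is ill-typed for $r\neq q/2$, since the two factors live in $\mathcal H^{\otimes(2q-2r)}$ and $\mathcal H^{\otimes 2r}$. A permutation moving $r$ coordinates across the blocks contributes $\|h_t\otimes_{q-r}h_t\|^2=\|h_t\otimes_r h_t\|^2$, and there are $(q!)^2\binom{q}{r}^2$ such permutations.

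All coefficients are positive either way, so your conclusion $2\Leftrightarrow3$ stands once the identity is written correctly.
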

\noindent
Remark that when $h = f_{D,q}$, $q\ge1$, for some domain $D\subset \R^d$, as in \eqref{eq: fD}, we may write, for any $r=1,\ldots,q-1$, \begin{equation*}
    \|h\otimes_r h\|^2_{2q-2r} = a_q^4 \, \int_{D^4} C(x-y)^{q-r} C(y-z)^{r} C(z-v)^{q-r} C(v-x)^{r} \, dx\, dy\, dz \, dv.
\end{equation*} 
When a finite-chaos domination does not occur, 
one may rely on the following result,
which provides sufficient conditions for asymptotic normality starting from a possibly
infinite chaos decomposition; see, for instance, \cite[Theorem~6.3.1]{bluebook}.

\begin{theorem}\label{thm 631}
Let $(F_t)_{t\ge1}$ be a sequence in $L^2(\Omega)$ such that $\mathbb{E}[F_t]=0$
for all $t$. Consider the chaos expansion \begin{equation*}
    F_t = \sum_{q=1}^{\infty} I_q(f_{t,q}),
\qquad f_{t,q} \in \mathcal{H}^{\odot q}, \; q \ge 1,
\end{equation*}
and suppose in addition that:
\begin{enumerate}
\item for every fixed $q \ge 1$, there exists $\lim_{t\rightarrow\infty} q!\,\|f_{t,q}\|^2_{\mathcal{H}^{\otimes q}} \longrightarrow \sigma_q^2\ge0$ as $t\to \infty;$
\item $\sigma^2 := \sum_{q=1}^{\infty} \sigma_q^2 < \infty$;
\item for all $q \ge 2$ and $r = 1,\dots,q-1$, $\|f_{t,q} \otimes_r f_{t,q}\|_{\mathcal{H}^{\otimes (2q-2r)}}
\longrightarrow 0$ as $t \to \infty$;
\item $\lim_{N \to \infty}
\sup_{t \ge 1}
\sum_{q=N+1}^{\infty}
q!\,\|f_{t,q}\|^2_{\mathcal{H}^{\otimes q}}
= 0$.
\end{enumerate}
Then, $F_t \convlaw \mathcal{N}(0,\sigma^2)$ as $t\to\infty$.
\end{theorem}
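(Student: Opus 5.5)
The final statement is \cref{thm 631}, a standard result quoted from \cite[Theorem~6.3.1]{bluebook}. I would prove it by a truncation argument that reduces to the multivariate version of the Fourth Moment Theorem (Peccati--Tudor).

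\textbf{Truncation.} For $N\in\N$, set
\begin{equation*}
F_t^{N}:=\sum_{q=1}^{N} I_q(f_{t,q}).
\end{equation*}
By orthogonality of the chaoses,
\begin{equation*}
\E[(F_t-F_t^N)^2]=\sum_{q>N} q!\,\|f_{t,q}\|^2 .
\end{equation*}
Condition 4 makes this quantity small uniformly in $t$ once $N$ is large.

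\textbf{Fixed $N$.} I would show that the vector $(I_1(f_{t,1}),\dots,I_N(f_{t,N}))$ converges in law to a centered Gaussian vector with diagonal covariance $\mathrm{diag}(\sigma_1^2,\dots,\sigma_N^2)$.
\begin{itemize}
\item The covariances are diagonal because distinct chaoses are orthogonal.
\item The variances converge by condition 1.
\item Each component with $q\ge2$ is asymptotically normal. This follows from condition 3 and \cref{thm:4th}, after rescaling by $\sigma_q$ when $\sigma_q>0$. When $\sigma_q=0$, the component tends to zero in $L^2$ and the claim is trivial.
\item The component $I_1(f_{t,1})$ is exactly Gaussian.
\end{itemize}
The Peccati--Tudor theorem says that, inside a finite sum of fixed chaoses with converging covariance, componentwise Gaussian convergence implies joint convergence. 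Applying it gives $F_t^N\convlaw\mathcal N\bigl(0,\sum_{q\le N}\sigma_q^2\bigr)$.

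\textbf{Conclusion.} I would work with characteristic functions. For fixed $\lambda$,
\begin{align*}
\bigl|\E e^{i\lambda F_t}-e^{-\lambda^2\sigma^2/2}\bigr|
&\le |\lambda|\,\|F_t-F_t^N\|_{L^2}
+\Bigl|\E e^{i\lambda F_t^N}-e^{-\lambda^2\sum_{q\le N}\sigma_q^2/2}\Bigr| \\
&\quad +\Bigl|e^{-\lambda^2\sum_{q\le N}\sigma_q^2/2}-e^{-\lambda^2\sigma^2/2}\Bigr| .
\end{align*}
Take $\limsup_{t\to\infty}$ first and then let $N\to\infty$.
\begin{itemize}
\item The first term vanishes by condition 4.
\item The second term vanishes by the fixed-$N$ step.
\item The third term vanishes by condition 2.
\end{itemize}
Lévy's continuity theorem then gives $F_t\convlaw\mathcal N(0,\sigma^2)$.

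\textbf{Main obstacle.} The key step is the joint convergence at fixed $N$, that is, the Peccati--Tudor theorem. If I had to prove it rather than cite it, I would use the Malliavin--Stein bound $d\bigl(F,\mathcal N(0,C)\bigr)\lesssim \E\bigl\|C-(\langle DF_i,-DL^{-1}F_j\rangle)_{ij}\bigr\|$. I would then control each entry by contraction norms, invoking condition 3 for the diagonal entries. For the off-diagonal entries I would use Cauchy--Schwarz bounds on mixed contractions $f_{t,p}\otimes_r f_{t,q}$ in terms of $\|f_{t,p}\otimes_{p-r}f_{t,p}\|$ and $\|f_{t,q}\otimes_{q-r}f_{t,q}\|$.
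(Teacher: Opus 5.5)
Your proposal is correct and follows essentially the same route as the standard proof of this result in \cite[Theorem~6.3.1]{bluebook}, which the paper quotes without proof. That route is: truncate the chaos expansion at level $N$, get joint Gaussian convergence of $(I_1(f_{t,1}),\dots,I_N(f_{t,N}))$ from \cref{thm:4th} combined with the Peccati--Tudor theorem, and conclude via characteristic functions, using condition 4 to control the tail $\E[(F_t-F_t^N)^2]$ uniformly in $t$ and condition 2 for the convergence of the truncated variances.
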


\subsubsection{Cyclic products, and characteristic functions in the second chaos.}
\label{subsec:fredholmappears}

\begin{definition}[Cyclic product]
For $f:\R^d\to\R$ and $k\ge2$ we define the \emph{cyclic product}
\begin{equation}
\label{def:cyclic}
f^{\circ k}(x^{(1)},\dots,x^{(k)})
:=
\prod_{i=1}^k f\left(x^{(i)}-x^{(i+1)}\right),
\qquad x^{(k+1)}:=x^{(1)}.
\end{equation}
\end{definition}
\noindent
We refer to \Cref{sec:auxiliaryRV} for further extensions and properties. As a consequence of \cite[Proposition 2.7.13]{bluebook}, we have the following.

\begin{proposition}
\label{fredholm does the job}
    Recall \eqref{eq: fD}. The characteristic function of $I_2(f_{D,2})$ admits the following representation \begin{equation*}
        \phi(\xi) = \exp\left\{ \frac12 \sum_{m=2}^{\infty} \frac{(2i\xi)^m}{m} c_m \right\},
    \end{equation*}
    where  \begin{equation*}
        c_m = \int_{D^m} C^{\circ m} (x_1,\ldots, x_m) \, dx_1 \ldots dx_k.
    \end{equation*}
    Moreover, its distribution is moment-determined, and the cumulants of $I_2(f_{D,2})$, written $\{\kappa_k: k\ge 1\}$ are $\kappa_1 = 0$, and, for $k\ge 2$,
    \begin{equation*}
        \kappa_k = 2^{k-1}(k-1)! \, c_k.
    \end{equation*}
\end{proposition}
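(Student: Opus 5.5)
The plan is to read everything off the spectral decomposition of the Hilbert--Schmidt operator attached to the kernel. Set $D\subset\R^d$ bounded and let $T_D$ be the integral operator on $L^2(D)$ with kernel $C(x-y)$. Since $C$ is continuous and positive definite, $T_D$ is positive, self-adjoint and, by Mercer's theorem, trace class. Let $(\lambda_j)_{j\ge1}$ be its eigenvalues and $(u_j)$ an orthonormal eigenbasis, so that
\begin{equation*}
C(x-y)=\sum_j \lambda_j u_j(x)u_j(y)
\end{equation*}
uniformly on $D\times D$. The first step is to diagonalise $f_{D,2}=\int_D e_x^{\otimes 2}\,dx$. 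I would set $g_j:=\lambda_j^{-1/2}\int_D u_j(x)e_x\,dx\in\mathcal H$ for $\lambda_j>0$. Then $\langle g_j,g_k\rangle_{\mathcal H}=\lambda_j^{-1/2}\lambda_k^{-1/2}\langle T_Du_j,u_k\rangle=\delta_{jk}$, and $f_{D,2}=\sum_j\lambda_j g_j\otimes g_j$. This gives
\begin{equation*}
I_2(f_{D,2})=\sum_j \lambda_j\,(N_j^2-1),
\end{equation*}
where $N_j:=X(g_j)$ are i.i.d.\ standard Gaussians. The series converges in $L^2(\Omega)$ because $\sum_j\lambda_j^2<\infty$. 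This is the content of \cite[Proposition 2.7.13]{bluebook}, and I would simply invoke it there.

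The second step computes the characteristic function factor by factor. Using $\E[e^{i\xi\lambda(N^2-1)}]=e^{-i\xi\lambda}(1-2i\xi\lambda)^{-1/2}$ and, for $|\xi|<1/(2\max_j\lambda_j)$, the expansion $-\tfrac12\log(1-2i\xi\lambda)-i\xi\lambda=\tfrac12\sum_{m\ge2}\frac{(2i\xi\lambda)^m}{m}$, I would sum over $j$. Interchanging the sums is legitimate by absolute convergence, since $\sum_j\lambda_j^m\le(\max_j\lambda_j)^{m-2}\sum_j\lambda_j^2$. It then remains to identify $\sum_j\lambda_j^m=\mathrm{Tr}(T_D^m)=\int_{D^m}C^{\circ m}$ for $m\ge2$. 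This follows by iterating the kernel: $T_D^m$ has kernel $\int_{D^{m-1}}C(x-z_1)\cdots C(z_{m-1}-y)$, and the trace of a trace-class operator with continuous kernel equals the integral of the kernel on the diagonal. Reading off the coefficients of $(i\xi)^k/k!$ in the cumulant generating function $\tfrac12\sum_m 2^m c_m (i\xi)^m/m$ gives $\kappa_k=2^{k-1}(k-1)!\,c_k$ for $k\ge2$. Also $\kappa_1=0$.

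The third step is moment determinacy. The function $\phi$ is analytic in the strip $|\mathrm{Im}\,\xi|<1/(2\max_j\lambda_j)$, so $\E[e^{s|I_2|}]<\infty$ for small $s$, and a distribution with a finite exponential moment is determined by its moments. Equivalently, the cumulant bound $|\kappa_k|\le 2^{k-1}(k-1)!\,(\max_j\lambda_j)^{k-2}\sum_j\lambda_j^2$ shows that the moment sequence satisfies Carleman's condition.

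The main obstacle, such as it is, lies in the identification $\mathrm{Tr}(T_D^m)=\int_{D^m}C^{\circ m}$ and in the legitimacy of the eigen-expansion. For $m\ge2$ the operator $T_D^m$ is a product of Hilbert--Schmidt operators, so the trace equals $\langle T_D^{m-1}\text{-kernel},\,C\rangle_{L^2(D^2)}$. That identity needs only Fubini and $C$ bounded, so even the $m=2$ case, $\sum\lambda_j^2=\int_{D^2}C^2$, is just Parseval. I would use this Hilbert--Schmidt route to avoid relying on Mercer's theorem beyond continuity.
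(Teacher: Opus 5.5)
Your proposal is correct and follows the paper's route. The paper's proof consists only of the citation of \cite[Proposition 2.7.13]{bluebook}, and your argument unpacks it: the diagonalisation $I_2(f_{D,2})=\sum_j\lambda_j(N_j^2-1)$, the product of the $\chi^2$ characteristic functions, and the identification $\sum_j\lambda_j^m=\mathrm{Tr}(T_D^m)=\int_{D^m}C^{\circ m}$, which the paper leaves implicit. Your restriction of the series representation to $|\xi|<1/(2\max_j\lambda_j)$ is also the right reading of the statement, which the paper does not spell out.
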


\subsection{Rosenblatt distributions}
\label{sec:rosenblatt}

In this section, we recall the definition of $p$-domain Rosenblatt random variables, as in \cite{pdom24}. We also provide explicit expressions for their characteristic function.

\subsubsection{$2$-domain Rosenblatt distribution}
\label{def:2domainRosenblatt}

For $i=1,2$, consider a convex body $D_i\in \R^{d_i}$, and $d:=d_1+d_2$. Consider positive parameters $\alpha,\beta \in \R$ such that $\alpha\in (0,d_1/2)$ and $\beta\in(0,d_2/2)$. Let $H=H_{2,D_1\times D_2}^{\alpha,\beta}$ be a $2$-domain Rosenblatt random variables, see \cite[Remark 1.5, Theorem 1.6]{pdom24}\footnote{When $R=2$, in \cite[Theorem 1.6]{pdom24}, we may relax the hypothesis on the existence of an absolutely continuous spectral measure. See \cref{rem: no spectral reg var}.}.

\begin{proposition}
\label{prop:rosenblatt characteristic}
The characteristic function of a $2$-domain Rosenblatt random variable $H$ with positive parameters $\alpha,\beta\in\R$ such that $\alpha\in (0,d_1/2)$ and $\beta\in(0,d_2/2)$, and convex bodies $D_i\subset \R^{d_i}$, $i=1,2$, is \begin{equation*}
    \varphi(\xi) = \E[\exp(i\xi H)] = \exp\left(\frac12\sum_{k=2}^\infty \frac{(2i\xi)^k}{k} c_k^{\alpha,\beta} \right),
\end{equation*}
where \begin{multline}\label{eq:ckRosen}
    c_k^{\alpha,\beta} = (2\sigma^2)^{-k/2} \, \int_{(D_1)^k} \| x_1-x_2 \|^{-\alpha} \ldots \|x_k-x_1\|^{-\alpha} \, dx_1 \ldots dx_k \\
    \times \int_{(D_2)^k} \| y_1-y_2 \|^{-\beta} \ldots \|y_k-y_1\|^{-\beta} \, dy_1 \ldots dy_k,
\end{multline}   
and 
\begin{equation*}
    \sigma^2 := \int_{(D_1)^2} \| x_1-y_1 \|^{-2\alpha} dx_1dy_1 \times \int_{(D_2)^2} \| x_2-y_2 \|^{-2\beta} dx_2dy_2.
\end{equation*}
Moreover, its distribution is moment-determined, and the cumulants of $X$, written $\{\kappa_k: k\ge 1\}$ are $\kappa_1 = 0$, $\kappa_2(H)=1$, and for $k\ge 3$, $\kappa_k(H) = 2^{k-1}(k-1)! c_k^{\alpha,\beta}$. 
\end{proposition}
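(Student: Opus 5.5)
The plan is to realize $H$ as a limit of second-chaos variables and then read off its characteristic function from \cref{fredholm does the job}.

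\textbf{Step 1 (definition and representation).} By \cite[Theorem 1.6]{pdom24}, $H=H^{\alpha,\beta}_{2,D_1\times D_2}$ is the distributional limit of $\widetilde Y^{\mathrm{Sep}}_{t_1,t_2}$. Here the field has separable covariance $C_1\otimes C_2$, with $C_1(x_1)=(1+\|x_1\|^2)^{-\alpha/2}$ and $C_2(x_2)=(1+\|x_2\|^2)^{-\beta/2}$. These are Cauchy covariances with spectral densities regularly varying at $0$, so the spectral hypotheses of \cite{pdom24} hold; see \cref{rem: no spectral reg var}. Equivalently, $H$ is a double Wiener--It\^o integral $I_2(g)$ with a kernel $g$ built from the spectral measures. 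In either description $H$ lies in the second chaos, so \cref{fredholm does the job} applies to it (through the trace representation $\kappa_k=2^{k-1}(k-1)!\,\mathrm{Tr}(A_g^k)$ of the associated Hilbert--Schmidt operator $A_g$). This gives moment determinacy at once.

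\textbf{Step 2 (cumulants of the approximants).} For $Y_t=\int_{t_1D_1\times t_2D_2}H_2(B_x)\,dx$, \cref{fredholm does the job} gives
\[
\kappa_k(\widetilde Y_t)=2^{k-1}(k-1)!\,\frac{c_k(t)}{\Var(Y_t)^{k/2}},
\]
where $\Var(Y_t)=2\int C^2$. By separability, the cyclic integral factorizes:
\[
c_k(t)=\int_{(t_1D_1)^k}C_1^{\circ k}\times\int_{(t_2D_2)^k}C_2^{\circ k}.
\]
Changing variables $x\mapsto t_1x$ turns the first factor into $t_1^{kd_1}\int_{D_1^k}\prod_i C_1(t_1(x_i-x_{i+1}))$. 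Since $t_1^{\alpha}C_1(t_1 z)\to\|z\|^{-\alpha}$ pointwise, the factor is asymptotically $t_1^{k(d_1-\alpha)}\int_{D_1^k}\prod\|x_i-x_{i+1}\|^{-\alpha}$, and the second factor behaves in the same way with $\beta$ and $t_2$. The denominator behaves analogously with exponents $2\alpha$ and $2\beta$, giving $(2\sigma^2)^{k/2}t_1^{k(d_1-\alpha)}t_2^{k(d_2-\beta)}$. The powers of $t$ cancel, so $\kappa_k(\widetilde Y_t)\to 2^{k-1}(k-1)!\,c_k^{\alpha,\beta}$.

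\textbf{Main obstacle: justifying the limit in Step 2.} The pointwise limit must be passed through the integral by dominated convergence. The bound $C_1(t_1z)\le t_1^{-\alpha}\|z\|^{-\alpha}$ holds exactly for the Cauchy kernel. The remaining task is to show that the cyclic kernel $\prod_{i=1}^k\|x_i-x_{i+1}\|^{-\alpha}$ is integrable on $D_1^k$ when $\alpha<d_1/2$. I would do this by bounding the cyclic integral by a H\"older/Young-type estimate: writing it as $\mathrm{Tr}(K^k)$ with $K$ the Riesz-type operator of kernel $\|x-y\|^{-\alpha}\mathbbm 1_{D_1}$, the Hilbert--Schmidt property gives $\mathrm{Tr}(K^k)\le\|K\|_{HS}^k<\infty$, because $\int_{D_1^2}\|x-y\|^{-2\alpha}<\infty$ exactly when $2\alpha<d_1$. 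The same argument handles $\sigma^2<\infty$.

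\textbf{Step 3 (conclusion).} Second-chaos variables that converge in law have uniformly bounded moments of all orders by hypercontractivity. Hence the cumulants of $\widetilde Y_t$ converge to those of $H$, so $\kappa_k(H)=2^{k-1}(k-1)!\,c_k^{\alpha,\beta}$ for $k\ge3$, and $\kappa_2(H)=1$ since $c_2^{\alpha,\beta}=1/2$. The series $\log\varphi(\xi)=\sum_k\kappa_k(i\xi)^k/k!$ converges for small $|\xi|$ because $c_k^{\alpha,\beta}\le(\|K\|_{op}/\|K\|_{HS})^{k}$-type bounds hold. Substituting the cumulants gives exactly the displayed formula $\tfrac12\sum_k(2i\xi)^k c_k^{\alpha,\beta}/k$. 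Moment determinacy then follows from the exponential tails of second-chaos variables.
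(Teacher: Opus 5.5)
Your proposal is correct and follows essentially the paper's argument. You realize $H$ through \cite[Theorem 1.6]{pdom24} as the limit of standardized $H_2$-functionals of a field with separable covariance $C_1\otimes C_2$, compute their cumulants with the trace formula of \cref{fredholm does the job}, factorize the cyclic integrals, and pass to the limit after rescaling; the paper does this last step for general regularly varying factors via \cref{lem:cactus} and the variance asymptotics of \cite{Maini2024}. Your Cauchy choice is a convenient simplification: it satisfies the spectral hypotheses of \cite{pdom24} directly and gives the exact domination $t^{\alpha}C_1(tz)\le\|z\|^{-\alpha}$ instead of Potter bounds, while your hypercontractivity step makes explicit the passage from convergence in law to convergence of cumulants, which the paper leaves implicit.
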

\begin{remark}
    The factor $(2\sigma)^{-k/2}$ appearing in \cref{eq:ckRosen} is due to the fact that a $p$-domain Rosenblatt random variable, as defined in \cite{pdom24}, is normalised to have unit-variance.
\end{remark}
\begin{proof}[Proof of \cref{prop:rosenblatt characteristic}]
Let $B=\{B(x_1,x_2):(x_1,x_2)\in\R^{d_1}\times\R^{d_2}\}$ be a centered, unit-variance, stationary
Gaussian field with covariance functions $C=C_1\otimes C_2:\R^{d_1}\times \R^{d_2}\to\R$, where $C_1$ and $C_2$ are regularly varying at $\infty$ covariance functions with parameter $-\alpha\in(-d_1/2,0)$ and $-\beta\in(-d_2/2,0)$, respectively. Thanks to \cite[Theorem 1.6]{pdom24}\footnote{When $R=2$, we may relax the hypothesis on the existence of an absolutely continuous spectral measure. See \cref{rem: no spectral reg var}.}, as $t_1,t_2\to\infty$, 
\begin{equation*}
    Y(t_1,t_2) := \frac{\int_{t_1D_1\times t_2D_2} H_2(B_{x_1,x_2}) \, dx_1\, dx_2}{ \sqrt{\Var\left(\int_{t_1D_1\times t_2D_2} H_2(B_{x_1,x_2}) \, dx_1\, dx_2\right)}} \convlaw H,
\end{equation*}
where $H$ is a $2$-domain Rosenblatt random variable with indices $\alpha, \beta$ and domains $D_1,D_2$. Therefore, the cumulants of $H$, defined by $\{\kappa_k:k\ge 1\}$, are $\kappa_1=0$, $\kappa_2=1$  ($H$ has zero mean and unit variance), and, for $k\ge3$, we need to compute the limit 
\begin{equation*}
\kappa_k = 2^{k-1} (k-1)!  \lim_{t_1,t_2\to\infty} \kappa_k(t_1,t_2)  ,
\end{equation*}
where $\kappa_k(t_1,t_2)$ denotes the $k$th cumulant of $Y(t_1,t_2)$. We may identify $Y(t_1,t_2)$ with a suitable double Wiener-Ito integral as follows: \begin{equation*}
    Y(t_1,t_2) \eqlaw \frac{I_2\left( {f_{t_1D_1\times t_2D_2}}\right)}{ \sqrt 2 \|f_{t_1D_1\times t_2D_2,2}\|_{\mathcal H^{\otimes 2}} } ,
\end{equation*} 
where $f_{t_1D_1\times t_2D_2,2}$ is constructed as in \cref{eq: fD}, on a Hilbert space $\mathcal H$ with inner product induced by the covariance $C$. Applying \cref{fredholm does the job} and since $C$ is in tensor form, we have that \begin{equation*}
    \kappa_k(t_1,t_2) = 2^{k-1} (k-1)! \, 2^{-k/2} \prod_{i=1,2} \frac{\int_{(t_iD_i)^k} C_i^{\circ k}(x_i^{(1)},\ldots , x_i^{(k)}) \, dx_i^{(1)}\ldots dx_i^{(k)}}{ \left(\int_{(t_iD_i)^2} C_i(x_i-y_i)^2 \, dx_i\,dy_i\right)^{k/2}}.
\end{equation*}
By \cite[Equation (9)]{Maini2024}, we have that $\int_{(t_iD_i)^2} C(x_i-y_i)^2 \, dx_i\,dy_i \sim \sigma_i^2 t_i^{2d_i}c_i(t_i)^2$, where \begin{equation*}
     \sigma_1^2 := \int_{(D_1)^2} \| x_1-y_1 \|^{-2\alpha} dx_1dy_1, \qquad \sigma_2^2 := \int_{(D_2)^2} \| x_2-y_2 \|^{-2\beta} dx_2dy_2.
\end{equation*} 
Applying \cref{lem:cactus}, we may conclude.
\end{proof}

\begin{remark}
    The previous result can be easily generalised to $p$-domain Rosenblatt random variables $H_{2,D_1\times \ldots\times D_p}^{\alpha_1,\ldots,\alpha_p}$, indexed over domains $D_i\subset \R^{d_i}$, and positive parameters $\alpha_i<d_i/2$, simply taking \begin{equation*}
        c_k^{\alpha_1,\ldots,\alpha_p} = 2^{-k/2} \prod_{i=1}^p \int_{(D_i)^k} \| x_1-x_2 \|^{-\alpha_i} \ldots \|x_k-x_1\|^{-\alpha_i} \, dx_1 \ldots dx_k .
    \end{equation*}
\end{remark}

As a corollary, we may give an explicit expression for the characteristic function of a Rosenblatt-type random variable, as introduced in \cite{LO14}.

\begin{proposition}
\label{prop:rosencumulants}
Consider $\Delta\subset \R^d$, and $\alpha\in (0,d/2)$. Let $X:=X_2^\alpha(\Delta)$ be a Rosenblatt-type distribution of parameter $\alpha$. The characteristic function of $X$ admits the following expression: \begin{equation*}
    \varphi(\xi) = \E[\exp(i\xi X)] = \exp\left(\frac12\sum_{k=2}^\infty \frac{(2i\xi)^k}{k} c_k^\alpha\right),
\end{equation*} 
where the coefficients $c_k^\alpha$, $k\ge 2$, are as follows:
\begin{equation*}
    c_k^\alpha 
    = \int_{\Delta^k} \frac1{\|x^{(1)}-x^{(2)}\|^{\alpha}}\frac1{\|x^{(2)}-x^{(3)}\|^{\alpha}}\ldots \frac1{\|x^{(k)}-x^{(1)}\|^{\alpha}} \, dx^{(1)} \ldots dx^{(k)}.
\end{equation*}
Moreover, its distribution is moment-determined, and the cumulants of $X$, written $\{\kappa_k: k\ge 1\}$ are $\kappa_1 = 0$, and for $k\ge 2$
    \begin{equation*}
        \kappa_k = 2^{k-1}(k-1)! \, c_k^\alpha.
    \end{equation*}
\end{proposition}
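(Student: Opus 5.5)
The plan is to mirror the proof of \cref{prop:rosenblatt characteristic}, now with a single block. Fix a centered, unit-variance, stationary Gaussian field $B$ on $\R^d$ whose covariance $C$ is regularly varying with index $-\alpha\in(-d/2,0)$; for concreteness take the Cauchy covariance $C(x)=(1+\|x\|^2)^{-\alpha/2}$. By the classical non-central limit theorem (Dobrushin--Major, Taqqu; see also \cite{LO14} and \cite{Maini2024}), the standardized functional
\begin{equation*}
Y_t:=\frac{\int_{t\Delta} H_2(B_x)\,dx}{\sqrt{\Var\left(\int_{t\Delta} H_2(B_x)\,dx\right)}}
\end{equation*}
converges in law to $X=X_2^\alpha(\Delta)$. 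The law of $X$ does not depend on which regularly varying $C$ we choose, by \cref{rem: no spectral reg var} and \cref{lem:cactus}.

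Next, identify $Y_t$ with $I_2(f_{t\Delta,2})/(\sqrt2\|f_{t\Delta,2}\|)$. \cref{fredholm does the job} then gives the cumulants of $Y_t$ as $2^{k-1}(k-1)!$ times the normalized cyclic integrals $c_k(t\Delta;C)$ from \eqref{eq:ck}, up to the factor $2^{-k/2}$ coming from the normalization. To evaluate these integrals, change variables $x^{(j)}=t u^{(j)}$. The numerator becomes
\begin{equation*}
t^{dk}\int_{\Delta^k}\prod_j C\bigl(t(u^{(j)}-u^{(j+1)})\bigr)\,du,
\end{equation*}
and the denominator is $\sim (t^{2d}C(t)^2\sigma^2)^{k/2}$ by \cite[Equation (9)]{Maini2024}. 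Hence the ratio equals
\begin{equation*}
\int_{\Delta^k}\prod_j \frac{C\bigl(t(u^{(j)}-u^{(j+1)})\bigr)}{C(t)}\,du\cdot\sigma^{-k}.
\end{equation*}
The integrand converges pointwise to $\prod_j\|u^{(j)}-u^{(j+1)}\|^{-\alpha}$. The main obstacle is justifying this passage to the limit by dominated convergence, since the limit integrand is singular on diagonals. I would use Potter bounds (\cref{prop:potter}) to dominate each ratio by $\|u\|^{-\alpha\mp\varepsilon}$, together with the trivial bound $C\le1$ near the origin. Choosing $\varepsilon$ small keeps $\alpha+\varepsilon<d/2$. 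The cyclic product of such kernels is integrable on $\Delta^k$: by Hölder, or by iterating Hardy--Littlewood--Sobolev, the cyclic integral is dominated by the trace of the $k$th power of a Hilbert--Schmidt operator whose kernel $\|u-v\|^{-(\alpha+\varepsilon)}$ lies in $L^2(\Delta^2)$, because $2(\alpha+\varepsilon)<d$. This is exactly the content of \cref{lem:cactus}, which I would invoke.

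Finally, the limiting cumulants have the form $2^{k-1}(k-1)!\,c_k^\alpha$, where the constants are absorbed into the normalization of $X$ (unit variance, with $c_2^\alpha$ consistent with this). These cumulants define a characteristic function that is analytic near the origin, since the $c_k^\alpha$ grow at most geometrically by the Hilbert--Schmidt bound. Summing the cumulant series then gives the stated formula $\exp\bigl(\tfrac12\sum_k (2i\xi)^k c_k^\alpha/k\bigr)$. Moment determinacy follows because $X$ lies in the second chaos, as in \cref{fredholm does the job}. Convergence of cumulants of $Y_t$ to those of $X$ follows from convergence in law inside a fixed chaos together with uniform moment bounds given by hypercontractivity.
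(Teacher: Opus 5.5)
Your route is the one the paper intends. The paper presents this proposition as a corollary of \cref{prop:rosenblatt characteristic}: the same argument with a single block, namely a concrete regularly varying field, a non-central limit theorem, the cumulant formula of \cref{fredholm does the job}, and \cref{lem:cactus}.

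However, your normalization bookkeeping breaks the final step. With the standardized functional $Y_t$, your own computation gives
\begin{equation*}
\kappa_k(Y_t)=2^{k-1}(k-1)!\,2^{-k/2}\,c_k(t\Delta;C)\longrightarrow 2^{k-1}(k-1)!\,\frac{c_k^\alpha}{(2c_2^\alpha)^{k/2}},
\end{equation*}
so the limit has $\kappa_2=1$. The statement instead asserts $\kappa_2(X)=2c_2^\alpha=2\int_{\Delta^2}\|x-y\|^{-2\alpha}\,dx\,dy$. This is the variance of the Rosenblatt-type variable of \cite{LO14}, and it is not $1$ in general. Hence $X$ is not unit-variance, and $Y_t$ converges to $X/\sqrt{2c_2^\alpha}$ rather than to $X$. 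Your sentence ``the constants are absorbed into the normalization of $X$ (unit variance, with $c_2^\alpha$ consistent with this)'' is therefore false. As written, your argument proves the formula with $c_k^\alpha$ replaced by $c_k^\alpha(2c_2^\alpha)^{-k/2}$; compare the factor $(2\sigma^2)^{-k/2}$ in \eqref{eq:ckRosen}.

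The repair is simple, and there are two options.
\begin{itemize}
\item Normalize by the deterministic factor $t^d c(t)$, with $C(x)=c(\|x\|)$, as in \cite{LO14}. The cumulants of $\frac{1}{t^dc(t)}\int_{t\Delta}H_2(B_x)\,dx$ are $2^{k-1}(k-1)!\,\|C^{\circ k}\|_{L^1((t\Delta)^k)}/(t^dc(t))^k$, and \cref{lem:cactus} gives their limit $2^{k-1}(k-1)!\,c_k^\alpha$ verbatim. You then no longer need the variance asymptotics from \cite{Maini2024}.
\item Keep the standardization and use homogeneity, $\kappa_k(aZ)=a^k\kappa_k(Z)$ with $a=\sqrt{2c_2^\alpha}$.
\end{itemize}

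Two minor points. First, your domination sketch (``$C\le 1$ near the origin'') does not by itself give a $t$-uniform bound, since $C(tu)/c(t)\le 1/c(t)\to\infty$ there. \cref{lem:cactus} treats that region as a vanishing remainder, so invoking it, as you do, is the right move. Second, your remark that convergence in law within a fixed chaos, plus hypercontractive moment bounds, yields convergence of cumulants makes explicit a step the paper leaves implicit.
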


\begin{remark}
    If $d=1$, a different proof can be found in \cite{VT13cumuRosenblatt}. We anticipate that an analogous argument can be extended to $d\ge2$.
\end{remark}

\subsection{Miscellanea}
\label{sec:prelim misc}
\subsubsection{Covariograms}
\begin{definition}[Covariogram]\label{def:covariogram}
We define the covariogram $g_d : \R^d \to \R_{\ge 0}$ of a subset $D\subset \R^d$ as 
    \begin{equation*}
        g_d (D, z) := \vol( D \cap (D+z)), \qquad \forall z\in\mathbb R^{d_i}.
    \end{equation*}
\end{definition}

\begin{proposition}\label{prop:covariogram} \begin{itemize}
\item[(i)] For all $z\in\R^d$, we have $g_d(D,z)\le \vol(D)$.
\item[(ii)] For all $z\in\R^d$ and $t>0$, we have $g_d(tD,z) = t^d g_d(D,z/t)$.
    \item[(iii)] If $D$ is convex, for all $s,t\in\R$ such that $s<t$ we have \begin{equation*}
        g_d(D,s z) \ge  g_d(D,t z) \qquad \forall z\in\R^d.
    \end{equation*}
    \item[(iv)] Let $D\subset \R^d$ have finite volume. The sequence $\{g_n:\R^d \to \R_{\ge}: n\in\N\}$, defined by \begin{equation*}
        g_n: z\mapsto g_d(D,z/n), 
    \end{equation*}
    converges uniformly on any compact set to $\vol(D)$.
\end{itemize}
\end{proposition}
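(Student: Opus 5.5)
The four claims are elementary properties of the covariogram, and I would verify them one at a time from the identity
\[
g_d(D,z)=\int_{\R^d}\mathbbm{1}_D(x)\,\mathbbm{1}_D(x-z)\,dx,
\]
which holds because $x\in D+z$ iff $x-z\in D$.

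For (i), the intersection $D\cap(D+z)$ is a subset of $D$, so by monotonicity of Lebesgue measure $g_d(D,z)\le\vol(D)$.

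For (ii), note that $tD\cap(tD+z)=t\bigl(D\cap(D+z/t)\bigr)$. Lebesgue measure in $\R^d$ scales by $t^d$ under dilation, which gives $g_d(tD,z)=t^d g_d(D,z/t)$.

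For (iii), the statement should be read for $0\le s<t$, since covariograms are even and the claim fails for negative $s$. The key step is the inclusion, for $0\le\lambda\le1$,
\[
(1-\lambda)\bigl(D\cap D\bigr)+\lambda\bigl(D\cap(D+w)\bigr)\subset D\cap(D+\lambda w).
\]
This is immediate from convexity: if $a\in D$ and $b\in D\cap(D+w)$, then $(1-\lambda)a+\lambda b\in D$. Moreover $(1-\lambda)a+\lambda b-\lambda w=(1-\lambda)a+\lambda(b-w)\in D$, because $b-w\in D$.

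A cleaner route avoids Brunn--Minkowski. Fix $u=tz$ and set $\lambda=s/t\in[0,1)$. For $x\in D\cap(D+u)$, the points $x$ and $x-u$ both lie in $D$, so by convexity $x-\lambda u\in D$. Hence $D\cap(D+u)\subset D\cap(D+\lambda u)$, and therefore $g_d(D,\lambda u)\ge g_d(D,u)$, i.e. $g_d(D,sz)\ge g_d(D,tz)$.

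For (iv), write
\[
\vol(D)-g_d(D,z/n)=\frac12\int_{\R^d}\bigl|\mathbbm{1}_D(x)-\mathbbm{1}_D(x-z/n)\bigr|\,dx
=\tfrac12\bigl\|\mathbbm{1}_D-\tau_{z/n}\mathbbm{1}_D\bigr\|_{L^1},
\]
where $\tau_h$ denotes translation by $h$. The first equality holds since $\vol(D\,\triangle\,(D+h))=2\bigl(\vol(D)-g_d(D,h)\bigr)$, using that $D$ and $D+h$ have the same finite volume. Continuity of translations in $L^1$, valid because $\mathbbm{1}_D\in L^1$, gives $\omega(\delta):=\sup_{|h|\le\delta}\|\mathbbm{1}_D-\tau_h\mathbbm{1}_D\|_{L^1}\to0$ as $\delta\to0$. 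On a compact set $K\subset\{|z|\le M\}$ this yields
\[
\sup_{z\in K}\,\bigl|\vol(D)-g_n(z)\bigr|\le\tfrac12\,\omega(M/n)\to0,
\]
which is the claimed uniform convergence.

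The only step requiring any care is (iv), because uniformity needs the modulus of continuity of translation, and that is standard. The rest is routine.
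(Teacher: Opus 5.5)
Your proof is correct. For (i)--(iii) it coincides with the paper's. The paper calls (i)--(ii) evident, and for (iii) it uses exactly your ``cleaner route'': if $x\in D\cap(D+tz)$ then $x-\delta tz\in D$ for $\delta\in[0,1]$, and one takes $\delta=s/t$. That choice tacitly imposes the same restriction $0\le s<t$ that you make explicit. Your preliminary Minkowski-combination inclusion is never used and can be dropped.

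The genuine difference is in (iv). The paper argues in three steps:
\begin{itemize}
\item[(a)] $g_n\to\vol(D)$ pointwise;
\item[(b)] for fixed $z$, $g_n(z)$ is nondecreasing in $n$ by (iii), applied with $s=1/(n+1)<t=1/n$;
\item[(c)] Dini's theorem then gives uniform convergence on compacts.
\end{itemize}
That route is short and recycles (iii), but it silently needs $D$ convex (to apply (iii)) and each $g_n$ continuous (for Dini). Moreover, the pointwise convergence it asserts is itself the $L^1$-continuity of translations at $h=0$. So strictly it covers only the convex bodies used later, not the ``finite volume'' generality stated in (iv).

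Your route goes through $\vol(D)-g_d(D,h)=\tfrac12\|\mathbbm{1}_D-\tau_h\mathbbm{1}_D\|_{L^1}$ and the modulus $\omega$. It needs nothing beyond measurability and $\vol(D)<\infty$, and it avoids both monotonicity and Dini. It also gives the quantitative bound $\sup_{|z|\le M}|\vol(D)-g_n(z)|\le\tfrac12\,\omega(M/n)$, i.e.\ uniform convergence on every bounded set. Its only cost is quoting continuity of translations in $L^1$.
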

\begin{proof}
\emph{(i)-(ii)} Evident. \emph{(iii)} If $x\in D\cap D+tz$ and $D$ is convex, we have that $\delta (x-tz) + (1-\delta) x = x -\delta tz \in D$, for all $\delta \in [0,1]$. Taking $\delta=s/t$, we conclude. \emph{(iv)} Since $D$ has finite volume, we have that $g_n$ converges pointwise to $\vol(D)$ on $\R^d$. Moreover, for any fixed $z\in\R^d$ the convergence is monotonic increasing by \emph{(iii)}. Uniform convergence over compact sets then follows by Dini's theorem.
\end{proof}

\subsubsection{Potter bounds}

A classical reference for the next result is the monograph \cite{bingham}. See also \cite[Lemma 2.1 (iv)]{MainiRossiZheng2025}.

\begin{proposition}[Potter bounds for regularly varying functions]\label{prop:potter} 
Let $f:\R\to\R^+$ be regularly varying at $+\infty$ with index $\rho$. Then, for any $\delta > 0$ and for any $A > 1$, there exists some constant  
$X = X(A, \delta)$ such that, for any $x, y \in [X, \infty)$, \begin{equation*}
A^{-1} \min \left\{ (y/x)^{\rho + \delta}, (y/x)^{\rho - \delta} \right\} 
\le \frac{f(y)}{f(x)} 
\le A \max \left\{ (y/x)^{\rho + \delta}, (y/x)^{\rho - \delta} \right\}.
\end{equation*}
In particular, this implies that as $x\rightarrow+\infty$
\begin{equation}
    \label{eq:regvar 0 e infty}
    f(x)x^{-\rho+\delta}\rightarrow +\infty\,\,,\quad\quad\quad f(x)x^{-\rho-\delta}\rightarrow 0\,\,.
\end{equation}
\end{proposition}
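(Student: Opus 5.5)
The statement to prove is the Potter bound for a function $f$ regularly varying with index $\rho$, together with its consequence \eqref{eq:regvar 0 e infty}. I would write $f(x)=x^{\rho}L(x)$ with $L$ slowly varying, and reduce both inequalities to the claim
\begin{equation*}
A^{-1}\min\{(y/x)^{\delta},(y/x)^{-\delta}\}\le \frac{L(y)}{L(x)}\le A\max\{(y/x)^{\delta},(y/x)^{-\delta}\}.
\end{equation*}
Multiplying through by $(y/x)^\rho$ then gives the stated bounds.

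\textbf{Step 1: Karamata representation.} I would invoke the Karamata representation theorem for slowly varying functions, as in \cite{bingham}. It is a consequence of the uniform convergence theorem and is the one genuinely nontrivial ingredient. It gives
\begin{equation*}
L(x)=c(x)\exp\Big(\int_{a}^{x}\frac{\varepsilon(u)}{u}\,du\Big), \qquad x\ge a,
\end{equation*}
with $c(x)\to c\in(0,\infty)$ and $\varepsilon(u)\to0$ as $x,u\to\infty$. If one wants to avoid citing it, the obstacle is proving uniform convergence of $L(\lambda x)/L(x)\to1$ for $\lambda$ in compact subsets of $(0,\infty)$. That is done via Egorov's theorem plus a measure argument on $\log L(e^{s})$. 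I would simply cite it.

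\textbf{Step 2: the bound on $L(y)/L(x)$.} Given $\delta>0$ and $A>1$, I choose $X$ large so that two conditions hold for all $u,x,y\ge X$:
\begin{itemize}
\item[(a)] $|\varepsilon(u)|\le\delta$;
\item[(b)] $c(y)/c(x)\in[A^{-1},A]$, which is possible since $c(\cdot)$ has a positive finite limit.
\end{itemize}
Then
\begin{equation*}
\frac{L(y)}{L(x)}=\frac{c(y)}{c(x)}\exp\Big(\int_x^y\frac{\varepsilon(u)}{u}\,du\Big),
\end{equation*}
and by (a) the exponent is bounded in absolute value by $\delta|\log(y/x)|$. Hence the exponential lies between $\min\{(y/x)^{\pm\delta}\}$ and $\max\{(y/x)^{\pm\delta}\}$. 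Combined with (b), this is the claim.

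\textbf{Step 3: the limits \eqref{eq:regvar 0 e infty}.} I fix $x=X$ and let $y\to\infty$. The bounds give
\begin{equation*}
A^{-1}f(X)\Big(\tfrac{y}{X}\Big)^{\rho-\delta'}\le f(y)\le A\,f(X)\Big(\tfrac{y}{X}\Big)^{\rho+\delta'}.
\end{equation*}
Applying this with $\delta'=\delta/2$ yields $f(y)y^{-\rho+\delta}\gtrsim y^{\delta/2}\to\infty$ and $f(y)y^{-\rho-\delta}\lesssim y^{-\delta/2}\to0$.

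The sign conventions in \eqref{eq:regvar 0 e infty} follow exactly as written in the statement. The only substantive input is Step 1.
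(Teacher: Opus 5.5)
Your proof is correct, and it is the classical argument via the Karamata representation theorem in \cite{bingham}, which is the source the paper cites instead of giving a proof of its own; Step~3 correctly obtains \eqref{eq:regvar 0 e infty} by applying the bounds with $\delta/2$ at a fixed base point $X$. Two points are used tacitly: $f$ must be measurable, which the representation theorem requires, and since the paper defines regular variation only up to $\sim$, you should set $L(x):=x^{-\rho}f(x)$, which is still slowly varying.
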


An important property of regularly varying functions that will be used in the sequel is uniform convergence on compact sets in the following sense. For a proof, see e.g. \cite[Theorem 1.5.2] {bingham}.

\begin{proposition}[Uniform convergence theorem for regularly varying functions] 
\label{prop:UCTregular}
Let $f:\R\to\R^+$ be regularly varying at $+\infty$ with index $\rho\le0$. Then, the following convergence holds uniformly on compact sets for $x\in (0,+\infty)$: \begin{equation*}
    \lim_{\lambda\to\infty} \frac{f(\lambda x)}{f(\lambda)}\rightarrow x^{\rho}\,. 
\end{equation*}
In particular, if $\lambda_t\rightarrow\infty$ and $u_t\rightarrow u\in(0,\infty)$ as $t\to\infty$, then we have \begin{equation}\label{eq: uct equation}
    \lim_{t\to\infty} \frac{f(\lambda_t\,u_t)}{f(\lambda_t)}\rightarrow u^{\rho}\,.
\end{equation}
\end{proposition}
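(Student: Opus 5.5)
The plan is to use the Potter bounds of \cref{prop:potter} to promote the pointwise regular-variation limit to uniform convergence of the ratio $f(\lambda x)/f(\lambda)$ on compact subsets $K=[a,b]\subset(0,\infty)$. This is the classical route of \cite[Theorem 1.5.2]{bingham}. Write $f(x)=x^{\rho}L(x)$ with $L$ slowly varying. Since
\[
\frac{f(\lambda x)}{f(\lambda)}-x^\rho = x^\rho\Big(\frac{L(\lambda x)}{L(\lambda)}-1\Big),
\]
and $x^\rho$ is bounded on $K$, it suffices to prove the uniform convergence $\sup_{x\in K}|L(\lambda x)/L(\lambda)-1|\to0$ for the slowly varying part.

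For this I would follow the standard Egorov-type argument. Pass to $h(u):=\log L(e^u)$, which is measurable with $h(u+v)-h(u)\to0$ for every $v$. Fix $\varepsilon>0$ and the interval $I=[\log a,\log b]$. Define
\[
E_n:=\{v\in[0,2|I|+1]:\ |h(u+v)-h(u)|\le\varepsilon \text{ for all } u\ge n\}.
\]
These sets are measurable and increase to the whole interval, so some $E_N$ has measure larger than half the interval length. Then, for any $v\in I$ and $u$ large, the sets $E_N$ and $v+E_N$ intersect (a measure-counting argument), so one can write $v=w_1-w_2$ with $w_1,w_2\in E_N$. This gives
\[
|h(u+v)-h(u)|\le |h(u+v-w_1+w_1)-h(u+v-w_1)|+|h(u+w_2)-h(u)|\le 2\varepsilon,
\]
uniformly in $v\in I$. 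Exponentiating yields the uniform statement for $L$.

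Alternatively, since \cref{prop:potter} is already available, one could shortcut via the representation theorem $L(x)=c(x)\exp\int_{X}^x\eta(s)\,ds/s$ with $c(x)\to c$ and $\eta\to0$. This gives $\log\frac{L(\lambda x)}{L(\lambda)}=\log\frac{c(\lambda x)}{c(\lambda)}+\int_\lambda^{\lambda x}\eta(s)\,ds/s$, which is bounded by $o(1)+\sup_{s\ge\lambda a}|\eta(s)|\,|\log x|$, uniformly on $K$. I expect the main obstacle to be precisely this step, namely getting uniformity from mere measurability (the Egorov/Steinhaus argument, or equivalently the representation theorem). Once one of these is invoked, the rest is routine.

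Finally, \eqref{eq: uct equation} follows from the triangle inequality. Since $u_t\to u\in(0,\infty)$, eventually $u_t\in K:=[u/2,2u]$, and
\[
\Big|\frac{f(\lambda_tu_t)}{f(\lambda_t)}-u^\rho\Big|\le\sup_{x\in K}\Big|\frac{f(\lambda_tx)}{f(\lambda_t)}-x^\rho\Big|+|u_t^\rho-u^\rho|.
\]
The first term vanishes by the uniform convergence just proved, and the second by continuity of $x\mapsto x^\rho$.
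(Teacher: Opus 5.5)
Your outer structure is the textbook route behind the paper's one-line citation of \cite[Theorem 1.5.2]{bingham}, and the final triangle-inequality step for \eqref{eq: uct equation} is correct. You reduce to the slowly varying factor via $f(\lambda x)/f(\lambda)-x^\rho=x^\rho\bigl(L(\lambda x)/L(\lambda)-1\bigr)$, prove uniform convergence for $L$, and then deduce \eqref{eq: uct equation}. The gap is in the step you yourself flag as the obstacle, which as written does not go through.

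\emph{Measurability.} $E_n=\bigcap_{u\ge n}\{v:\ |h(u+v)-h(u)|\le\varepsilon\}$ is an intersection over uncountably many $u$, so its measurability is not automatic. Its complement in $[0,T]$ is the projection of $\{(u,v):u\ge n,\ |h(u+v)-h(u)|>\varepsilon\}$. That projection is analytic, hence Lebesgue measurable, when $h$ is Borel, but you give no argument, and for merely Lebesgue measurable $h$ there is no general reason for it. Without measurability the counting collapses, because large outer measure of $E_N$ does not force $E_N\cap(v+E_N)\neq\emptyset$.

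\emph{Counting.} Even for measurable $E_N\subset[0,T]$, the count is off. For $0<v<T$ the overlap is only guaranteed once $|E_N|>(T+v)/2$; $|E_N|>T/2$ is not enough (take $E_N=[0,0.6\,T]$ and $v=0.7\,T$). Also, $T$ must exceed $\sup_{v\in I}|v|$ rather than equal $2|I|+1$: for $I=[\log 10,\log 11]$ the set $v+E_N$ lies entirely to the right of $[0,2|I|+1]$.

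\emph{Telescoping.} With $v=w_1-w_2$, the correct splitting is $h(u+v)-h(u)=[h(u-w_2+w_1)-h(u-w_2)]-[h(u-w_2+w_2)-h(u-w_2)]$, with base point $u-w_2$. Your second term $|h(u+w_2)-h(u)|$ does not fit.

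\emph{Repair.} The standard fix, as in the proof of the slowly varying case \cite[Theorem 1.2.1]{bingham}, avoids uncountable intersections by arguing along a sequence. If uniformity fails on $[0,1]$, pick $\varepsilon>0$, $x_n\to\infty$ and $v_n\in[0,1]$ with $|h(x_n+v_n)-h(x_n)|\ge\varepsilon$. Let $U_n=\{w\in[0,2]:|h(x_n+w)-h(x_n)|<\varepsilon/2\}$ and $V_n=\{w\in[0,2]:|h(x_n+v_n+w)-h(x_n+v_n)|<\varepsilon/2\}$. These are countably many measurable sets whose indicators tend to $1$ pointwise, so $|U_n|,|V_n|\to2$ by dominated convergence. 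For large $n$ this forces $U_n\cap(v_n+V_n)\neq\emptyset$, and any $w$ in it gives $|h(x_n+v_n)-h(x_n)|<\varepsilon$, a contradiction. General compact sets then follow by telescoping over steps in $[0,1]$. In the paper's application $f=c_i$ is continuous, so your $E_n$ are closed and, with the counting corrected, your argument does work there.

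\emph{Your alternative.} The representation-theorem computation is correct, except that the supremum should be over $s\ge\lambda\min(a,1)$. However, in the standard development the representation theorem, and hence the Potter bounds of \cref{prop:potter} that you name as your tool, are themselves derived from the uniform convergence theorem. So neither route is an independent proof; it only amounts to citing the result, as the paper does.
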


\section{Proofs}\label{sec:proof}

In this section, preliminary to the proofs of our main results, we investigate the property of short and long range dependance of the field (\cref{sec:range dep}), and we state and prove the variance growth rate (\cref{sec:variance}). Then, in \cref{subsec:proofLT} we prove our main results.

\subsection{Range dependence}

\label{sec:range dep}

We investigate the properties of range dependence of $\f(B_x)$, and describe the variance asymptotics of \eqref{eq:Yt}.

\begin{definition}
    Fix $d\ge 1$. Let $Z=(Z_x)_{x\in\R^d}$ be a continuous and stationary random field on $\R^d$. We say that it exhibits short-range dependence if \begin{equation*}
        \int_{\R^d} |\Cov(Z_x,Z_0)| \, dx < \infty,
    \end{equation*}
    and long-range dependence otherwise. If $d=1$, we say that it exhibits short (or long) memory.
\end{definition}

\begin{proposition} \label{prop:GneitingRD} Consider the field $\f(B)=(\f(B_x))_{x\in\R^d}$, where $B=(B_x)_{x\in\R^d}$ is a centered, unit-variance, stationary and continuous Gaussian field; $\varphi:\R\to\R$ is such that $\E[\varphi(B_0)^2]<\infty$ and has Hermite rank $R\ge 1$; and the covariance function $C$ of $B$ satisfies \Cref{ass:gneiting}. Then, $\f(B)$ exhibits long-range dependence if $R=1$, or $R\ge 2$ and at least one of the following properties hold: 
    \begin{equation} \label{eq:notbm}
         C_1 \notin L^R(\mathbb R^{d_1}) \quad \text{or} \quad C_2 \notin L^{R-1}(\mathbb R^{d_2}).
    \end{equation}
\end{proposition}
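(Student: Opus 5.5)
We need to show that $\int_{\R^d}|\Cov(\f(B_x),\f(B_0))|\,dx=\infty$ under the stated conditions. The plan is to use the Hermite expansion \eqref{eq: deco phi} together with the diagram formula $\Cov(H_q(B_x),H_p(B_0))=\delta_{pq}\,q!\,C(x)^q$. This gives
\begin{equation*}
\Cov(\f(B_x),\f(B_0))=\sum_{q\ge R} q!\,a_q^2\, C(x)^q .
\end{equation*}
Under \Cref{ass:gneiting} we have $C_1\ge 0$ and $C_2>0$, so $C\ge0$ everywhere. Every term in the series is then nonnegative, and the absolute value can be dropped:
\begin{equation*}
\int_{\R^d}|\Cov(\f(B_x),\f(B_0))|\,dx=\sum_{q\ge R}q!\,a_q^2\int_{\R^d}C(x)^q\,dx\ \ge\ R!\,a_R^2\int_{\R^d}C(x)^R\,dx .
\end{equation*}
Since $a_R\neq0$, it suffices to prove $\int_{\R^d}C^R=\infty$. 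Tonelli justifies the interchange of sum and integral.

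The key computation is a change of variables in $x_1$ for fixed $x_2$. Put $u=x_1\,C_2(x_2)^{1/d_1}$, so that $dx_1=C_2(x_2)^{-1}du$. This is legitimate because $C_2(x_2)>0$. Then
\begin{equation*}
\int_{\R^d}C(x)^R\,dx=\int_{\R^{d_2}}C_2(x_2)^R\left(\int_{\R^{d_1}}C_1\big(x_1C_2(x_2)^{1/d_1}\big)^R dx_1\right)dx_2=\|C_1\|_{L^R(\R^{d_1})}^R\,\|C_2\|_{L^{R-1}(\R^{d_2})}^{R-1}.
\end{equation*}
This is exactly the product structure appearing in the constant $\ell$ of \cref{thm:ltBREUERMAJOR}. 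The case $R\ge2$ now follows immediately. If $C_1\notin L^R$, the first factor is infinite, and the second factor is strictly positive since $C_2>0$. If $C_2\notin L^{R-1}$, the second factor is infinite, and the first factor is positive because $C_1$ is continuous with $C_1(0)=1$. Either way the product is $+\infty$.

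The case $R=1$ is the only delicate point. The same identity gives $\int C=\|C_1\|_{L^1}\,\|C_2\|_{L^0}^{0}$, and the factor $C_2^{0}\equiv1$ integrated over $\R^{d_2}$ is infinite. Concretely, $\int_{\R^{d_2}}\big(\int C_1\big)\,dx_2=\infty$ whenever $\int C_1>0$, which holds by continuity and positivity of $C_1$ near $0$. If $C_1\notin L^1$ the integral is infinite anyway. So for $R=1$ the field always exhibits long-range dependence, which is the claim.

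The main (mild) obstacle is bookkeeping. One has to justify the interchange of sum and integral, which Tonelli handles by nonnegativity, and allow the value $+\infty$ in the change of variables, which is fine since all integrands are nonnegative and measurable. One should also note that complete monotonicity gives $c_1\ge0$, which guarantees $C\ge0$ and makes the comparison with the rank-$R$ term valid.
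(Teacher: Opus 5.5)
Your proof is correct and takes essentially the same route as the paper. Both arguments reduce to the rank-$R$ term using orthogonality of the chaoses and $C\ge 0$, then apply the change of variables $x_1\mapsto x_1C_2(x_2)^{1/d_1}$ to get $\int_{\R^d}C^R=\|C_1\|^R_{L^R(\R^{d_1})}\int_{\R^{d_2}}C_2^{R-1}$. You are more explicit than the paper on two points: that nonnegativity of $C$ is what allows you to drop the absolute values, and that in the case $R=1$ the factor $\int_{\R^{d_2}}C_2^{0}\,dx_2=\infty$ forces long-range dependence.
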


\begin{corollary} If $C_1$ and $C_2$ are regularly varying with parameters $-\rho_1<0$ and $-\rho_2<0$ (resp.), then the field $\f(B)=(\f(B_x))_{x\in\R^d}$ exhibits long-range dependence if and only if $R=1$, or $R\ge 2$ and \begin{equation*}
        \rho_1 \le \frac{d_1}{R} \quad \text{or} \quad \rho_2 \le \frac{d_2}{R-1}.
    \end{equation*}
\end{corollary}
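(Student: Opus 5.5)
The plan is to reduce the dependence property of $\f(B)$ to the integrability of $C^R$ over $\R^d$. Then the Gneiting structure \eqref{eq:Cgneiting} turns this into separate integrability conditions on $C_1$ and $C_2$. Finally, regular variation turns those into conditions on $\rho_1,\rho_2$. The argument below also reproves the implication already stated in \cref{prop:GneitingRD}, and it supplies the converse needed for the ``only if'' direction.

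\emph{Step 1: reduction to $\int C^R$.} By the Hermite expansion \eqref{eq: deco phi} and the diagram formula,
\begin{equation*}
\Cov(\f(B_x),\f(B_0))=\sum_{q\ge R} q!\,a_q^2\, C(x)^q .
\end{equation*}
Under \Cref{ass:gneiting} we have $C_1\ge 0$ (complete monotonicity) and $C_2>0$, so $0\le C\le 1$.

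For the upper bound, every term is non-negative and $C^q\le C^R$ for $q\ge R$. Hence
\begin{equation*}
0\le \Cov(\f(B_x),\f(B_0))\le \Var(\f(B_0))\,C(x)^R .
\end{equation*}
For the lower bound, keeping only the first term gives
\begin{equation*}
\Cov(\f(B_x),\f(B_0))\ge R!\,a_R^2\,C(x)^R .
\end{equation*}
Since $a_R\neq 0$, $\f(B)$ has short-range dependence if and only if $C\in L^R(\R^d)$.

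\emph{Step 2: factorisation.} By Tonelli, and the change of variables $y=x_1C_2(x_2)^{1/d_1}$ in the inner integral (legitimate since $C_2>0$), we get
\begin{equation*}
\int_{\R^d} C^R
=\int_{\R^{d_2}} C_2(x_2)^R \int_{\R^{d_1}} C_1\big(x_1C_2(x_2)^{1/d_1}\big)^R\,dx_1\,dx_2
=\|C_1\|_{L^R(\R^{d_1})}^R\int_{\R^{d_2}}C_2(x_2)^{R-1}\,dx_2 .
\end{equation*}
The right-hand side is a product of two quantities in $(0,\infty]$. Therefore $C\in L^R(\R^d)$ if and only if $C_1\in L^R(\R^{d_1})$ and $C_2\in L^{R-1}(\R^{d_2})$.

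For $R=1$ the second factor is $\int_{\R^{d_2}} 1\,dx_2=\infty$, so $R=1$ always gives long-range dependence. This yields \cref{prop:GneitingRD} together with its converse.

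\emph{Step 3: regular variation.} Since $C_i$ is bounded and continuous, only the behaviour at infinity matters. In polar coordinates, $\int_{\R^{d_1}} C_1^R$ is finite if and only if $\int^\infty r^{d_1-1-R\rho_1}L_1(r)^R\,dr<\infty$.

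When $R\rho_1\neq d_1$, Potter bounds (\cref{prop:potter}) settle this. They give convergence when $R\rho_1>d_1$ and divergence when $R\rho_1<d_1$.

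Similarly, $C_2\in L^{R-1}(\R^{d_2})$ holds if and only if $(R-1)\rho_2>d_2$, again away from the boundary $(R-1)\rho_2=d_2$.

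Combining Steps 1–3: long-range dependence holds if and only if $R=1$, or $\rho_1\le d_1/R$, or $\rho_2\le d_2/(R-1)$.

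\emph{Main obstacle: the boundary cases.} The delicate point is $\rho_1=d_1/R$ or $\rho_2=d_2/(R-1)$. There integrability depends on the slowly varying factor; for instance $L(r)=\log^{-2}r$ would make the integral converge. I would use the standing convention of the Remark after \cref{ass:RV}, namely that the $C_i$ are asymptotically power laws with $L_i\to M_i\neq0$. Under that convention the integrand is $\asymp r^{-1}$ and the integral diverges, which gives the inclusive inequalities in the statement. For general regularly varying functions, the boundary case must instead be decided by whether $\int^\infty L_i(r)^{R}r^{-1}\,dr$ (respectively $\int^\infty L_2(r)^{R-1}r^{-1}\,dr$) is finite.
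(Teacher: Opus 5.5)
Your proposal is correct and follows the same route as the paper's proof of \cref{prop:GneitingRD}: reduce short-range dependence to $C\in L^R(\R^d)$, then factorize $\int_{\R^d} C^R=\|C_1\|^R_{L^R(\R^{d_1})}\int_{\R^{d_2}} C_2^{R-1}$ via $x_1\mapsto x_1C_2(x_2)^{1/d_1}$, and finally read off the regular-variation thresholds. Two of your points are precisions the paper leaves implicit: the two-sided bound $R!\,a_R^2\,C^R\le\Cov(\f(B_x),\f(B_0))\le\Var(\f(B_0))\,C^R$, which gives the converse direction, and your observation that the boundary cases $R\rho_1=d_1$ and $(R-1)\rho_2=d_2$ require the asymptotic power-law convention, since otherwise the slowly varying factor decides integrability.
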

\begin{remark}
   If the Hermite rank of $\varphi$ is $R=1$, then we always have long-range dependence. Moreover, we can see that the roles of $C_1$ and $C_2$ are not symmetric, unlike in the case of separable covariance functions. See for comparison \cite[Theorem 3.6]{pdom24}.
\end{remark}

\begin{proof}[Proof of \cref{prop:GneitingRD}] As the fields $H_p(B)$ and $H_q(B)$ are uncorrelated if $p\neq q$, and $|C|^q\le |C|^p$ for any $p\le q$, it's enough to check if $C\in L^R(\R^d)$, where $R$ is the Hermite rank of $\varphi$. If $C$ belongs to the Gneiting class with factors $C_1$ and $C_2$, we have short-range dependence if, up to the change of variables $x_1\mapsto x_1C_2(x_2)^{1/d_1}$, we have \begin{multline*}
    \int_{\mathbb R^{d_2}} |C_2(x_2)|^R \int_{\mathbb R^{d_1}} |C_1((x_1)C_2(x_2)^{1/d_1})|^R dx_1 dx_2 \\
    = \int_{\mathbb R^{d_2}} |C_2(x_2)|^{R-1} \int_{\mathbb R^{d_1}} |C_1(x_1)|^R dx_1dx_2 < \infty.
\end{multline*}
The previous holds if $R>1$, $C_2\in L^{R-1}(\mathbb R^{d_2})$ and $C_1\in L^{R}(\mathbb R^{d_1})$. If \Cref{ass:RV} holds, namely, $C_1$ and $C_2$ are regularly varying with parameters $-\rho_1<0$ and $-\rho_2<0$, resp., we have short-range dependence when $R>1$ and $\rho_1 > \frac{d_1}{R}$ and $\rho_2 > \frac{d_2}{R-1}$.
\end{proof}

\subsection{Variance asymptotics}\label{sec:variance}

\begin{proposition}\label{prop:variance} Consider $Y(t)$ as in \eqref{eq:Yt}. Suppose Assumptions \ref{ass:gneiting} and \ref{ass:KEY} hold. Suppose that $\f=H_R$, $R\ge 2$. Then, as $t \to\infty$: \begin{enumerate}
    \item[(1)] if $C\in L^{{R}}(\R^d)$, then 
    \begin{equation}\label{eq:varianceBM}
        \Var(Y(t)) \sim \ell \, t_1(t)^{d_1}t_2(t)^{d_2},
    \end{equation}
    where $\ell = R! \, \vol(D_1) \vol(D_2) \, \int_{\R^{d_1}} C_1(x_1)^R dx_1 \, \int_{\R^{d_2}} C_2(x_2)^{R-1} dx_2$;
    \item[(2)] if $C_1\in L^{{R}}(\R^{d_1})$ and $C_2\notin L^{{R-1}}(\mathbb R^{d_2})$, but is regularly varying with index $-\rho_2<0$ such that $\rho_2<d_2/(R-1)$, then\footnote{Remark that the contribution of $t_2$ to the variance is larger than the volume, that is, we have $2d_2-(R-1)\rho_2>d_2$ by assumption on $\rho_2$. Hence, we are in a long-range dependent setting. Same holds for case \emph{(3)} and \emph{(4)}.}   \begin{equation}\label{eq:varianceNOTBM2}
        \Var(Y(t)) \sim t_1(t)^{d_1} \, t_2(t)^{2d_2-({{R-1}})\rho_2} \, L_2^\prime(t_2(t)),
    \end{equation}
    where $L_2^\prime$ is a slowly varying function.
    \end{enumerate}
Suppose now that $C_1\notin L^{{R}}(\mathbb R^{d_1})$, but is regularly varying with index $-\rho_1<0$ such that $\rho_1<d_1/R$. Then, \begin{enumerate}[resume]
    \item[(3)] if $C_2\in L^{R(1-\rho_1/d_1)}(\R^{d_2})$\footnote{In particular, this happens when $C_2\in L^{{R-1}}(\R^{d_2})$, since $d_2>\frac{d_1d_2}{{{R}}(d_1-\rho_1)}$ when ${{R}}\rho_1<d_1$. See also \cref{rem:newrange}.}, then \begin{equation}\label{eq:varianceNOTBM3}
        \Var(Y(t)) \sim t_1(t)^{2d_1-{{R}}\rho_1}t_2(t)^{d_2} L_1^\prime(t_1(t)),
    \end{equation}
    where $L_1^\prime$ is a slowly varying function;
    \item[(4)] if $C_2\notin L^{R(1-\rho_1/d_1)}(\R^{d_2})$, but in addition satisfies \Cref{ass:RV} with parameter $-\rho_2<0$ such that $\rho_2 < \frac{d_1d_2}{{{R}}(d_1-\rho_1)}$ (in particular, if $C_2\notin L^{{R-1}}(\mathbb R^{d_2})$), then, \begin{equation}\label{eq:varianceNOTBM4}
        \Var(Y(t)) \sim t_1(t)^{2d_1-{{R}}\rho_1}t_2(t)^{2d_2 - {{R}}\rho_2(1-\frac{\rho_1}{d_1})} L_1^{\prime\prime}(t_1(t))L_2^{\prime\prime}(t_2(t)),
    \end{equation}
    where $L_1^{\prime\prime}$ and $L_2^{\prime\prime}$ are two slowly varying functions.
    \end{enumerate}
\end{proposition}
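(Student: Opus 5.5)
Since $\varphi=H_R$, we have $\Var(Y(t)) = R!\int_{(t_1D_1\times t_2D_2)^2} C(x-y)^R\,dx\,dy$. Substituting $z=x-y$ and using the covariograms of \cref{def:covariogram}, this becomes
\begin{equation*}
\Var(Y(t)) = R!\int_{\R^{d_2}}\int_{\R^{d_1}} g_{d_1}(t_1D_1,z_1)\,g_{d_2}(t_2D_2,z_2)\,C_2(z_2)^R\,C_1\big(z_1C_2(z_2)^{1/d_1}\big)^R\,dz_1\,dz_2 .
\end{equation*}
In the inner integral I change variables $u=z_1C_2(z_2)^{1/d_1}$ (so $dz_1=C_2(z_2)^{-1}du$), exactly as in the proof of \cref{prop:GneitingRD}. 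This gives
\begin{equation*}
\Var(Y(t))=R!\int_{\R^{d_2}} g_{d_2}(t_2D_2,z_2)\,C_2(z_2)^{R-1}\, I_1\big(t_1C_2(z_2)^{1/d_1}\big)\,dz_2,
\end{equation*}
where $I_1(s):=\int_{\R^{d_1}} g_{d_1}(sD_1,u)\,C_1(u)^R\,du$. By \cref{prop:covariogram}(iii) and the fact that $c_2$ is nonincreasing (it is the reciprocal of a function with completely monotone derivative), the effective scale satisfies $s(z_2):=t_1C_2(z_2)^{1/d_1}\ge t_1 c_2(t_2\,\mathrm{diam}D_2)^{1/d_1}$ on the support of $g_{d_2}(t_2D_2,\cdot)$. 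By \Cref{ass:KEY} and regular variation/monotonicity of $c_2$, this scale tends to infinity uniformly in $z_2$. This uniformity is the key point.

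Next I use the one-block asymptotics of $I_1$. If $C_1\in L^R$, then $I_1(s)\sim s^{d_1}\vol(D_1)\|C_1\|_R^R$ by dominated convergence with \cref{prop:covariogram}(ii),(iv). If $C_1$ is regularly varying with $R\rho_1<d_1$, then rescaling $u=sv$ and applying Potter bounds (\cref{prop:potter}) with \cref{prop:UCTregular} gives $I_1(s)\sim \sigma_1 s^{2d_1}c_1(s)^R$, where $\sigma_1=\int_{D_1^2}\|v-w\|^{-R\rho_1}$. Since the scale diverges uniformly, $I_1(s(z_2))$ may be replaced by its equivalent, uniformly in $z_2$.

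The two cases for $C_1$ then lead to the following integrals.
\begin{itemize}
\item[(1)--(2)] When $C_1\in L^R$, I get $\Var\sim R!\,\vol(D_1)\|C_1\|_R^R\,t_1^{d_1}\int g_{d_2}(t_2D_2,z_2)C_2^{R-1}$, because the factor $C_2(z_2)$ from $s^{d_1}$ cancels. If $C_2\in L^{R-1}$, dominated convergence gives $t_2^{d_2}\vol(D_2)\|C_2\|^{R-1}_{R-1}$, which is (1). Otherwise, rescaling $z_2=t_2w$ and Potter bounds give $t_2^{2d_2}c_2(t_2)^{R-1}\int_{D_2^2}\|w-w'\|^{-(R-1)\rho_2}$, which is (2), with the slowly varying part absorbed into $L_2'$.
\item[(3)--(4)] When $C_1$ is regularly varying, I write $c_1(s)^R\sim s^{-R\rho_1}L_1(s)^R$. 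Then
\begin{equation*}
C_2^{R-1}I_1(s)\sim \sigma_1 t_1^{2d_1-R\rho_1}L_1(s)^R\,C_2(z_2)^{R-1+2-R\rho_1/d_1}C_2(z_2)^{-1}=\sigma_1 t_1^{2d_1-R\rho_1}L_1(s)^R\,C_2(z_2)^{R(1-\rho_1/d_1)}.
\end{equation*}
The remaining $z_2$-integral $\int g_{d_2}(t_2D_2,z_2)C_2^{R(1-\rho_1/d_1)}$ is handled as in (1)--(2). It is $\sim t_2^{d_2}\vol(D_2)\|C_2\|^{R(1-\rho_1/d_1)}_{R(1-\rho_1/d_1)}$ in case (3), and of order $t_2^{2d_2-R\rho_2(1-\rho_1/d_1)}$ times a slowly varying function in case (4), which uses $R\rho_2(1-\rho_1/d_1)<d_2$.
\end{itemize}

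The main obstacle is the slowly varying factor $L_1(t_1C_2(z_2)^{1/d_1})^R$, which couples $z_2$ and $t_1$. I would control its ratio to $L_1(t_1)^R$ with Potter bounds, giving the estimate $\le A\,C_2(z_2)^{\mp\delta/d_1}$. Choosing $\delta$ small keeps $C_2^{R(1-\rho_1/d_1)\pm\delta'}$ integrable in case (3), or keeps the power below the critical threshold in case (4). This yields a dominating function. In case (4), moreover, $C_2(t_2w)\sim c_2(t_2)\|w\|^{-\rho_2}$, so $L_1(t_1C_2^{1/d_1})$ behaves like a slowly varying function of the pair $(t_1,t_2)$. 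This is why the statement only claims the product form $L_1''(t_1)L_2''(t_2)$ up to asymptotic equivalence, and in the power-law case of the theorems it is simply constant. Near $z_2=0$, boundedness $C_2\le1$ keeps the scale from degenerating, and it stays $\ge t_1$, so no issue arises there.
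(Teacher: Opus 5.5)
Your treatment of cases (1)--(2) is essentially the paper's. Display \eqref{eq:var} already contains the substitution $z_1\mapsto z_1C_2(z_2)^{1/d_1}$, and case (2) is closed there exactly as you do: covariogram monotonicity at the worst scale $t_1c_2(t_2\diam(D_2))^{1/d_1}$, plus the $k=2$ case of Proposition~\ref{lem:cactus}.

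For (3)--(4) your route is genuinely different. The paper rescales all four variables to $D_1^2\times D_2^2$ and writes the integrand as a product of three ratios of $c_1,c_2$. It then runs dominated convergence with Potter bounds on a good set, with separate volume estimates for the remainders ($I_2$ for small $\|x_1-y_1\|$, $I_4,I_5$ for small $\|x_2-y_2\|$). You instead integrate out the first block exactly. For a fixed lag $z_2$, the $z_1$-integral is the one-block quantity $I_1(s)=\int_{(sD_1)^2}C_1(x-y)^R\,dx\,dy$ at scale $s(z_2)=t_1C_2(z_2)^{1/d_1}$. Since $s(z_2)\ge s_{\min}(t):=t_1c_2(t_2\diam(D_2))^{1/d_1}\to\infty$ on the whole support, the one-block asymptotics can be inserted uniformly; everything is nonnegative, so the $1+o(1)$ factors pull out.

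This buys two things. It removes the small-$\|x_1-y_1\|$ remainder and the three-ratio bookkeeping. It also makes the effective exponent transparent: $C_2^{R-2}s^{2d_1-R\rho_1}=t_1^{2d_1-R\rho_1}C_2^{R(1-\rho_1/d_1)}$. When $c_1$ is asymptotically power-law, it gives a short complete proof of all four cases, including (3), which the paper only sketches. Then $L_1(s(z_2))$ converges uniformly and what is left is $\int_{(t_2D_2)^2}C_2(x_2-y_2)^{R(1-\rho_1/d_1)}\,dx_2\,dy_2$, handled by dominated convergence in (3) and Proposition~\ref{lem:cactus} in (4).

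The paper's route, in turn, does not need a single lag. For the cyclic integrals with $k\ge3$ in Proposition~\ref{lem:cactus gneiting nonclt}, each edge carries its own scale $C_2(x_2^{(i)}-x_2^{(i+1)})^{1/d_1}$. The first block then cannot be collapsed into one one-block quantity, whereas the partition argument carries over.

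Some details need repair.

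(i) Your displayed reduction has the wrong power of $C_2$. Besides the Jacobian $C_2^{-1}$, the covariogram rescales as $g_{d_1}(t_1D_1,uC_2^{-1/d_1})=C_2^{-1}g_{d_1}(sD_1,u)$. The correct identity is $\Var(Y(t))=R!\int g_{d_2}(t_2D_2,z_2)C_2(z_2)^{R-2}I_1(s(z_2))\,dz_2$. Your later lines implicitly use this (the factor that ``cancels'' in (1)--(2), the stray $C_2^{-1}$ in (3)--(4)), so your final exponents are right. As written, however, the display would give $\int g_{d_2}C_2^R$ in case (1).

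(ii) Where $c_2$ is not assumed regularly varying (cases (1) and (3)), monotonicity alone does not turn Assumption~\ref{ass:KEY} into $s_{\min}(t)\to\infty$ when $\diam(D_2)>1$. Use instead that $1/c_2$ is positive with completely monotone derivative, hence concave, so $c_2(Mr)\ge c_2(r)/M$ for $M\ge1$.

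(iii) Since $C_2\le1$, you have $s(z_2)\le t_1$, not $\ge t_1$. The Potter bound you actually need is therefore $L_1(s)/L_1(t_1)\le A\,C_2(z_2)^{-\delta/d_1}$, and your dominating function in (3) is $C_2^{R(1-\rho_1/d_1)-R\delta/d_1}$. Its integrability does not follow from $C_2\in L^{R(1-\rho_1/d_1)}(\R^{d_2})$; the ``$\pm$'' hides that only the bad sign occurs. This is harmless if $L_1$ has a nonzero limit. For a genuinely slowly varying $L_1$ you need something extra, such as $C_2\in L^{R(1-\rho_1/d_1)-\epsilon}(\R^{d_2})$. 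The paper's sketch of (3) glosses over the same point.

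(iv) In case (4) the slowly varying factor must be normalized at $t_1c_2(t_2)^{1/d_1}$, not at $t_1$. Relative to $L_1(t_1)$, the Potter bound grows like $c_2(t_2)^{-R\delta/d_1}$ and dominates nothing. With the right normalization, Proposition~\ref{prop:UCTregular} and Potter bounds in $w=z_2/t_2$ go through, because $R\rho_2(1-\rho_1/d_1)<d_2$ leaves room. The resulting normalizer, $L_1(t_1c_2(t_2)^{1/d_1})^R$ times a slowly varying function of $t_2$, coincides with the paper's. Your closing sentence does not show how it becomes a product $L_1''(t_1)L_2''(t_2)$, which is automatic only when $L_1$ is asymptotically constant.
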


\begin{remark}
\label{rem:R1variance}
    When $R=1$, the contribution of the second variable $t_2$ is always of order $\vol(t_2D_2)^2$. Then, \begin{equation*}
        \Var(Y(t)) \sim t_2^{2d_2} \vol(D_2)^2 \, \Var\left( \int_{t_1D_1} B_{x_1}^\prime \,  dx_1 \right), \qquad B_{x_1}^\prime:=B_{x_1,x_2}, 
    \end{equation*}
    for a fixed $x_2\in\R^{d_2}$. Then it may be understood using classical results for additive functionals of Gaussian fields: see, e.g., \cite{BM83,T79,DM79,MN22}.
\end{remark}

\begin{remark}
    When $C_1$ and $C_2$ are asymptotically power laws, that is, their slowly varying component is asymptotically constant, the slowly varying functions appearing in the previous equations become constants that can be explicitly computed.
\end{remark}

\begin{remark}[New intermediate range] \label{rem:newrange}
    The critical exponent arising in the third and fourth cases (corresponding to the lower left and lower right regions in Figure~\ref{fig:lt} when $R=2$) can be read as follows. When $C_1\notin L^{R}(\mathbb R^{d_1})$ (left region), the parameter of regular variation $-\rho_1$ is such that $d_1-{R}\rho_1 \ge 0$. This implies that \begin{equation*}
         (R-1)\rho_2 \le{(R-1)}\rho_2 + \rho_2\left(\frac{d_1 - R \rho_1}{d_1}\right) =  {R} \underbrace{\rho_2\left(1-\frac{\rho_1}{d_1}\right)}_{=:\rho_2^*} \le R\rho_2.
    \end{equation*}
    We can interpret the exponent $\rho_2^*$ as the \emph{effective regular variation parameter} of $C_2$, meaning that the contribution to the variance given by the correlation along the second block is the same as in a separable model $C_1\otimes C_2^*$, where $C_2^*$ is a regularly varying covariance function with parameter $-\rho_2^*$. 
    Conversely, when $C_1\in L^{R}(\mathbb R^{d_1})$ (right region), the effective regular variation parameter is $-\rho_2(1-1/R)$. 
    In both cases, the model built from a Gneiting covariance function having regularly varying factors $C_1$ and $C_2$, with parameters $-\rho_1$ and $-\rho_2$, behaves as a separable model with regularly varying factors $C_1$ and $C_2^*$, with parameters $-\rho_1$ and $-\rho_2^*$, with a longer range dependence along the second block: $\rho_2^* < \rho_2$.
\end{remark}

We are now in a position to prove \cref{prop:variance}. The proof is split into three parts: the proof of the first two cases, when $C_1\in L^R(\R^{d_1})$; the proof of the fourth case; a sketch of the proof of the third one, since it's analogous to the one of the fourth case. 

\textbf{Notation.} In the following, we will write $t_1$ and $t_2$ omitting the dependence on $t$. Recall that $t_1,t_2\to \infty$ as $t\to\infty$, as in \Cref{ass:KEY}.

\begin{proof}[Proof of \cref{prop:variance} \emph{(1)} and \emph{(2)}]
Consider the covariogram $g_{d_i}(D_i,x_i)$, $x_i\in\R^{d_i}$ for $i=1,2$, as in \cref{def:covariogram}. By change of variables $x_i-y_i\mapsto z_i$, $i=1,2$, and \cref{prop:covariogram} (i), we can write 
\begin{multline}
\label{eq:var}
        \frac{ \Var(Y(t))}{t_1^{d_1}  t_2^{d_2} }   = R! \int_{\mathbb R^{d_2}} c_2(\|z_2\|)^{R-1} g_{d_2} \left(D_2 , \frac{z_2}{t_2} \right) \\ \times \int_{\mathbb R^{d_1}} c_1(\|z_1\|)^{R} g_{d_1}\left( D_1, \frac{z_1}{t_1c_2(\|z_2\|)^{1/d_1}} \right) dz_1 \, dz_2.
    \end{multline}
We want to show that $\eqref{eq:var}\to \ell \in (0,\infty)$ as $t\to\infty$. By \cref{prop:covariogram} (iv), we have the pointwise convergence \begin{multline*}
    c_2(\|z_2\|)^{R-1} g_{d_2} \left(D_2 , \frac{z_2}{t_2} \right) c_1(\|z_1\|)^{R} g_{d_1}\left( D_1, \frac{z_1}{t_1c_2(\|z_2\|)^{1/d_1}} \right) \\ \longrightarrow c_2(\|z_2\|)^{R-1} \vol(D_2) c_1(\|z_1\|)^{R} \vol(D_1),
\end{multline*}
for any $(z_1,z_2)\in\R^{d_1}\times \R^{d_2}$. Since the covariograms are uniformly bounded by $g_{d_i}(D_i,\cdot)\le \vol(D_i)$, $i=1,2$, and $C_1\in L^R(\R^{d_1})$ and $C_2\in L^{R-1}(\R^{d_2})$, we may conclude by dominated convergence that $\eqref{eq:var} \to R! \vol(D_1)\vol(D_2) \|C_1\|^R_{L^R(\R^{d_1})} \|C_2\|^{R-1}_{L^{R-1}(\R^{d_2})}$. 

\medskip
\noindent
Assume now that $C_2\notin L^{R-1}(\R^{d_2})$, but is regularly varying with index $-\rho_2<0$ such that $\rho_2<d_2/(R-1)$. By change of variables  $x_2,y_2\mapsto t_2x_2,t_2y_2$, and $x_1-y_1\mapsto z_1$, which introduces the covariogram $g_{d_1}(D_1,\cdot)$ as in \cref{def:covariogram}, we find \begin{multline*}
    \frac{ \Var(Y(t))}{t_1^{d_1}  t_2^{2d_2} c_2(t_2)^{R-1}} = R! \int_{(D_2)^2} \frac{c_2(t_2\|x_2-y_2\|)^{R-1}}{c_2(t_2)^{R-1}} \\ \times  \int_{\mathbb R^{d_1}} c_1(\|z_1\|)^{R} g_{d_1}\left( D_1, \frac{z_1}{t_1c_2(t_2\|x_2-y_2\|)^{1/d_1}} \right) dz_1 \, dx_2 \, dy_2 . 
\end{multline*} 
Since $D_1$ is convex, by \cref{prop:covariogram} (iii), we have \begin{equation*}
        g_{d_1}\left( D_1, \frac{z_1}{t_1c_2(t_2\|x_2-y_2\|)^{1/d_1}} \right) \ge g_{d_1}\left( D_1, \frac{z_1}{t_1c_2(t_2\diam(D_2))^{1/d_1}} \right), \quad \forall x_2,y_2\in D_2.
    \end{equation*} 
Moreover, for any $\epsilon>0$, we may find $t$ (hence, $t_1,t_2$) large enough such that\footnote{It is not immediate: by positivity, the left-hand side is larger than $\int_{K_\epsilon} c_1(\|z_1\|)^{R} g_{d_1}\left( D_1, \frac{z_1}{t_1c_2(t_2M)^{1/d_1}} \right) dz_1$,
where $K_\epsilon$ is a large enough compact set and $M:=\diam (D_2)$. By compactness, we have the upper bound $g_{d_1}\left( D_1, \frac{z_1}{t_1c_2(t_2\diam(D_2))^{1/d_1}} \right) \ge \vol(D_1) - \epsilon$, for any $z_1\in K_\epsilon$. Since $C_1\in L^R(\R^{d_1})$, we may chose $K_\epsilon$ so that $\|C_1\|_{L^R(\R^{d_1}\setminus K_\epsilon)}$ is small, which allow us to conclude.} \begin{equation*}
    \int_{\mathbb R^{d_1}} c_1(\|z_1\|)^{R} g_{d_1}\left( D_1, \frac{z_1}{t_1c_2(t_2\diam(D_2))^{1/d_1}} \right) dz_1 \ge \int_{\mathbb R^{d_1}} c_1(\|z_1\|)^{R} dz_1 \, (\vol(D_1) - \epsilon).
\end{equation*}
Then, applying \cref{lem:cactus} ($k=2$), we find that  \begin{equation*}
    \lim_{t\to\infty} \int_{(D_2)^2} \frac{c_2(t_2\|x_2-y_2\|)^{R-1}}{c_2(t_2)^{R-1}} dx_2 \, dy_2 = \int_{(D_2)^2} \|x_2-y_2\|^{-\rho_2(R-1)} dx_2 \, dy_2 =: \ell^{\prime \prime} < \infty,
\end{equation*} 
implying in turn that
\begin{align*}
    & \lim_{t\to\infty} \frac{ \Var(Y(t))}{t_1^{d_1}  t_2^{2d_2} c_2(t_2)^{R-1}}  \ge  R! (\vol(D_1)-\epsilon) \|C_1\|^R_{L^R(\R^{d_1})} \, \ell^{\prime \prime};\\
    & \lim_{t\to\infty} \frac{ \Var(Y(t))}{t_1^{d_1}  t_2^{2d_2} c_2(t_2)^{R-1}}  \le R! \vol(D_1) \|C_1\|^R_{L^R(\R^{d_1})} \, \ell^{\prime\prime}.
\end{align*} 
By arbitrariness of $\epsilon$, we conclude.
\end{proof}

\begin{remark}\label{rem:useful var C1 int}
As soon as $C_1\in L^R(\R^{d_1})$, with no further assumptions on $C_2$, when $t\to\infty$, \begin{multline*}
    \Var(Y(t)) \asymp t_1^{d_1} t_2^{d_2} \int_{\mathbb R^{d_2}} c_2(\|z_2\|)^{R-1} g_{d_2} \left(D_2 , \frac{z_2}{t_2} \right) dx_2 \, dy_2 
     = t_1^{d_1} \int_{(t_2D_2)^2} C_2(x_2-y_2)^{R-1} dx_2 \, dy_2.
\end{multline*}
\end{remark}

\begin{proof}[Proof of \cref{prop:variance}: case \emph{(4)}]

If $C_1$ and $C_2$ are radial covariance functions with radial components $c_1$ and $c_2$, respectively, as in \Cref{ass:gneiting}, that are regularly varying with index $-\rho_1<0$ and $-\rho_2$, respectively, then \cref{prop:UCTregular} implies that, as $t \to\infty$ (again, we write $t_1$ and $t_2$ omitting the dependence on $t$ for brevity), \begin{equation*} 
    t_1^{2d_1} t_2^{2d_2} c_2(t_2)^R c_1(t_1 c_2(t_2)^{1/d_1})^R \sim  t_1^{2d_1-R\rho_1}  t_2^{2d_2-R\rho_2(1-\frac{\rho_1}{d_1})} \underbrace{L_1(t_1c_2(t_2)^{1/d_1})^{R} L_2(t_2)^{R(1-\frac{\rho_1}{d_1})}}_{\text{slowly var.}}.
\end{equation*}
We remark that the slowly varying component grows slower than any polynomial.
After a change of variable, we can write \begin{align} 
    &  \frac{\Var(Y_{t_1,t_2})}{t_1^{2d_1} t_2^{2d_2} c_2(t_2)^R c_1(t_1 c_2(t_2)^{1/d_1})^R}  = R! \int_{(D_2)^2} \frac{c_2(t_2\|x_2-y_2\|)^R}{c_2(t_2)^R}   \times \nonumber \\
    & \quad \times \medmath{\left(\int_{(D_1)^2} \frac{c_1(t_1\|x_1-y_1\| c_2(t_2\|x_2-y_2\|)^{1/d_1})^R}{c_1(t_1 c_2(t_2\|x_2-y_2\|)^{1/d_1})^R}\frac{c_1(t_1 c_2(t_2\|x_2-y_2\|)^{1/d_1})^R}{c_1(t_1 c_2(t_2)^{1/d_1})^R} dx_1 dy_1 \right) dx_2 dy_2 }.
    \label{eq:var2}
\end{align}
We are going to prove that $\eqref{eq:var2}\to \ell^\prime \in (0,\infty)$ as $t \to\infty$, where \begin{equation*}
    \ell^\prime = R! \, \int_{(D_1)^2} \|x_1-y_1\|^{-\rho_1R}  dx_1 dy_1 \times \int_{(D_2)^2} \|x_2-y_2\|^{-\rho_2R(1-\rho_1/d_1)} dx_2 dy_2,
\end{equation*}
which is finite since $\rho_1R<d_1$ and $\rho_2R(1-\rho_1/d_1)<d_2$ by assumption. First, we prove pointwise convergence. For any $x_1,y_1\in D_1$ and $x_2,y_2\in D_2$, the regular variation property of $c_1$ and $c_2$ implies that (recall \Cref{ass:KEY}) \begin{align*}
   &   \lim_{t\to\infty} \frac{c_2(t_2\|x_2-y_2\|)^R}{c_2(t_2)^R} = \|x_2-y_2\|^{-R\rho_2},  \\
   &  \lim_{t\to\infty}  \frac{c_1(t_1\|x_1-y_1\| c_2(t_2\|x_2-y_2\|)^{1/d_1})^R}{c_1(t_1 c_2(t_2\|x_2-y_2\|)^{1/d_1})^R} = \|x_1-y_1\|^{-R\rho_1};
\end{align*}
applying, in addition, \cref{prop:UCTregular}, we also have that
\begin{align*}
      \lim_{t\to\infty} \frac{c_1(t_1 c_2(t_2\|x_2-y_2\|)^{1/d_1})^R}{c_1(t_1 c_2(t_2)^{1/d_1})^R}  & = \lim_{t\to\infty} \frac{c_1(t_1 c_2(t_2)^{1/d_1} (\|x_2-y_2\|^{-\rho_2/d_1}+o(t)))^R}{c_1(t_1 c_2(t_2)^{1/d_1})^R} \\
    & = \|x_2-y_2\|^{R\rho_1\rho_2/d_1}.
\end{align*}
To conclude, we will prove that the integrand function is bounded by an integrable one, see \eqref{eq:since}, and conclude by generalised dominated convergence theorem. The proof is divided into three main steps. In \textbf{I}, we define a partition of the integration domain into two subsets; then, in \textbf{II}, we show that in the first subset the covariance can be approximated by a power law; finally, in \textbf{III}, we show that the integral over the complementary is negligible.

\textbf{I. Partition.}
We fix constants $A$ and $\delta$, and by \cref{prop:potter}, we find two positive constants $X^{(1)}$ and $X^{(2)}$ such that Potter bounds holds for $c_i|_{[X^{(i)},+\infty)}$, $i=1,2$. Then, we define \begin{align*}
    & \mathcal D_{t_2} := \left\{ (x,y)\in(D_2)^2 : \|x-y\| \ge \frac{X^{(2)}}{t_2}\right\} , \\
    & \mathcal F_{t_1,t_2} : = \left\{ (x,y)\in (D_1)^2 :  \|x-y\| \ge \frac{X^{(1)}}{M^{1/d_1} t_1c_2(t_2)^{1/d_1}}  \right\}
\end{align*}
where $M := A^{-1} \diam(D_2)^{-\rho_2+\delta}$. Remark that as $t_1,t_2\to+\infty$, we have $\mathcal D_{t_2} \uparrow (D_2)^2$ and $\mathcal F_{t_1,t_2} \uparrow (D_1)^2$;
therefore, its complementary vanishes: $(D_1\times D_2)^2 \setminus \mathcal F_{t_1,t_2} \times \mathcal D_{t_2}\downarrow \emptyset$. Remark also that the complementary can be written as union of disjoint subsets, too: \begin{equation*}
    (D_1\times D_2)^2 \setminus \mathcal F_{t_1,t_2} \times \mathcal D_{t_2}  = \left( (D_1)^2\setminus \mathcal F_{t_1,t_2} \times \mathcal D_{t_2} \right) \cup \left( (D_1)^2 \times (D_2)^2\setminus \mathcal D_{t_2} \right).
\end{equation*}
Hence, \begin{align*}
    \eqref{eq:var2} = \iint_{\mathcal F_{t_1,t_2} \times \mathcal D_{t_2} } \dots \, + \iint_{(D_1)^2\setminus \mathcal F_{t_1,t_2} \times \mathcal D_{t_2}} \dots \, + \iint_{(D_1)^2 \times (D_2)^2\setminus \mathcal D_{t_2}}\dots \, =: I_1 + I_2 + I_3. 
\end{align*}

\textbf{II. Power law approximation.}  We show that we may choose $t_1$ and $t_2$ large enough so that, for every choice of $(x_1,y_1)\in\mathcal F_{t_1,t_2}$ and $(x_2,y_2)\in\mathcal D_{t_2}$, the following five inequalities hold: \begin{align*}
    & t_2  \ge X^{(2)}, 
    \qquad \qquad t_1 c_2(t_2)^{1/d_1} \ge X^{(2)}, \qquad \qquad 
    t_2 \|x-y\| \ge X^{(2)} , \\
    & t_1\|x_1-y_1\| c_2(t_2\|x_2-y_2\|)^{1/d_1} \ge X^{(1)},  
    \qquad \qquad t_1 c_2(t_2\|x_2-y_2\|)^{1/d_1} \ge X^{(1)},
\end{align*}
where $X^{(1)}$ and $X^{(2)}$ are constants that solely depend on $c_1$ and $c_2$.
If $\mathcal F_{t_1,t_2}$ and $\mathcal D_{t_2}$ are not empty, we can apply Potter bounds (\cref{prop:potter}) to trade $c_1$ and $c_2$ for power laws.

The first three are satisfied since we can choose $t_1$ and $t_2$ arbitrarily large, and thanks to \Cref{ass:KEY}. Now, we explain the fourth and fifth ones. We need to bound \begin{equation*}
    \inf_{x_2,y_2\in\mathcal D_{t_2}} c_2(t_2\|x_2-y_2\|).
\end{equation*}
Since \cref{prop:potter} holds for $c_2$ on the subset $\mathcal D_{t_2}$, we have that for any $\delta$, we find a constant $A$ such that \begin{align*}
    & \inf_{x_2,y_2\in\mathcal D_{t_2}} c_2(t_2\|x_2-y_2\|) \ge A^{-1} c_2(t_2) \inf_{x_2,y_2\in\mathcal D_{t_2}} \|x_2-y_2\|^{-\rho_2 \pm \delta}, \\
    & \inf_{x_2,y_2\in\mathcal D_{t_2}} c_2(t_2\|x_2-y_2\|) \le A c_2(t_2) \inf_{x_2,y_2\in\mathcal D_{t_2}} \|x_2-y_2\|^{-\rho_2 \pm \delta},
\end{align*}
Since $-\rho_2$ is negative, we may choose $\delta$ small enough so that $-\rho\pm\delta$ is negative. Then, \begin{equation*}
     c_2 (t_2\|x_2-y_2\|) \ge  c_2(t_2) A^{-1} \diam(D_2)^{-\rho_2 \pm \delta} \qquad \forall x_2,y_2\in D_2.
\end{equation*}
Therefore, thanks to \Cref{ass:KEY}, the fifth bound holds for $t_1$ large enough, and the fourth one holds for any $x_1,y_1\in \mathcal F_{t_1,t_2}$ by construction. Therefore, we can apply \cref{prop:potter} to any of the three fractions in \eqref{eq:var2}, and write, for $t_1,t_2$ large enough, the following upper bound: \begin{align}
    & I_1 = \int_{\mathcal D_{t_2}} \frac{c_2(t_2\|x_2-y_2\|)^R}{c_2(t_2)^R}   \times \nonumber\\
    & \times \medmath{\left(\int_{\mathcal F_{t_1,t_2}} \frac{c_1(t_1\|x_1-y_1\| c_2(t_2\|x_2-y_2\|)^{1/d_1})^R}{c_1(t_1 c_2(t_2\|x_2-y_2\|)^{1/d_1})^R}\frac{c_1(t_1 c_2(t_2\|x_2-y_2\|)^{1/d_1})^R}{c_1(t_1 c_2(t_2)^{1/d_1})^R} dx_1 dy_1 \right) dx_2 dy_2} \nonumber \\
    & \le A^{-3} \int_{\mathcal D_{t_2}} \|x_2-y_2\|^{R(-\rho_2\pm\delta)} \times  \nonumber \\
    & \qquad \times \left(\int_{\mathcal F_{t_1,t_2}} \|x_1-y_1\|^{R(-\rho_1\pm\delta)} \left(\frac{c_2(t_2\|x_2-y_2\|)^{R/d_1}}{c_2(t_2)^{R/d_1}} \right)^{-\rho_1\pm \delta} dx_1dy_1 \right) dx_2 dy_2 \nonumber \\
    & \le A^{-3}  \int_{\mathcal D_{t_2}} \|x_2-y_2\|^{R(-\rho_2\pm\delta)(1 + (-\rho_1\pm\delta)/d_1)}  dx_2 dy_2  \int_{\mathcal F_{t_1,t_2}} \|x_1-y_1\|^{R(-\rho_1\pm\delta)}  dx_1dy_1.\label{eq:since}
\end{align}
Since, by hypothesis, $ \rho_1 R <d_1$ and $\rho_2 > \frac{d_1d_2}{R(d_1-\rho_1)}$, we may choose $\delta$ small enough so that $R(\rho_1\pm\delta)< d_1 $, and that $R(-\rho_2\pm\delta)(1 + (-\rho_1\pm\delta)/d_1) > - d_2$, we have that \eqref{eq:since} is bounded.

\textbf{III. The reminder.} We shall prove that 
\begin{align*} 
    I_2& := \medmath{\int_{\mathcal D_{t_2}} \frac{c_2(t_2\|x_2-y_2\|)^R}{c_2(t_2)^R}    \left(\int_{(D_1)^2\setminus \mathcal F_{t_1,t_2}} \frac{c_1(t_1\|x_1-y_1\| c_2(t_2\|x_2-y_2\|)^{1/d_1})^R}{c_1(t_1 c_2(t_2)^{1/d_1})^R} dx_1 dy_1 \right) dx_2 dy_2,}\\
    I_3& := \medmath{\int_{(D_2)^2\setminus \mathcal D_{t_2}} \frac{c_2(t_2\|x_2-y_2\|)^R}{c_2(t_2)^R}   \times \left(\int_{(D_1)^2} \frac{c_1(t_1\|x_1-y_1\| c_2(t_2\|x_2-y_2\|)^{1/d_1})^R}{c_1(t_1 c_2(t_2)^{1/d_1})^R} dx_1 dy_1 \right) dx_2 dy_2,}
\end{align*}
both converge to $0$ as $t \to\infty$.
First term, $I_2$. Since we are integrating over $\mathcal D_{t_2}$, we can apply Potter bounds for the first fraction, involving $c_2$. Since the volume of the integration domain vanishes as $t_1\to\infty$, it's enough to bound $c_1\le 1$. Then, applying \cref{prop:potter} to $c_2$, we find $A>0$, such that for $t_2$ large enough
\begin{align*}
       & \int_{\mathcal D_{t_2}} \frac{c_2(t_2\|x_2-y_2\|)^R}{c_2(t_2)^R}    \left(\int_{(D_1)^2\setminus \mathcal F_{t_1,t_2}} \frac{c_1(t_1\|x_1-y_1\| c_2(t_2\|x_2-y_2\|)^{1/d_1})^R}{c_1(t_1 c_2(t_2)^{1/d_1})^R} \medmath{dx_1 dy_1} \right) \medmath{dx_2 dy_2} \\
       & \le A \underbrace{\int_{\mathcal D_{t_2}} \|x_2-y_2\|^{R(-\rho_2\pm\delta)} dx_2 dy_2}_{\text{bounded}} \times \frac1{c_1(t_1 c_2(t_2)^{1/d_1})^R} \vol((D_1)^2\setminus \mathcal F_{t_1,t_2}), 
       \end{align*}
       where the boundedness comes from dominated convergence. The domain $(D_1)^2\setminus \mathcal F_{t_1,t_2}$ is comparable to $D_1 \times $ a ball of radius $\asymp \frac1{t_1c_2(t_2)^{1/d_1}}$; hence, its volume is comparable to $ \frac1{t_1^{d_1} c_2(t_2)}$. 
       Therefore, up to some other constant $A>0$, recalling \Cref{ass:KEY} and since $c_1$ is asymptotically a power law of exponent $-\rho_1<0$ such that $R\rho_1<d_1$, we have that, as $t\to\infty$,
       \begin{align*}
       I_2 & \le  A \frac1{ \left( t_1c_2(t_2)^{1/d_1}\right)^{d_1-R\rho_1}} \longrightarrow 0.
\end{align*}
Second term, $I_3$. Remark that \begin{equation}
    c_2(X^{(2)}) \le \inf_{x_2,y_2\in (D_2)^2\setminus \mathcal D_{t_2}} c_2(t_2\|x_2-y_2\|) \le 1. \label{eq:nice lower}
\end{equation}
The right inequality is immediate, and the left one follows from the fact that $c_2$ attains its maximum on $\textrm{diag}((D_2)^2)$. Moreover,   $(D_2)^2\setminus \mathcal D_{t_2} \downarrow \textrm{diag}((D_2)^2)$. Hence, we may suppose that $\inf_{x_2,y_2\in (D_2)^2\setminus \mathcal D_{t_2}} c_2(t_2\|x_2-y_2\|)$ is always bigger than the value at $X^{(2)}$ (otherwise we may split the domain again, and the subset where the inequality fails decreases to $\emptyset$ as $t_2\to\infty$). Therefore, we may write \begin{align*}
    I_3 & =  \int_{(D_2)^2\setminus \mathcal D_{t_2}} \frac{c_2(t_2\|x_2-y_2\|)^R}{c_2(t_2)^R}   \times \Bigg(\int_{\widetilde{\mathcal F_{t_1}}} \frac{c_1(t_1\|x_1-y_1\| c_2(t_2\|x_2-y_2\|)^{1/d_1})^R}{c_1(t_1 c_2(t_2)^{1/d_1})^R} dx_1 dy_1   \\
    & \hspace{2cm} + \int_{(D_1)^2\setminus \widetilde {\mathcal F_{t_1}}} \frac{c_1(t_1\|x_1-y_1\| c_2(t_2\|x_2-y_2\|)^{1/d_1})^R}{c_1(t_1 c_2(t_2)^{1/d_1})^R} dx_1 dy_1 \Bigg) dx_2 dy_2 \\
    & = : I_4 + I_5,
\end{align*}
where $\widetilde{\mathcal F_{t_1}} := \{(x_1,y_1)\in (D_1)^2 : \|x_1-y_1\| \ge \frac{X^{(1)}}{t_1c_2(X^{(2)})^{1/d_1}}\}$. In the first addendum, \cref{prop:potter} (right inequality) applies to $c_1$: for some constant $A$ and $t_1$, $t_2$ large enough, we have \begin{multline}
   I_4 \le A  \int_{(D_2)^2\setminus \mathcal D_{t_2}} \left(\frac{c_2(t_2\|x_2-y_2\|)}{c_2(t_2)}\right)^{R(1+(-\rho_1\pm\delta)/d_1)}  dx_2dy_2 \times \\ \times \int_{\widetilde{\mathcal F_{t_1}}} \|x_1-y_1\|^{R(-\rho_1\pm\delta)} dx_1dy_1.
\end{multline}
Since $R\rho_1 < d_1$, we may choose $\delta$ small enough so that the second factor is bounded. Remark also that, as $t_1\to\infty$, $(D_1)^2\setminus \widetilde{  \mathcal F}_{t_1}\downarrow \emptyset$ comparable to a ball of radius $t_1^{-1}$ in $\R^{d_1}$. For the first factor, we bound the numerator $c_2\le 1$ and obtain, up to some other constant $A$, that \begin{align*}
    I_4 \le A  \frac1{c_2(t_2)^{R(1+(-\rho_1\pm\delta)/d_1)}} \vol( (D_2)^2\setminus \mathcal D_{t_2} ) \le A \frac1{t_2^{d_2-\rho_2R(1+(-\rho_1\pm\delta)/d_1)}} \longrightarrow 0 ,
\end{align*}
since the volume of $(D_2)^2\setminus \mathcal D_{t_2}$ decreases as the volume of a ball of radius $\frac1{t_2}$ in $\R^{d_2}$. Convergence to zero follows by taking $\delta$ small enough, since we are assuming that $\rho_2< \frac{d_1d_2}{R(d_1-\rho_1)}$. For the term $I_5$, we use the trivial bound $c_i\le 1$, for $i=1,2$ and compare the denominator with the growth of the volume of the integration domain. Recall that $c_i$ is asymptotically power law with exponent $-\rho_i<0$, $i=1,2$. Then, up to some constant $A$, 
\begin{align*}
    I_5 & =\int_{(D_2)^2\setminus \mathcal D_{t_2}} \frac{c_2(t_2\|x_2-y_2\|)^R}{c_2(t_2)^R}   \times \int_{(D_1)^2\setminus \widetilde{\mathcal F_{t_1}}} \frac{c_1(t_1\|x_1-y_1\| c_2(t_2\|x_2-y_2\|)^{1/d_1})^R}{c_1(t_1 c_2(t_2)^{1/d_1})^R} dx_1 dy_1 \\
    & \le A  \frac1{c_2(t_2)^{R}} \vol ((D_2)^2\setminus \mathcal D_{t_2}) \, \times \,\frac1{c_1(t_1 c_2(t_2)^{1/d_1})^{R}} \vol((D_1)^2\setminus \widetilde{\mathcal F_{t_1}}) \\
    & \le A \frac1{t_1^{d_1-R\rho_1}} \, \frac1{t_2^{d_2 -\rho_2R(1-\rho_1/d_1)}}  \longrightarrow 0.
\end{align*}
This allows us to conclude the proof.
\end{proof}

\begin{proof}[Sketch of the proof of \cref{prop:variance} \emph{(3)}]
   Recall that $C_1\notin L^R(\R^{d_1})$, with regular variation of parameter $-\rho_1$, and that $C_2\in L^{R(1-\rho_1/d_1)}$. Changing variables $x_1,y_1\to t_1x_1,t_1y_1$, and $x_2-y_2\to z_2$, we may write \begin{multline} 
   \label{eq:running}
       \frac{1}{t_2^{d_2}t_1^{2d_1}c_1(t_1)^R}  \int_{(t_1D_1\times t_2D_2)^2} c_2(\|x_2-y_2\|)^R c_1(\|x_1-y_1\| c_2(\|x_2-y_2\|)^{1/d_1})^R \, \medmath{dx_1dx_2dy_1dy_2} \\
       \\ = \int_{\R^{d_2}} c_2(\|z_2\|)^R g_2(z_2/t_2) \times  \int_{(D_1)^2} \frac{c_1(t_1\|x_1-y_1\| c_2(\|z_2\|)^{1/d_1})^R}{c_1(t_1)^R} dx_1dy_1 dz_2,
   \end{multline} 
   where $g_2(z) := g_2(D_2,z)$ denotes the covariogram of $D_2$, as defined in \cref{def:covariogram}. For fixed $x_i,y_i\in \R^{d_i}$, since $c_1$ is regularly varying with parameter $-\rho_1<0$, we have that, as $t \to\infty$, \begin{multline*}
       c_2(\|z_2\|)^R g_2(z_2/t_2) \times \frac{c_1(t_1\|x_1-y_1\| c_2(\|z_2\|)^{1/d_1})^R}{c_1(t_1)^R} \\ 
       \rightarrow c_2(\|z_2\|)^{R(1-\rho_1/d_1)} \vol(D_2) \times \|x_1-y_1\|^{-R\rho_1}.
   \end{multline*}
   With analogous (but more tedious) computations to the ones of the proof of case \emph{(4)}, one can also prove that a domination occurs. Therefore, as $t \to\infty$, we have \begin{equation*}
       \eqref{eq:running} \rightarrow \vol(D_2) \int_{\R^{d_2}} C_2(z_2)^{R(1-\rho_1/d_1)} \, dz_1 \times \int_{(D_1)^2} \|x_1-y_1\|^{-R\rho_1} \, dx_1dy_1 < \infty. 
   \end{equation*}
\end{proof}

\subsection{Proofs of limit theorems}
\label{subsec:proofLT}

In this Section we prove our main results, that are Theorems \ref{thm:ltBREUERMAJOR}, \ref{thm:ltBALTO}, and \ref{thm:ltROSENBLATT}. 
Preliminary to that, in \cref{subsec:reductionapply}, we show a reduction principle that will be used in the proof of each result. Next, in \cref{subsec:central}, we will prove Theorems \ref{thm:ltBREUERMAJOR} and \ref{thm:ltBALTO}. Finally, in \cref{subsec:proofNCLT}, we will prove \cref{thm:ltROSENBLATT}.

\textbf{Notation.} In the following, we will write $t_1$ and $t_2$ omitting the dependence on $t$. Recall that $t_1,t_2\to \infty$ as $t\to\infty$, as in \Cref{ass:KEY}.

\subsubsection{Dominating chaos}\label{subsec:reductionapply} 
In this subsection we show that, under suitable assumptions on $B$ (always satisfied in our setting), the following functionals share the same limiting distribution as $t \rightarrow\infty$
\begin{equation*}
\widetilde Y(t) = \widetilde{\left(\int_{t_1D_1\times t_2D_2} \f(B_x) dx\right) }, \quad \text{ and } \quad \widetilde Y_{t_1,t_2}[R]:=sgn(a_R)\widetilde{\left(\int_{t_1D_1\times t_2D_2} H_R(B_x) dx\right)}\,,
\end{equation*}
where $R$ Hermite rank of $\varphi$. This is not a novel result when $C^R\ge0$ (as in the Gneiting case) and some additional minimal assumptions on $C$ are satisfied, see e.g \cite{MN22,LRM23,pdom24,MainiRossiZheng2025}. When this assumptions are not satisfied, one can still hope in other techniques to control the first odd chaoses that avoid positivity assumptions on $C^R$, see e.g. \cite{GMT24,Maini2024, Gass2025}. 
What we need for our purposes is collected in the following proposition.

\begin{proposition}
\label{prop:reduction}
Let $\widetilde Y(t)$, $\widetilde Y(t)[R]$ be as above. Suppose that Assumptions \ref{ass:gneiting} and \ref{ass:KEY} hold. Suppose also that $C$ is as in \cref{prop:variance}, cases \emph{(2)}-\emph{(3)}-\emph{(4)}. Then, $\E [(\widetilde Y(t)-\widetilde Y(t)[R])^2]\to 0$, as $t\to\infty$. Therefore, they share the same limit in distribution.
\end{proposition}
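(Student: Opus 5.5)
The reduction rests on the orthogonality of Wiener chaoses. By \eqref{eq: Y deco}, $Y(t)-\E[Y(t)] = \sum_{q\ge R} Y(t)[q]$, and the components are pairwise uncorrelated. Write $V_q(t):=\Var(Y(t)[q]) = q!a_q^2\int_{(t_1D_1\times t_2D_2)^2} C(x-y)^q\,dx\,dy$. A direct expansion of the square then gives
\begin{equation*}
\E\big[(\widetilde Y(t)-\widetilde Y(t)[R])^2\big] = 2-2\sqrt{\frac{V_R(t)}{\sum_{q\ge R}V_q(t)}}.
\end{equation*}
So it suffices to prove that $\sum_{q\ge R+1}V_q(t) = o(V_R(t))$ as $t\to\infty$.

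The key structural fact is $C\ge0$. This holds because $c_1$ is completely monotone and $C_2>0$ under \Cref{ass:gneiting}, and by unit variance also $C\le 1$. Hence $C^q\le C^{R+1}$ for every $q\ge R+1$, and therefore
\begin{equation*}
\sum_{q\ge R+1}V_q(t)\le \Big(\sum_{q\ge R+1}q!a_q^2\Big)\int_{(t_1D_1\times t_2D_2)^2}C(x-y)^{R+1}\,dx\,dy \le \E[\varphi(N)^2]\,\frac{V_{R+1}^{H}(t)}{(R+1)!},
\end{equation*}
where $V^H_{R+1}$ denotes the variance of the functional with $\varphi=H_{R+1}$. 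The problem therefore reduces to comparing $\int C^{R+1}$ with $\int C^R$ over the growing domain, that is, to showing that the $(R+1)$-st chaos is of strictly smaller order than the $R$-th.

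The comparison uses \cref{prop:variance}, applied once with exponent $R$ and once with exponent $R+1$. In each case (2)--(4), $\int C^R$ grows faster than in the short-range regime. For exponent $R+1$, I would apply whichever case of \cref{prop:variance} applies with $R+1$ in place of $R$, and then check that the resulting rate is strictly smaller, up to slowly varying factors that Potter bounds (\cref{prop:potter}) let me absorb.

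\begin{itemize}
\item In case (2), $\int C^R\asymp t_1^{d_1}t_2^{2d_2-(R-1)\rho_2}L$. The integral $\int C^{R+1}$ is at most of order $t_1^{d_1}\int_{(t_2D_2)^2}C_2^{R}$ by \cref{rem:useful var C1 int}, and this is $O(t_1^{d_1}t_2^{d_2})$ or $t_1^{d_1}t_2^{2d_2-R\rho_2}L$. Either way it is smaller by a factor $t_2^{-\min(\rho_2,\,d_2-(R-1)\rho_2)+\epsilon}$.
\item In cases (3) and (4), $\int C^{R+1}$ is at most of order $t_1^{\max(d_1,\,2d_1-(R+1)\rho_1)+\epsilon}$ times a $t_2$-factor, and that $t_2$-factor is at most the corresponding $t_2$-factor for $R$. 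Since $R\rho_1<d_1$, the $t_1$-exponent drops strictly.
\end{itemize}

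The key point throughout is monotonicity: passing from $R$ to $R+1$ never increases either the $t_1$-exponent or the $t_2$-exponent, and it strictly lowers at least one of them.

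The main obstacle is the mixed regime in which the $(R+1)$-st exponent sits in a different case of \cref{prop:variance} than the $R$-th, or at a critical boundary where only logarithmic control is available. There, rather than invoking \cref{prop:variance} directly, I would bound $\int C^{R+1}\le \sup_{\text{relevant set}}C \cdot \int C^R$ on most of the domain. Concretely, I would split the domain into $\{\|x_1-y_1\|\ge K\}$, where $C\le c_1(K c_2(\cdot)^{1/d_1})$ is small, and its complement, whose contribution is of order $t_1^{d_1}\int C_2^{R+1}$. I would then show that each piece is $o(\int C^R)$ by first letting $t\to\infty$ and then $K\to\infty$. The bound on the complement again uses \Cref{ass:KEY} to guarantee that the effective spatial scale diverges.
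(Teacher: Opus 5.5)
Your proposal is correct and follows the paper's proof. Orthogonality of the chaoses together with $0\le C\le 1$ reduces everything to $\int C^{R+1}=o(\int C^R)$, which is then read off from the rates of \cref{prop:variance}; your exponent-by-exponent comparison, with Potter $\epsilon$-slack at critical boundaries, just makes explicit what the paper summarizes as ``looking carefully at the rates''. One small slip is in your fallback, which you do not actually need: on $\{\|x_1-y_1\|\ge K\}$ the bound $C\le c_1(Kc_2(\cdot)^{1/d_1})$ is not uniformly small, because $c_2(\|x_2-y_2\|)$ can be as small as $c_2(t_2\diam D_2)\to0$, so you must keep the factor $c_2$ and use $C\le\sup_{0<s\le1}s\,c_1(Ks^{1/d_1})$, which does tend to $0$ as $K\to\infty$.
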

\begin{proof}
    Since $C\ge0$ by Assumption \ref{ass:gneiting}, $C\le 1$ and $\|\varphi\|^2_{L^2(\gamma)}=\sum_{q=0}^\infty q!a_q^2<\infty$, we have \begin{align*}
        \frac{\Var(Y(t)-Y(t)[R])}{\Var(Y(t)[R])} & =\frac{\sum_{q\ge R+1}q!a_q^2 \int_{t_1D_1\times t_2D_2} C(x-y)^q \, dx \, dy}{R!a_R^2\int_{t_1D_1\times t_2D_2} C(x-y)^R \, dx \, dy} \\
        & \lesssim \, \frac{\int_{t_1D_1\times t_2D_2} C(x-y)^{R+1} \, dx\,dy}{\int_{t_1D_1\times t_2D_2} C(x-y)^{R} \, dx\,dy}\,.
    \end{align*}
    Since $\Var(Y(t)[R])\asymp\int_{t_1D_1\times t_2D_2} C(x-y)^{R} \, dx\,dy$, looking carefully at the rates of the variances in the statement of Proposition \ref{prop:variance}, we have that the last quotient always goes to $0$\footnote{Here the idea is that, even if $C\notin L^{R+1}$, the rate of $\int C^{R+1}$ is slower  if $C\notin L^R$. This is a standard behavior when $C^R\ge0$ and we are in the long memory regime, i.e. $C\notin L^R$.} in the cases \emph{(2)}-\emph{(3)}-\emph{(4)}. Thus, by standard techniques, see e.g. \cite[p.747]{MN22}, we conclude observing that, as $t\to\infty$, \begin{equation*}
        \frac{\Var(Y(t)-Y(t)[R])}{\Var(Y(t)[R])}\rightarrow 0 \quad\implies\quad \E[(\widetilde Y(t)-\widetilde Y(t)[R])^2]\rightarrow 0\,.
    \end{equation*}
\end{proof}

\subsubsection{Proofs of \cref{thm:ltBREUERMAJOR} and \cref{thm:ltBALTO}}
\label{subsec:central}

\begin{proof}[Proof of \cref{thm:ltBREUERMAJOR}]
    Thanks to \cref{eq: Y deco}, $\widetilde Y(t)$ admits the following chaotic decomposition: \begin{align*}
         Y(t) & = t_1^{d_1} t_2^{d_2} \vol(D_1\times D_2) \, \E[\varphi(B_0)] + \sum_{q=R}^\infty a_q \int_{t_1D_1\times t_2D_2} H_q(B_x) \, dx, 
    \end{align*}
    where $R$ is the Hermite rank of $\varphi$.
    Thanks to the orthogonality of any pair of chaotic components, and \cref{prop:variance} \emph{(1)}, we may write \begin{equation*}
        \Var(Y(t)) = 
        t_1^{d_1} t_2^{d_2} \, \vol(D_1)\vol(D_2) \sum_{q=R}^\infty a_q^2 \, q! \, \|C_1\|^q_{L^q(\R^{d_1})} \|C_2\|^q_{L^{q-1}(\R^{d_2})}.
    \end{equation*}
    Therefore, recalling \cref{eq: Y deco}, we have \begin{equation*}
        \widetilde Y(t) = \sum_{q=R}^\infty I_q\left(\frac{a_q f_{t_1D_2\times t_2D_2,q}}{\sqrt{\Var(Y(t))}}\right) =: \sum_{q=R}^\infty I_q (\widetilde f_{t_1,t_2,q}).
    \end{equation*} 
    By means of \cref{thm 631}, it's enough to check four conditions: \begin{enumerate}
        \item for every fixed $q\ge R$, $q!\|\widetilde f_{t_1,t_2,q}\|\to \sigma_q^2$ as $t_1,t_2\to\infty$; 
        \item $\sigma^2 = \sum_{q=R}^\infty \sigma_q^2 <\infty$;
        \item for all $q\ge R$, and $r=1,\ldots, q-1$, $\|\widetilde f_{t_1,t_2,q}\otimes_r \widetilde f_{t_1,t_2,q}\|\to 0$;
        \item $\lim_{N\to\infty} \sup_{t_1,t_2} \sum_{q=N+1}^\infty q! \|\widetilde f_{t_1,t_2,q}\|^2 \to 0$.
    \end{enumerate}
    Since \begin{equation*}
        \| f_{t_1,t_2,q}\|^2 = \int_{t_1D_1\times t_2D_2} C(x-y)^q \, dx \, dy, 
    \end{equation*}
    thanks to \cref{prop:variance}, we have  that the first condition is satisfied with $\sigma_q^2 = q! a_q^2$, and the second one, too, with $\sigma^2 = 1$. The fourth condition holds as well, since the series is convergent. Now, we prove that the third is satisfied. We may write, up to some constant $K$, \begin{align*}
        \|\widetilde f_{t_1,t_2,q} \otimes_r  \widetilde f_{t_1,t_2,q}\|^2 = K \medmath{\frac{\int_{(t_1D_1\times t_2D_2)^4} C(x-y)^{q-r}C(y-z)^rC(z-v)^{q-r}C(v-x)^r \, dx\,dy\,dz\,dv}{t_1^{2d_1}t_2^{2d_2}}}.
    \end{align*} 
    Using the classical inequality $A^{q-r}B^r\le K( A^q+B^q)$ for any $A,B\in\R^+$, and a positive constant $K$, changing variables $\hat x = x-y$, we get: \begin{align}
    \label{eq:integrable hence BMalpha}
        & \int_{(t_1D_1\times t_2D_2)^4} C(x-y)^{q-r}C(y-z)^rC(z-v)^{q-r}C(v-x)^r \, dx\,dy\,dz\,dv \\
        & \le 2 K  \int_{\R^d} C(\hat x)^q 1_{(t_1D_1\times t_2D_2)-y}(\hat x) \int_{(t_1D_1\times t_2D_2)^3} C(y-z)^{q-r}C(z-v)^r \, dy \, dz \, dv \, d\hat x \nonumber \\
        & \le 2K \underbrace{\int_{\R^d} C(\hat x)^q d\hat x}_{\text{bounded}} \int_{(t_1D_1\times t_2D_2)^3} C(y-z)^{q-r}C(z-v)^r \, dy \, dz \, dv,\label{eq:mid}
    \end{align}
    since $q\ge R$, $C\in L^R{\R^d}$ and $C\le 1$. Changing variables as $a=y-z$, $b=z-v$, and $c=v$, we get (up to some other constant $K$) \begin{align}
        & \eqref{eq:mid}  \le  K \int_{\R^{2d}} C(a)^{q-r}C(b)^r \times \nonumber\\
        & \qquad \times \int_{\R} 1_{t_1D_1\times t_2D_2} (a+b+c) 1_{t_1D_1\times t_2D_2} (b+c) 1_{t_1D_1\times t_2D_2} (c)  \, dc \, db \, da \nonumber\\
        & \le K \int_{\R^{2d}} C(a)^{q-r}C(b)^r \times \underbrace{\sqrt{\int_{\R^{d_1}} 1_{t_1D_1\times t_2D_2} (a+b+c) 1_{t_1D_1\times t_2D_2} (b+c)   dc}}_{=: f(a)} \nonumber\\
        & \hspace{4cm}\times  \sqrt{\int_{\R^{d_1}} 1_{t_1D_1\times t_2D_2} (b+c) 1_{t_1D_1\times t_2D_2} (c)  dc} \, da db ,
        \label{eq:integrable hence BMomega}
    \end{align}
    by applying H\"older inequality in the integral $\int_{\R^d} \ldots dc$. Remark that the function $f:y\in\R^{d}\mapsto  \sqrt{\vol(t_1D_1\times t_2D_2\cap (t_1D_1\times t_2D_2-y))}$ has compact support contained in the product \begin{equation*}
        B(t_1,t_2):=\mathcal B_{d_1}(t_1\diam(D_1))\times \mathcal B_{d_2}(t_2\diam(D_2)),
    \end{equation*}
    where $\mathcal B_{d}(s)$ denotes the unit ball in $\R^d$ of radius $s>0$. We can write, for all $y\in\R^d$, $f(y)\le 1_{B(t_1,t_2)}(y) \sqrt{\vol(t_1D_1\times t_2D_2)}$. 
    Therefore, \begin{align*}
        & \|\widetilde f_{t_1,t_2,q}\otimes_r \widetilde f_{t_1,t_2,q}\|^2  \le K \, t_1^{-2d_1} t_2^{-2d_2} \left( \int_{\R^d} C(a)^{q-r}f(a) \, da \right)\, \left( \int_{\R^d} C(b)^{r}f(b) \, db \right)\\
        & \le K t_1^{-d_1+d_1\frac{q-r}{q}}t_2^{-d_2+d_2\frac{q-r}{q}} \left( \int_{B(t_1,t_2)} C(a)^{q-r} \, da \right)\, t_1^{-d_1+d_1\frac{r}{q}}t_2^{-d_2+d_2\frac{r}{q}} \left( \int_{B(t_1,t_2)} C(a)^{r} \, da \right).
    \end{align*}
    To prove the desired convergence, it remains to show that for, any $r=1,\ldots,q-1$, \begin{equation}
    \label{eq:integrable hence 0}
        t_1^{d_1(-1+\frac{q-r}{q})}t_2^{d_2(-1+\frac{q-r}{q})} \left( \int_{B(t_1,t_2)} C(a)^{q-r} \, da \right) \longrightarrow 0.
    \end{equation}
    This can be proved as in the proof of \cite[Theorem 7.2.4]{bluebook}, point (c) Equation (7.2.7), since $C\in L^q(\R^d)$ for all $q\ge R$. 

    Suppose now that $C_2\notin L^{R-1}(\R^{d_2})$, but it is slowly or regularly varying (in the latter case, with index $-\rho_2<0$ such that $\rho_2\le d_2/(R-1)$). By \cref{prop:reduction}, we have that $\widetilde Y(t)$ and $\widetilde Y(t)[R]$ share the same limiting distribution.  Since $\widetilde Y(t)[R]\subset \mathcal H_R$, see \cref{sec:malliavinstein}, we show that, for any $r=1,\ldots,R-1$, we have
    \begin{equation}
    \label{c4}
     \frac{\int_{(t_1D_1 \times t_2D_2)^4} C(x-y)^{R-r}C(y-z)^r C(z-v)^{R-r}  C(v-x)^r \, dx \, dy \, dz \, dv}{\left(\int_{(t_1D_1 \times t_2D_2)^2} C(x-y)^R\,  dx \, dy\right)^2}\longrightarrow 0 ,
\end{equation}
and conclude by applying \cref{thm:4th}.
The rate of growth of the denominator as $t \to\infty$ has already been studied in \cref{prop:variance} (see also \cref{rem:useful var C1 int}):  \begin{equation}
\label{var rates}
    \Var(Y(t)[R]) = R! \int_{(t_1D_1 \times t_2D_2)^2} C(x-y)^R\,  dx \, dy  \asymp \ell^\prime  \, t_1^{d_1} \, \int_{(t_2D_2)^2} C_2(x_2-y_2)^{R-1} dx_2 dy_2,
\end{equation}
where $\ell^\prime = R! \vol(D_1) \int_{\R^{d_1}} C_1(z_1)^R dz_1$. Recall \Cref{ass:gneiting}. Since $c_1$ and $c_2$ are decreasing\footnote{$c_1$ is always decreasing, being completely monotone. Since $1/c_2$ has a completely monotone derivative, we know that $(1/c_2)'$ is positive, implying that $c_2$ is decreasing.}, then for all $x_2,y_2\in t_2D_2$, we have the uniform estimate \begin{equation*}
    C_1\left((x_1-y_1)C_2(x_2-y_2)^{1/d_1}\right) \le C_1\left((x_1-y_1)c_2(t_2M)^{1/d_1}\right),
\end{equation*}
where $M:=\diam(D_2)$. 
Then, \begin{align}
    & \int_{(t_1D_1\times  t_2D_2)^4} C^{R-r}(x-y) C^r (y-z) C^{R-r}(z-v) C^r(v-x) \, dx \, dy \, dz \, dv  \nonumber \\ 
    & \le 
    \int_{(t_2D_2)^4} C_2^{R-r}(x_2-y_2) \ldots C_2^r(v_2-x_2) \, dx_2 \, dy_2 \, dz_2 \, dv_2 \times \label{I2}\\
    & \quad \times \underbrace{\int_{(t_1D_1)^4} C_1^{R-r}\left((x_1-y_1)c_2(t_2M)^{1/d_1}\right) \ldots C_1^r\left((v_1-x_1)c_2(t_2M)^{1/d_1}\right) \, dx_1 \, dy_1 \, dz_1 \, dv_1}_{=: J_1(t_1,t_2)} \label{I1}.
\end{align}
Since $C_1\in L^R(\R^{d_1})$, analogously to what we have done between \cref{eq:integrable hence BMalpha,eq:integrable hence BMomega}, we may prove that 
\begin{align} 
    J_1(t_1,t_2)  \le K \, \int_{\R^{d_1}} C_1\left(x_1 c_2(t_2M)^{1/d_1}\right)^R dx_1 & \times \left( \int_{\R^{d_1}} C_1\left(y_1 c_2(t_2M)^{1/d_1}\right)^{R-r} f(y_1) dy_1 \right) \nonumber \\
    &  \times \left( \int_{\R^{d_1}} C_1\left(z_1 c_2(t_2M)^{1/d_1}\right)^{r} f(z_1) dz_1 \right), \label{eq:daje}
\end{align}
where the function $f:y_1\in\R^{d_1}\mapsto  \sqrt{\vol(t_1D_1\cap (t_1D_1-y_1))}$ has support contained in a ball of radius $\diam(t_1D_1)$, denoted $t_1\mathcal B$, hence compact. Moreover, for all $y_1\in\R^{d_1}$ we have $f(y_1)\le 1_{t_1\mathcal B}(y_1)\sqrt{\vol(t_1D_1)}$. Then, changing variables, we get (for some $K$ that may change at every line) \begin{align*}
   J_1(t_1,t_2) & \le K  \times c_2(t_2M)^{-3} \underbrace{\int_{\R^{d_1}} C_1(x_1)^R dx_1}_{<\infty, \, C_1\in L^R(\R^{d_1})} \bigg( \int_{\left(t_1c_2(t_2M)^{1/d_1}\right)\mathcal B} C_1(y_1)^r dy_1 \times \sqrt{\vol(t_1D_1)} \bigg) \\
   & \hspace{3cm}\times \bigg( \int_{\left(t_1c_2(t_2M)^{1/d_1}\right)\mathcal B} C_1(y_1)^{R-r} dy_1 \times \sqrt{\vol(t_1D_1)} \bigg) \\
    & \le K \times  c_2(t_2M)^{-3} \times t_1^{d_1} \prod_{j=r,R-2}\bigg( \int_{\left(t_1c_2(t_2M)^{1/d_1}\right)\mathcal B} C_1(y_1)^{j} \, dy_1 \bigg) .  
\end{align*}
Recalling \eqref{var rates}, we have found that, up to some constant $K$, as $t\to\infty$, \begin{align*}
   & \eqref{c4}  \le K \, \times \underbrace{\left( \prod_{j=R,R-r}\frac{ \int_{\left(t_1c_2(t_2M)^{1/d_1}\right)\mathcal B} C_1(y_1)^j dy_1}{(t_1c_2(t_2M)^{1/d_1})^{(d_1 \, j)/R}}\right)^2}_{=: I_1(t_1,t_2)^2} \times c_2(t_2M)^{-2} \\
   & \times {\underbrace{\frac{ \int_{(t_2D_2)^4} C_2(x_2-y_2)^{R-r} C_2(y_2-z_2)^{r} C_2(z_2-v_2)^{R-r} C_2(v_2-x_2)^r \, dx_2 \, dy_2 \, dz_2 \, dv_2}{ \left(\int_{(t_2D_2)^2} C_2(x_2-y_2)^{R-1} \, dx_2\, dy_2\right)^2}}_{=:I_2(t_2)} }.
\end{align*} 
Since $C_2$ is slowly or regularly varying such that $C_2\notin L^{R-1}(\R^{d_2})$, $c_2(t_2M)^{-2} I_2(t_2)$ is bounded. To conclude, we may prove that $I_1(t_1,t_2)\to 0$ as $t\to\infty$, as done in the proof of \cite[Theorem 7.2.4]{bluebook}, point (c) Equation (7.2.7), since $C_1\in L^R(\R^d)$, applying also \cref{eq:integrable hence 0} and recalling  \Cref{ass:KEY}. Thanks to \cref{thm:4th}, we conclude. 
\end{proof}

\begin{proof}[Proof of \cref{thm:ltBALTO}] Recall that $C_2\in L^{R(1-\rho_1/d_1)}(\R^{d_1})$ (with no further assumptions on its regular variation) and suppose $C_1\notin L^R(\R^{d_1})$ is regularly varying with parameter $-\rho_1<0 $ such that $ \rho_1 < d_2/R$. By \cref{prop:reduction}, $\widetilde Y(t)$ and $\widetilde Y(t)[R]$ share the same limiting distribution. Since $\widetilde Y(t)[R]\subset \mathcal H_R$, see \cref{sec:malliavinstein}, we show that, for any $r=1,\ldots,R-1$, we have
    \begin{equation}
    \label{c43}
     \frac{\int_{(t_1D_1 \times t_2D_2)^4} C(x-y)^{R-r}C(y-z)^r C(z-v)^{R-r}  C(v-x)^r \, dx \, dy \, dz \, dv}{\left(\int_{(t_1D_1 \times t_2D_2)^2} C(x-y)^R\,  dx \, dy\right)^2}\longrightarrow 0 ,
\end{equation}
and conclude by applying \cref{thm:4th}. 
Changing variables $a_i\mapsto t_ia_i$, for $i=1,2$ and $a\in \{x,y,z,v\}$, we may write the numerator of \eqref{c43} as \begin{align*}
    &  t_2^{4d_2} \int_{D_2^4} C_2(t_2(x_2-y_2))^{R-r} \ldots C_2(t_2(v_2-x_2))^r \, dx_2 \, dy_2 \, dz_2 \, dv_2 \times \\
    & \, \times t_1^{4d_1} \int_{D_1^4} C_1\left(t_1(x_1-y_1)C_2(t_2(x_2-y_2))^{1/d_1}\right)^{R-r} \ldots \\
    & \hspace{5cm} \ldots C_1\left(t_1(v_1-x_1)C_2(t_2(v_2-x_2))^{1/d_1}\right)^r dx_1 \ldots dv_1.
\end{align*}
Thanks to \cref{prop:variance}, and regular variation of $C_1$, we have that the denominator of \eqref{c43} is, up to some positive constant $K$, that may vary from line to line, \begin{equation*}
    \left( \int_{(t_1D_1\times t_2D_2)^2} C(x-y)^R \, dx \, dy \right)^2 \sim K t_1^{4d_1 - 2R\rho_1}  t_2^{2d_2} \sim K t_1^{4d_1} c_1(t_1)^{2R} \, t_2^{2d_2}.
\end{equation*}
Therefore, as $t\to\infty$, we have \begin{align*}
    \eqref{c43} & \le K \, t_2^{2d_2} \int_{D_2^4} C_2(t_2(x_2-y_2))^{R-r} \ldots C_2(t_2(v_2-x_2))^r \, dx_2 \, dy_2 \, dz_2 \, dv_2 \times \\
    & \qquad \times \frac1{c_1(t_1)^{2R}} \int_{D_1^4} C_1\left(t_1(x_1-y_1)C_2(t_2(x_2-y_2))^{1/d_1}\right)^{R-r} \ldots  dx_1 \ldots dv_1.
\end{align*}
We define the subsets $\widehat{\mathcal D}_{t_2}$ and $\widehat {\mathcal F}_{t_1,t_2}$ in such a way that we may apply \cref{prop:potter} to $C_1$ (we leave the details, but the definitions are analogous to the ones given in the proof of \cref{prop:variance}). Therefore, up to some constant $K$, for $t_1$ and $t_2$ large enough, and $\delta>0$, we have that \begin{align}
&  \eqref{c43} 
      \le  K \times  t_2^{2d_2} \times \nonumber\\
     &  \times \int_{\widehat D_{t_2}} C_2(t_2(x_2-y_2))^{(R-r)(1-(\rho_1+\delta)/d_1)} \ldots C_2(t_2(v_2-x_2))^{r(1-(\rho_1+\delta)/d_1)} \, \medmath{dx_2 \, dy_2 \, dz_2 \, dv_2 } \nonumber\\
    &  \times \int_{\widehat F_{t_1,t_2}} \|x_1-y_1 \|^{-{(R-r)}(\rho_1+\delta)} \ldots \|v_1-x_1 \|^{-r(\rho_1+\delta)} \  \medmath{dx_1 \, dy_1 \, dz_1 \, dv_1} \nonumber\\
    & + \text{reminder}.\label{I2 di nuovo}
\end{align}
Since we may choose $\delta$ small enough so that $C_2^{1+(-\rho_1+\delta)/d_1}\in L^R(\R^{d_1})$, by an application of the same arguments from \cref{eq:integrable hence BMalpha,eq:integrable hence BMomega}, we find that $\eqref{I2 di nuovo}\to 0$ as $t\to \infty$. To conclude, we remark that the second line is bounded since $\rho_1<d_1/R$. Finally, the reminder is negligible, since the volumes of $(D_1)^4\setminus \widehat{\mathcal F}_{t_1,t_2}$ and $(D_2)^4\setminus \widehat{\mathcal D}_{t_2}$ vanish like volumes of balls in $\R^{d_1}$ and $\R^{d_2}$ of radius $\frac1{t_1c_2(t_2)^{1/d_1}}$ and $\frac1{t_2}$, respectively.
\end{proof}

\subsubsection{Proof of \cref{thm:ltROSENBLATT}}

\label{subsec:proofNCLT}

\begin{proof}[Proof of \cref{thm:ltROSENBLATT}]
By \cref{prop:reduction}, $\widetilde Y(t)$ and $\widetilde Y(t)[2]$ share the same limiting distribution. We show that we have convergence of the cumulants of $\widetilde Y(t)[2]$ to the ones of a $2$-domain Rosenblatt distribution, explicitly computed in \cref{prop:rosencumulants}. This is enough to conclude since the Rosenblatt random variables and $\widetilde Y(t)[2]$ belongs to the second chaos, hence their distributions are moment-determined, see \cref{fredholm does the job}. Since moments and cumulants determine each other uniquely, we can equivalently study the cumulants of $\widetilde Y(t)$, denoted by $\kappa_k(t)$, for $k\ge 1$. Since $\widetilde Y(t)\in \mathcal H_2$, we have that $\kappa_1(t) = 0$, and standardization implies $\kappa_2(t)=1$ for any $t>0$. For higher $k\ge 2$, we have that \begin{equation*}
    \kappa_k(t) = 2^{k-1}(k-1)! \, c_k(t),
\end{equation*}
where \begin{equation*}
    c_k(t) := \frac{\int_{(t_1D_1 \times t_2D_2)^k} C(x^{(1)}-x^{(2)})C(x^{(2)}-x^{(3)}) \cdots C(x^{(k)}-x^{(1)}) dx^{(1)}  \ldots dx^{(k)}}{\left(2\int_{(t_1D_1 \times t_2D_2)^2} C^2(x^{(1)}-x^{(2)}) dx^{(1)}dx^{(2)}\right)^{k/2}},
\end{equation*}
and $C$ is the covariance function of the underlying Gaussian field $B$. The numerator of $c_k(t)$ may be written in terms of a cyclic product as in \cref{cyclic difficile}: \begin{equation*}
    \int_{(t_1D_1 \times t_2D_2)^k} C(x^{(1)}-x^{(2)})  \cdots C(x^{(k)}-x^{(1)}) dx^{(1)}  \ldots dx^{(k)} = \|C^{\circ k}\,\|_{L^1((t_1D_1\times t_2D_2)^k)}.
\end{equation*}
Thank to \cref{prop:variance}\emph{(4)}, the denominator is asymptotically equivalent, as $t\to\infty$, to \begin{equation*}
\medmath{    \left(2\int_{(t_1D_1 \times t_2D_2)^2} C(x^{(1)}-x^{(2)})^2 dx^{(1)}dx^{(2)}\right)^{k/2} \sim (2\sigma^2)^{k/2} \left[t_1^{d_1}\,c_1\left(t_1c_2(t_2)^{1/d_1}\right)\right]^{k}\,\left[t_2^{d_2}\,c_2(t_2)\right]^{k} },
\end{equation*}
where $\sigma^2 = \int_{(D_1^2)} \|x_1-y_1\|^{-\rho_1} dx_1\, dy_1 \times \int_{(D_2^2)} \|x_2-y_2\|^{-\rho_2(1-\rho_1/d_1)} dx_2\, dy_2 $.
Thanks to \cref{lem:cactus gneiting nonclt}, we conclude that \begin{equation*}
    \lim_{t\to\infty} c_k(t) = c_k^{\rho_1,\rho_2(1-\rho_1/d_1)},
\end{equation*}
as defined in \cref{eq:ckRosen}. We conclude thanks to \cref{prop:rosenblatt characteristic}.
\end{proof}

\appendix
    \section{Some results on regularly varying functions over growing domains}
\label{sec:auxiliaryRV}

This Appendix is devoted to the study of the limit of (a proper renormalization of) the following: \begin{align}
    & \int_{(t\Delta)^k} c(\|x^{(1)}-x^{(2)}\|)\cdot \ldots \cdot c(\|x^{(k)}-x^{(1)}\|) \, dx^{(1)}\cdot \ldots \cdot dx^{(k)}, \qquad t\to\infty\label{cyclic facile}\\
    & \int_{(t_1D_1\times t_2D_2)^k} c(\|x^{(1)}-x^{(2)}\|)\cdot \ldots \cdot c(\|x^{(k)}-x^{(1)}\|) \, dx^{(1)}\cdot \ldots \cdot dx^{(k)},\qquad t_1,t_2\to\infty.\label{cyclic difficile}
\end{align}
where $c$ is a regularly varying function (see \cref{lem:cactus}), or a nested regularly varying function (see \cref{lem:cactus gneiting nonclt}). These results will play a key role in determining the asymptotic distribution of the normalization of \cref{eq:Yt}, under \Cref{ass:gneiting}, in the regularly varying long-range dependent case.

\medskip

Let us recall the definition of cyclic product, given in \cref{def:cyclic}. 
\begin{remark} We may extend the previous definition to the product of $k$ positive functions as follows.
    If $f_1,\dots,f_k\in L^1(\R^d)$ and $f_i\ge0$, then the cyclic product is defined as \begin{equation*}
        f_1\circ\cdots\circ f_k(x^{(1)},\dots,x^{(k)}):=\prod_{i=1}^k f_i\left(x^{(i)}-x^{(i+1)}\right)
    \end{equation*}
and  for $f_i$ symmetric ($f_i(x)=f_i(-x)$) we have by Young's convolution inequalities
\begin{equation}
\|f_1\circ\cdots\circ f_k\|_{L^1(\R^{dk})}
=
(f_1*\cdots*f_k)(0)
\le\|f_1\|_{L^2(\R^d)}\|f_2\|_{L^2(\R^d)}
\prod_{i=3}^k \|f_i\|_{L^1(\R^d)}\,\,.
\label{eq:young}
\end{equation}
In particular, if $\Delta\subset\R^d$ is compact we have (since $x^{(i)}-x^{(i+1)}\in\Delta-\Delta$ and by Cauchy-Schwarz)
\begin{multline}
    \label{eq:locallyint cyclic}
    \|f_1\circ\cdots\circ f_k\|_{L^1(\Delta^k)}\le \|f_1\|_{L^2(\Delta-\Delta)}\|f_2\|_{L^2(\Delta-\Delta)}
\prod_{i=3}^k \|f_i\|_{L^1(\Delta-\Delta)} \\
\le {\rm Vol}(\Delta-\Delta)\prod_{i=1}^k \|f_i\|_{L^2(\Delta-\Delta)}\,.
\end{multline}
\end{remark}

Before moving to the proofs, note that \eqref{cyclic facile} and \eqref{cyclic difficile} can be rewritten as follows \begin{equation*}
    \eqref{cyclic facile}=\|C^{\circ k}\|_{L^1((t\Delta)^k)}\quad\quad\quad \eqref{cyclic difficile}=\|C^{\circ k}\,\|_{L^1((t_1D_1\times t_2D_2)^k)}\,\,.
\end{equation*}
We start studying the asymptotic behavior of \eqref{cyclic facile}.
\begin{proposition}\label{lem:cactus}
Let $d\ge1$ and let $\Delta\subset\R^d$ be compact.
Assume $C(x)=c(\|x\|)$ where $c$ is regularly varying at infinity with index $-\rho$, with
$0<\rho<\frac d2$. Set $f_\rho(x):=\|x\|^{-\rho}$.
Then for $k\ge2$,
\begin{equation}
\label{eq:claim facile}
    \lim_{t\to\infty}
\frac{\|C^{\circ k}\|_{L^1((t\Delta)^k)}}{(t^d c(t))^k}
=\|f_\rho^{\circ k}\|_{L^1(\Delta^k)}\,\,
\end{equation}
where the limit is finite because $f^2_\rho$ is locally integrable, see \eqref{eq:locallyint cyclic}.
\end{proposition}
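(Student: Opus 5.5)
Change variables $x^{(i)}=t u^{(i)}$, $u^{(i)}\in\Delta$. This gives
\begin{equation*}
\frac{\|C^{\circ k}\|_{L^1((t\Delta)^k)}}{(t^d c(t))^k}=\int_{\Delta^k}\prod_{i=1}^k \frac{c(t\|u^{(i)}-u^{(i+1)}\|)}{c(t)}\,du^{(1)}\cdots du^{(k)} .
\end{equation*}
By regular variation, each factor converges pointwise to $\|u^{(i)}-u^{(i+1)}\|^{-\rho}$ whenever $u^{(i)}\neq u^{(i+1)}$, i.e.\ almost everywhere on $\Delta^k$. So the integrand converges a.e.\ to $f_\rho^{\circ k}$, and the claim reduces to justifying the exchange of limit and integral. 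I will do this with a domination, in the spirit of the partition used in the proof of \cref{prop:variance} (4).

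Fix $\delta>0$ with $2(\rho+\delta)<d$, and let $A,X$ be the Potter constants of \cref{prop:potter} for $c$. Set $M:=\diam(\Delta)$ and define
\begin{equation*}
g_t(r):=\frac{c(tr)}{c(t)}, \qquad r\in(0,M].
\end{equation*}
I will show that $g_t(r)\le h_t(r)$, where $h_t$ is built from two regimes.
\begin{itemize}
\item If $tr\ge X$, Potter bounds give $g_t(r)\le A\max\{r^{-\rho-\delta},r^{-\rho+\delta}\}\le A' r^{-\rho-\delta}$, since $r\le M$.
\item If $tr<X$, use $c\le 1$ (unit variance), so $g_t(r)\le 1/c(t)$. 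On this set $r<X/t$, and \eqref{eq:regvar 0 e infty} gives $1/c(t)\lesssim t^{\rho+\delta}\le X^{\rho+\delta} r^{-\rho-\delta}$.
\end{itemize}
Hence $g_t(r)\le K r^{-\rho-\delta}$ uniformly in $t\ge X$ and $r\in(0,M]$. Consequently the integrand is dominated by $K^k f_{\rho+\delta}^{\circ k}$, independently of $t$.

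This dominating function is integrable on $\Delta^k$ by \eqref{eq:locallyint cyclic}: each factor satisfies $f_{\rho+\delta}\in L^2(\Delta-\Delta)$ because $2(\rho+\delta)<d$. Dominated convergence then yields \eqref{eq:claim facile}. The same bound applied to the limit shows $\|f_\rho^{\circ k}\|_{L^1(\Delta^k)}<\infty$.

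The main obstacle is the small-distance regime $tr<X$, where Potter bounds do not apply. It is handled by the crude bound $c\le1$ together with the lower growth bound $c(t)\gtrsim t^{-\rho-\delta}$ from \cref{prop:potter}, which converts $1/c(t)$ into the integrable singularity $r^{-\rho-\delta}$. One point needs care: the domination must hold uniformly in $t$, and the bound $1/c(t)\lesssim t^{\rho+\delta}$ uses the eventual (large-$t$) form of \eqref{eq:regvar 0 e infty}, so only large $t$ are considered. For $k=2$ no cyclic structure is involved, but the argument is identical.
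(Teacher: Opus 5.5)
Your proof is correct, but it handles the near-diagonal region differently from the paper.

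The paper splits $\Delta-\Delta$ into $\mathcal D_t=\{\|x\|\ge X/t\}$ and its complement. It applies dominated convergence only to the kernel $C(t\cdot)/c(t)$ truncated to $\mathcal D_t$, where Potter bounds give the majorant $\max\{f_{\rho+\delta},f_{\rho-\delta}\}$. It then shows separately that the remainder $r_t$ vanishes. To do so it expands the cyclic product multilinearly, invoking positivity of $C$, and bounds $r_t$ via \eqref{eq:locallyint cyclic} by the $L^1(\Delta-\Delta)$ norm of the kernel on $\mathcal D_t^c$ times the $(k-1)$-st power of its $L^2(\Delta-\Delta)$ norm. The first factor tends to $0$ and the second stays bounded.

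You instead notice that on $\{tr<X\}$ the crude bound $1/c(t)\le t^{\rho+\delta}<X^{\rho+\delta}r^{-\rho-\delta}$ reproduces exactly the same power singularity. So a single $t$-uniform majorant $K f_{\rho+\delta}$ holds on all of $\Delta-\Delta$, and one application of dominated convergence finishes the proof. This gains three things:
\begin{itemize}
\item it avoids both the remainder estimate and the multilinear expansion;
\item it needs only $|c|\le 1$ and eventual positivity of $c$;
\item it gives finiteness of the limit at no extra cost.
\end{itemize}

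What the paper's truncation-plus-remainder structure buys is that it transfers directly to the nested Gneiting kernel in \cref{lem:cactus gneiting nonclt}. There the small-distance regions of the two blocks interact through the factor $c_2(t_2x_2)^{1/d_1}$. They are controlled by the $L^1$ bounds \eqref{first rem normed}--\eqref{third rem normed} rather than by one power-law majorant.
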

\begin{proof}
By the standard change of variable $x^{(i)}=t\,y^{(i)}$, we have \begin{equation*}
    \frac{\|C^{\circ k}\|_{L^1((t\Delta)^k)}}{(t^d c(t))^k}
=
\left\|
\left(\frac{C(t\cdot)}{c(t)}\right)^{\circ k}
\right\|_{L^1(\Delta^k)}.
\end{equation*}
By assumption, for every $x\neq 0$, $C(tx)/c(t)\rightarrow f_\rho(x)$.
Fix $X>0$ and define \begin{equation*}
    \mathcal{D}_{t}=\{x\in \Delta-\Delta:\,\|x\|\ge X/t\}.
\end{equation*}
By \cref{prop:potter}, $X$ can be chosen so that $C(t\cdot)/c(t)\mathbbm{1}_{\mathcal{D}_t}\lesssim
\max\{f_{\rho+\delta},f_{\rho-\delta}\}$,
that (since $\rho<d/2$) for some $\delta>0$ is locally in $L^2$.
Hence, by \eqref{eq:locallyint cyclic} and dominated convergence we get
\begin{equation}
\label{eq:dominating term facile}
\lim_{t\to\infty}
\left\|
\left(\frac{C(t\cdot)}{c(t)}\mathbbm{1}_{\mathcal{D}_t}\right)^{\circ k}
\right\|_{L^1(\Delta^k)}
=
\|f_\rho^{\circ k}\|_{L^1(\Delta^k)}.
\end{equation}
It remains to control the contribute on $\mathcal{D}_t^c$. We can decompose \begin{equation*}
    \left\|
\left(\frac{C(t\cdot)}{c(t)}\right)^{\circ k}
\right\|_{L^1(\Delta^k)}
=
\left\|
\left(\frac{C(t\cdot)}{c(t)}\mathbbm 1_{\mathcal D_t}\right)^{\circ k}
\right\|_{L^1(\Delta^k)}
+
r_t\quad \  \,.
\end{equation*}
By multi-linearity of the cyclic product, positivity of $C$ and applying \eqref{eq:locallyint cyclic}, we have \begin{equation*}
    r_t
\le
k\,
\left\|
\left(\frac{C(t\cdot)}{c(t)}\mathbbm 1_{\mathcal D_t^c}\right)
\circ
\left(\frac{C(t\cdot)}{c(t)}\right)^{\circ(k-1)}
\right\|_{L^1(\Delta^k)}\lesssim\,\left\|
\frac{C(t\cdot)}{c(t)}\mathbbm 1_{\mathcal D_t^c}\right\|_{L^1(\Delta-\Delta)}
\,
\left\|
\frac{C(t\cdot)}{c(t)}
\right\|^{k-1}_{L^2(\Delta-\Delta)}\,.
\end{equation*}
Since $|C|\le 1$ and \eqref{eq:regvar 0 e infty} holds for $c^2$ regularly varying with parameter $2\rho<d$, we have \begin{equation*}
    \left\|
\frac{C(t\cdot)}{c(t)}\mathbbm 1_{\mathcal D_t^c}
\right\|^2_{L^1(\Delta-\Delta)}\le \int_{\Delta-\Delta}\frac{\mathbbm{1}_{\|x\|\le X/t}}{c^2(t)}\,dx
\lesssim
\frac{1}{t^d c^2(t)}\rightarrow 0\,\,.
\end{equation*}
Moreover, using again \cref{prop:potter} on $\mathcal{D}_t$ with $\delta>0$ sufficiently small, we get \begin{equation*}
    \left\|
\frac{C(t\cdot)}{c(t)}
\right\|_{L^2(\Delta-\Delta)}\le\left\|
\frac{C(t\cdot)}{c(t)}\mathbbm 1_{\mathcal D_t}
\right\|_{L^2(\Delta-\Delta)}+\left\|
\frac{C(t\cdot)}{c(t)}\mathbbm 1_{\mathcal D_t^c}
\right\|_{L^2(\Delta-\Delta)}\lesssim \left\|
f_{\rho+\delta}\mathbbm 1_{\mathcal D_t}
\right\|_{L^2(\Delta-\Delta)}<\infty\,.
\end{equation*}
Hence, $r_t\rightarrow0$ and the proof is concluded. 
\end{proof}

\begin{proposition}
\label{lem:cactus gneiting nonclt}
    Fix $d_i\ge 1$ integer and $D_i\subseteq \R^{d_i}$ compact, $i=1,2$. Suppose that \Cref{ass:gneiting} and   \Cref{ass:KEY} hold, and the setting of \cref{thm:ltROSENBLATT} prevails, that is, 
    $$\rho_1<d_1/2\,,\quad\quad \quad\rho_2^{*}:=\left(1-\frac {\rho_1}{d_1}\right)\rho_2<\frac{d_2}{2}\,\,.$$
    Finally, set $f_{\rho_i}(x_i)=\|x_i\|^{-\rho_i}$.
    Then, for $k\ge 2$ we have as $t_1,t_2\rightarrow\infty$ (recall \Cref{ass:KEY})
    \begin{equation*}
\frac{\|C^{\circ k}\,\|_{L^1((t_1D_1\times t_2D_2)^k)}}{\left[t_1^{d_1}\,c_1\left(t_1c_2(t_2)^{1/d_1}\right)\right]^{k}\,\left[t_2^{d_2}\,c_2(t_2)\right]^{k}}  \longrightarrow\left\|f_{\rho_1}^{\circ k}\,\right\|_{L^1(D_1^k)}\left\|f_{\rho_2^*}^{\circ k}\,\right\|_{L^1(D_2^k)}\,.
    \end{equation*}
\end{proposition}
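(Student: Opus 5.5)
We rescale $x^{(i)}=(t_1y_1^{(i)},t_2y_2^{(i)})$ with $y_j^{(i)}\in D_j$. The normalized quantity then becomes
\begin{equation*}
\int_{D_2^k}\prod_{i=1}^k \frac{c_2(t_2\|y_2^{(i)}-y_2^{(i+1)}\|)}{c_2(t_2)}\,
\int_{D_1^k}\prod_{i=1}^k \frac{c_1\big(t_1\|y_1^{(i)}-y_1^{(i+1)}\|\,c_2(t_2\|y_2^{(i)}-y_2^{(i+1)}\|)^{1/d_1}\big)}{c_1\big(t_1c_2(t_2)^{1/d_1}\big)}\,dy_1\,dy_2 .
\end{equation*}
Each $c_1$-ratio factors as in \eqref{eq:var2}:
\begin{equation*}
\frac{c_1(\lambda_i u_i)}{c_1(\lambda_i)}\cdot\frac{c_1(\lambda_i)}{c_1(\lambda)},
\qquad
\lambda_i=t_1c_2(t_2 s_i)^{1/d_1},\quad \lambda=t_1c_2(t_2)^{1/d_1},
\end{equation*}
with $u_i=\|y_1^{(i)}-y_1^{(i+1)}\|$ and $s_i=\|y_2^{(i)}-y_2^{(i+1)}\|$. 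By \Cref{ass:KEY}, $\lambda\to\infty$. Since $c_2(t_2s_i)/c_2(t_2)\to s_i^{-\rho_2}$, \cref{prop:UCTregular} gives pointwise convergence of the integrand (off the diagonals) to
\begin{equation*}
\prod_i s_i^{-\rho_2}\,u_i^{-\rho_1}\,s_i^{\rho_1\rho_2/d_1}=f_{\rho_1}^{\circ k}(y_1)\,f_{\rho_2^*}^{\circ k}(y_2).
\end{equation*}
This is exactly the claimed product limit, since the $D_1$ and $D_2$ integrals separate.

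To pass to the limit we follow the scheme of the proof of \cref{lem:cactus}, in its product version. We define the good sets
\begin{equation*}
\mathcal D_{t_2}=\{z\in D_2-D_2:\|z\|\ge X^{(2)}/t_2\},\qquad
\mathcal F_{t}=\{z\in D_1-D_1:\|z\|\ge X^{(1)}/(M^{1/d_1}\lambda)\},
\end{equation*}
with $M$ as in the proof of \cref{prop:variance}(4), so that all arguments of $c_1,c_2$ on the good sets exceed the Potter thresholds. We write each one-step kernel $K_t(z_1,z_2)$ as $K_t\mathbbm 1_{\mathcal F_t\times\mathcal D_{t_2}}+K_t\mathbbm 1_{\text{bad}}$ and expand the cyclic product multilinearly. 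On the good part, \cref{prop:potter} applied to the three ratios, exactly as in \eqref{eq:since}, gives the domination
\begin{equation*}
K_t\mathbbm 1_{\text{good}}\lesssim \max_\pm \|z_2\|^{-(\rho_2\pm\delta)(1-(\rho_1\pm\delta)/d_1)}\,\|z_1\|^{-(\rho_1\pm\delta)}.
\end{equation*}
This bound is a tensor product $g_2\otimes g_1$ with $g_j\in L^2(D_j-D_j)$ for small $\delta$, because $2\rho_1<d_1$ and $2\rho_2^*<d_2$. Hence the cyclic product of the dominating kernel factorizes as $g_1^{\circ k}\otimes g_2^{\circ k}$, and \eqref{eq:locallyint cyclic} applied separately in $D_1$ and $D_2$ makes it integrable. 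Dominated convergence then yields the limit on the all-good term.

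The remainder terms have at least one bad factor. By multilinearity and positivity, each is bounded by $k$ times a cyclic product with one bad kernel and $k-1$ full kernels. Applying \eqref{eq:locallyint cyclic} on the product domain $(D_1\times D_2)^k$, this is at most
\begin{equation*}
\|K_t\mathbbm 1_{\text{bad}}\|_{L^1}\,\|K_t\|_{L^2}^{k-1}.
\end{equation*}
The $L^2$ norm of the full kernel is bounded: the good part is dominated by $g_1\otimes g_2$, and the bad part is controlled like the terms $I_2,I_3,I_4,I_5$ with $R=2$ in the proof of \cref{prop:variance}(4). It then suffices to show $\|K_t\mathbbm 1_{\text{bad}}\|_{L^1}\to0$, and we split the bad set into the same two pieces as there. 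On $\mathcal F_t^c\times\mathcal D_{t_2}$ we bound $c_1\le1$ and use Potter for $c_2$, which gives a bound of order $\lambda^{-d_1}/c_1(\lambda)\to0$, since $\lambda^{d_1}c_1(\lambda)\to\infty$. On $(D_1-D_1)\times\mathcal D_{t_2}^c$ we use \eqref{eq:nice lower} to keep the inner $c_1$ argument large (the $I_4$ argument), and for the leftover region we use $c_i\le1$ against the vanishing volumes, as for $I_5$. These are $L^1$ estimates of a single kernel, so they are easier than the $L^2$ ones already done.

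The main obstacle is the rigorous domination. We must make sure a single $\delta$ and a Potter constant work uniformly, including for the ratio $c_1(\lambda_i)/c_1(\lambda)$. Its argument ratio is $(c_2(t_2s_i)/c_2(t_2))^{1/d_1}$, which is bounded below by \eqref{eq:nice lower} but can be large as $s_i\to0$. This is why Potter must be applied with the exponent that yields $\|z_2\|^{-\rho_2(1-\rho_1/d_1)\pm O(\delta)}$, and why the strict inequality $2\rho_2^*<d_2$ is exactly what is needed for $L^2$ integrability. We would handle it by first bounding this ratio by $A\max_\pm(\cdot)^{-\rho_1\pm\delta}$, and then inserting the Potter bound for $c_2$.
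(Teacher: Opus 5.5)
Your proposal is correct and is essentially the paper's own proof. It uses the same rescaled kernel and good sets, the same Potter domination on $\mathcal F_t\times\mathcal D_{t_2}$ by a tensor product of locally $L^2$ powers with exponents $\rho_1$ and $\rho_2^*$, and dominated convergence on the all-good term. The remainder is handled the same way too: the bound $k\,\|K\mathbbm 1_{\mathrm{bad}}\|_{L^1}\|K\|_{L^2}^{k-1}$, with the bad $L^1$ norm sent to zero region by region as in \eqref{first rem normed}--\eqref{third rem normed}.

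The one step you handle differently needs care: the bound $\sup_t\|K_{t_1,t_2}\|_{L^2(D-D)}<\infty$.

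\emph{What the paper does.} It reads this bound off the case $k=2$, i.e.\ \cref{prop:variance}(4). It also uses that case for $k=2$ itself. There your $L^1\cdot(L^2)^{k-1}$ bound is unavailable, because \eqref{eq:locallyint cyclic} needs two $L^2$ slots besides the $L^1$ slot for the bad factor, so it requires $k\ge3$.

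\emph{Why your route can fail.} Suppose you redo the bound by the $I_2$-type shortcut $c_1\le 1$ on $\mathcal F_t^c\times\mathcal D_{t_2}$. You then only get $\lambda^{-d_1}c_1(\lambda)^{-2}\int_{\mathcal D_{t_2}}\|z_2\|^{-2\rho_2\mp2\delta}\,dz_2$. This shortcut is harmless in $L^1$, where only $\rho_2<d_2$ is needed. In $L^2$ it is unbounded in general when $2\rho_2\ge d_2$. The hypotheses allow this: only $2\rho_2^*<d_2$ is assumed, and \Cref{ass:KEY} only gives $\lambda\to\infty$, possibly very slowly compared with $t_2$.

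\emph{The fix.} Use the mechanism you already use on the good part: integrate $z_1$ out at its true scale $\lambda r^{1/d_1}$, where $r:=c_2(t_2\|z_2\|)/c_2(t_2)$. Karamata's theorem (using $2\rho_1<d_1$) and Potter bounds give, uniformly on $\mathcal D_{t_2}$,
\[
\int_{D_1-D_1}K_{t_1,t_2}(z_1,z_2)^2\,dz_1\;\lesssim\; r^2\,\frac{c_1(\lambda r^{1/d_1})^2}{c_1(\lambda)^2}\;\lesssim\;\|z_2\|^{-2\rho_2^*-O(\delta)} .
\]
This is integrable on $D_2-D_2$ for small $\delta$. The $\mathcal D_{t_2}^c$ part of the $L^2$ norm is fine as in the $I_4,I_5$ estimates.
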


\begin{proof}
By \Cref{ass:gneiting}, standard changes of variables $x_i=t_i x_i'$ we can write \begin{equation*}
    \frac{\|C^{\circ k}\,\|_{L^1((t_1D_1\times t_2D_2)^k)}}{\left[t_1^{d_1}\,c_1\left(t_1c_2(t_2)^{1/d_1}\right)\right]^{k}\,\left[t_2^{d_2}\,c_2(t_2)\right]^{k}}=\left\|K^{\circ k}_{t_1,t_2}\right\|_{L^1(D^k)}\,.
\end{equation*}
where $D:=D_1\times D_2$ and (by \Cref{ass:RV} and \cref{prop:UCTregular}) for $x_1,x_2\neq 0$ we have \begin{equation*}
    K_{t_1,t_2}(x_1,x_2):=
\frac{c_1\big(t_1\,x_1\, c_2(t_2 x_2)^{1/d_1}\big)\,c_2(t_2 x_2)}
{c_1\big(t_1c_2(t_2)^{1/d_1}\big)\,c_2(t_2)}\longrightarrow f_{\rho_1}(x_1)f_{\rho_2^*}(x_2)\,.\,\,.
\end{equation*}
First of all, note that when $k=2$ we have $$\|C^{\circ 2}\,\|_{L^1((t_1D_1\times t_2D_2)^2)}=\int_{(t_1D_1\times t_2D_2)^2}C^2(x-y)\,dxdy=\frac{1}{2}{\rm Var}\left(\int_{(t_1D_1\times t_2D_2)} H_2(B_x)\,dx\right)$$ and the result is a consequence of \cref{prop:variance}, case (4). Moreover, by covariograms' based techniques (see e.g. \cite[p.750]{MN22}), since $C^2\ge0$ we have 
\begin{equation}
    \label{eq: finite L2 norm}
    \sup_{t_1,t_2} \left\|K_{t_1,t_2} \right\|^2_{L^2(D-D)} \asymp \sup_{t_1,t_2} \frac{\int_{(t_1D_1\times t_2D_2)^2}C^2(x-y)\,dxdy}{\left[t_1^{d_1}\,c_1\left(t_1c_2(t_2)^{1/d_1}\right)\right]^{2}\,\left[t_2^{d_2}\,c_2(t_2)\right]^{2}}<\infty\,,
\end{equation}
where the $\sup$ is finite because the sequence is convergent.

\noindent
Now, we prove the result in general. From now on, assume $k\ge3$. Fix $X^{(1)},X^{(2)}>0$ and define \begin{equation*}
    \mathcal F_{t_1,t_2}
:=
\Big\{x_1\in D_1-D_1:\ 
\|x_1\|\ge \frac{X^{(1)}}{t_1c_2(t_2)^{1/d_1}}\Big\},
\quad
\mathcal \mathcal D_{t_2}
:=
\Big\{x_2\in D_2-D_2:\ 
\|x_2\|\ge \frac{X^{(2)}}{t_2}\Big\}.
\end{equation*}
Choosing $X^{(1)},X^{(2)}>0$ according to Proposition \ref{prop:potter} and reasoning exactly as in \eqref{eq:since}, one can show that $K_{t_1,t_2}\mathbbm{1}_{(\mathcal F_{t_1,t_2}\times\mathcal D_{t_2})}$ is bounded by a locally $L^2$ dominating function. Thus, by \eqref{eq:locallyint cyclic} and dominated convergence we get \begin{equation*}
    \left\|\left(K_{t_1,t_2}\mathbbm 1_{(\mathcal F_{t_1,t_2}\times\mathcal D_{t_2})}\right)^{\circ k}\right\|_{L^1(D^k)} \longrightarrow\left\|f_{\rho_1}^{\circ k}\,\right\|_{L^1(D_1^k)}\left\|f_{\rho_2^*}^{\circ k}\,\right\|_{L^1(D_2^k)}\,.
\end{equation*}
It remains to control the contribute on $(\mathcal F_{t_1,t_2}\times\mathcal D_{t_2})^c:=(D-D)\setminus(\mathcal F_{t_1,t_2}\times\mathcal D_{t_2})$. Decompose \begin{equation*}
    K_{t_1,t_2}
=
K_{t_1,t_2}\mathbbm 1_{(\mathcal F_{t_1,t_2}\times\mathcal D_{t_2})}
+
K_{t_1,t_2}\mathbbm 1_{(\mathcal F_{t_1,t_2}\times\mathcal D_{t_2})^c}.
\end{equation*}
Then, by positivity of $C$ (recall \Cref{ass:KEY}) and multilinaerity of the cyclic product, we have \begin{equation*}
    \left\|K_{t_1,t_2}^{\circ k}\right\|_{L^1(D^k)}=\left\|\left(K_{t_1,t_2}\mathbbm 1_{(\mathcal F_{t_1,t_2}\times\mathcal D_{t_2})}\right)^{\circ k}\right\|_{L^1(D^k)}+r_{t_1,t_2}\,,
\end{equation*}
where \begin{equation*}
    r_{t_1,t_2}\le k\,\left\|\left(K_{t_1,t_2}\mathbbm 1_{(\mathcal F_{t_1,t_2}\times\mathcal D_{t_2})^c}\right)\circ\left(K_{t_1,t_2}\right)^{\circ (k-1)}\right\|_{L^1(D^k)} \,\,.
\end{equation*}
Moreover, by \eqref{eq:locallyint cyclic} we get \begin{equation*}
    r_t\lesssim \left\|K_{t_1,t_2}\mathbbm 1_{(\mathcal F_{t_1,t_2}\times\mathcal D_{t_2})^c}\right\|_{L^1(D-D)}\,\left\|K_{t_1,t_2}\right\|_{L^2(D-D)}^{k-1}
\end{equation*}
Therefore, thanks to \eqref{eq: finite L2 norm}, it only remains to prove that \begin{equation*}
    \left\|K_{t_1,t_2}\mathbbm 1_{(\mathcal F_{t_1,t_2}\times\mathcal D_{t_2})^c}\right\|_{L^1(D-D)}\rightarrow 0\,.
\end{equation*}
To do this, we use triangular inequality and split $(\mathcal F_{t_1,t_2}\times\mathcal D_{t_2})^c$ into the three disjoint regions \begin{equation*}
    (\mathcal F_{t_1,t_2})^c\times \mathcal D_{t_2},\qquad
\mathcal F_{t_1,t_2}\times(\mathcal \mathcal D_{t_2})^c,\qquad
(\mathcal F_{t_1,t_2})^c\times(\mathcal \mathcal D_{t_2})^c\,.
\end{equation*}
Choosing $X^{(1)},X^{(2)}>0$ according to Proposition \ref{prop:potter}, observing that $(\mathcal{F}_{t_1,t_2})^c\subseteq (\mathcal{F}_{t_1,0})^c$ since $c_1,c_2$ are non-decreasing and using \eqref{eq:nice lower} in \eqref{second rem}, we get:
\begin{align}  
    \label{first rem}
&K_{t_1,t_2}\mathbbm{1}_{(\mathcal F_{t_1,t_2})^c\times\mathcal \mathcal D_{t_2}}\lesssim \frac{f_{\rho_2+\delta}(x_2)\mathbbm{1}_{(\mathcal F_{t_1,t_2})^c\times\mathcal \mathcal D_{t_2}}}{c_1(t_1\,c_2(t_2)^{1/d_1})}\,\,;
\\
&\label{second rem}
K_{t_1,t_2}\mathbbm{1}_{\mathcal F_{t_1,t_2}\times \mathcal D_{t_2}^c}\,\,\lesssim \,\,\frac{f_{\rho_1+\delta}(x_1)\mathbbm{1}_{\mathcal F_{t_1,0}\times \mathcal D_{t_2}^c}}{c_2(t_2)}\frac{c_1(t_1)}{c_1(t_1 c_2(t_2)^{1/d_1})}\,\,;
\\ 
&\label{third rem}
K_{t_1,t_2}\mathbbm{1}_{(\mathcal F_{t_1,t_2})^c\times \mathcal D_{t_2}^c}\lesssim\frac{\mathbbm{1}_{(\mathcal F_{t_1,t_2})^c\times \mathcal D_{t_2}^c}}{c_2(t_2)c_1(t_1\,c_2(t_2)^{1/d_1})} \,\,.
\end{align}
Thus, we conclude using \cref{prop:potter}, \eqref{eq:regvar 0 e infty}, \Cref{ass:KEY} and the assumptions on $\rho_1,\rho_2$:
\begin{align}  
    \left\|K_{t_1,t_2}\mathbbm{1}_{(\mathcal F_{t_1,t_2})^c\times\mathcal \mathcal D_{t_2}}\right\|_{L^1(D-D)}&\lesssim \frac{1}{t_1^{d_1}c_2(t_2)\,c_1(t_1\,c_2(t_2)^{1/d_1})}\lesssim \frac{1}{(t_1c_2(t_2)^{1/d_1})^{d_1-\rho_1-\delta}} \nonumber \\
& \longrightarrow 0\,\,; \label{first rem normed}
\\
\label{second rem normed}
\left\|K_{t_1,t_2}\mathbbm{1}_{\mathcal F_{t_1,t_2}\times \mathcal D_{t_2}^c}\right\|_{L^1(D-D)}\,\,&\lesssim \,\,\frac{1}{t_2^{d_2}c_2(t_2)}\frac{c_1(t_1)}{c_1(t_1 c_2(t_2)^{1/d_1})}\lesssim \frac{1}{t_2^{d_2-\rho_2(1-\rho_1/d_1)-\delta}} \longrightarrow 0\,\,;
\\ 
\left\|K_{t_1,t_2}\mathbbm{1}_{(\mathcal F_{t_1,t_2})^c\times \mathcal D_{t_2}^c}\right\|_{L^1(D-D)}& \lesssim\frac{1}{t_1^{d_1}t_2^{d_2}c_2(t_2)c_1(t_1\,c_2(t_2)^{1/d_1})}\lesssim \frac{1}{t_1^{d_1-\rho_1-\delta}t_2^{d_2-\rho_2(1-\rho_1/d_1)-\delta}} \nonumber \\
& \longrightarrow 0\,\,.\label{third rem normed}
\end{align}
\end{proof}

\paragraph{Acknowledgments}
N.L. was partially supported by the ARC Discovery Grant DP220101680 (Australia). The research of L.M. has been supported by the PRIN Project 2022 GRAFIA funded by MUR, by the Excellence Department Project MatMod@TOV and by the INdAM group GNAMPA. I.N. gratefully acknowledges support from the Luxembourg National Research Fund (Grant No. O22/17372844/FraMStA). 

\bibliographystyle{abbrv}
\bibliography{pDOM}

\end{document}

%% file: macros.tex
\usepackage[colorinlistoftodos]{todonotes}
\usepackage{amsmath}
\usepackage{amsfonts}
\usepackage{amssymb}
\usepackage{amsthm}
\usepackage{tikz}
\usetikzlibrary{patterns, patterns.meta}
\usepackage{bbm}
\usepackage{mathtools,thmtools}
\usepackage{nccmath}
\usepackage{mathabx}
\usepackage{graphicx}
\usepackage{enumitem}
\usepackage{xcolor}
\usepackage{pifont}
\allowdisplaybreaks[4]

\usepackage{framed}
\numberwithin{equation}{section}

\usepackage{hyperref}
\hypersetup{
	colorlinks=true,     
	linkcolor=blue,         
	citecolor=red}

\numberwithin{equation}{section}

\usepackage{cleveref}

\DeclareMathOperator{\Var}{\mathbb{V}ar}

\DeclareMathOperator{\vol}{Vol}
\DeclareMathOperator{\Cov}{\mathbb{C}ov}

\DeclareMathOperator{\diam}{diam}

\newcommand{\eqlaw}{\overset{d}{=}}
\newcommand{\convlaw}{\overset{d}{\longrightarrow}}
\newcommand{\eqLtwo}{\overset{L^2}{=}}

\newcommand{\R}{\mathbb{R}}
\newcommand{\N}{\mathbb{N}}
\newcommand{\E}{\mathbb{E}}

\newcommand{\Z}{\mathbb{Z}}

\newcommand{\f}{\varphi}
\renewcommand{\phi}{\varphi}

\newcommand{\scp}[1]{\langle #1 \rangle}

\newcommand{\freecontr}[1]{\overset{r}{\frown}}

\theoremstyle{plain}
\newtheorem{theorem}{Theorem}[section]
\newtheorem*{theorem*}{Theorem}
\newtheorem{proposition}[theorem]{Proposition}
\newtheorem*{proposition*}{Proposition}

\newtheorem{corollary}[theorem]{Corollary}

\theoremstyle{definition}
\newtheorem{definition}{Definition}[section]
\newtheorem*{definition*}{Definition}
\newtheorem{remark}[definition]{Remark}
\newtheorem*{remark*}{Remark}
\newtheorem{example}[definition]{Example}
\newtheorem{assumption}{Assumption}[section]

\crefname{equation}{Equation}{Equations}
\crefname{multline}{Equation}{Equations}
\crefname{figure}{Figure}{Figures}
\crefname{question}{Question}{Question}
\crefname{section}{Section}{Sections}
\crefname{subsection}{Subsection}{Subsections}
\crefname{lemma}{Lemma}{Lemmas}
\crefname{proposition}{Proposition}{Propositions}
\crefname{theorem}{Theorem}{Theorems}
\crefname{innercustomthm}{Theorem}{Theorems}
\crefname{mainthm}{Theorem}{Theorems}
\crefname{corollary}{Corollary}{Corollaries}
\crefname{definition}{Definition}{Definitions}
\crefname{remark}{Remark}{Remarks}
\crefname{proposition}{Proposition}{Proposition}
\crefname{corollary}{Corollary}{Corollaries}
\crefname{example}{Example}{Examples}
\crefname{claim}{Claim}{Claim}
\crefname{conjecture}{Conjecture}{Conjecture}
\crefname{enumi}{}{}
\crefname{ass}{Assumption}{Assumptions}

